\numberwithin{equation}{section}
\definecolor{mygreen}{RGB}{47,130,100}
\newcommand{\kibitz}[2]{\ifnum\Comments=1\textcolor{#1}{#2}\fi}
\newcommand{\revcomm}[1]{\ifnum\Comments=1\textcolor{red}{#1}\else #1\fi}
\newcommand{\revcommblock}[1]{\ifnum\Comments=1{\color{red}#1}\else #1\fi}
\def\N{\mathbb{N}}
\def\R{\mathbb{R}}
\def\C{\mathbb{C}}
\def\Gr{\mathrm{Gr}}
\def\St{\mathrm{St}}
\def\SO{\mathrm{SO}}
\def\so{\mathfrak{so}}
\def\O{\mathrm{O}}
\def\U{\mathrm{U}}
\def\GL{\mathrm{GL}}
\def\RP{\mathbb{RP}}
\def\Sym{\mathrm{Sym}}
\def\Vectorfields{\mathfrak{X}}
\def\D{\mathrm{d}}
\def\OS{\mathrm{OS}}
\def\SG{\mathrm{SG}}
\def\OG{\mathrm{OG}}
\newcommand{\mcM}{\mathcal{M}}
\newcommand{\Space}[1]{\mathcal{#1}}
\DeclareMathOperator{\Exp}{Exp}
\DeclareMathOperator{\Log}{Log}
\DeclareMathOperator{\expm}{exp_m}
\DeclareMathOperator*{\argmin}{arg\,min}
\DeclareMathOperator{\tr}{tr}
\DeclareMathOperator{\Span}{span}
\DeclareMathOperator{\rank}{rank}
\DeclareMathOperator{\diag}{diag}
\DeclareMathOperator{\Hor}{\mathsf{Hor}}
\newcommand{\hor}{\mathsf{hor}}
\DeclareMathOperator{\Ver}{\mathsf{Ver}}
\DeclareMathOperator{\dist}{dist}
\DeclareMathOperator{\grad}{grad}
\DeclareMathOperator{\Cut}{Cut}
\newcommand{\cut}{\mathrm{cut}}
\DeclareMathOperator{\TCL}{TCL}
\DeclareMathOperator{\ID}{ID}
\DeclareMathOperator{\Conj}{Conj}
\DeclareMathOperator{\inj}{inj}
\DeclareMathOperator{\eucl}{eucl}
\DeclareMathOperator{\id}{id}
\DeclareMathOperator{\colspan}{colspan}
\newcommand{\SP}[1]{\left\langle #1\right\rangle}
\DeclarePairedDelimiterX{\norm}[1]{\lVert}{\rVert}{#1}
\renewcommand*\env@matrix[1][*\c@MaxMatrixCols c]{%
  \hskip -\arraycolsep
  \let\@ifnextchar\new@ifnextchar
  \array{#1}}
\newtheorem{theorem}[algorithm]{Theorem}
\newtheorem{proposition}[algorithm]{Proposition}
\newtheorem*{remark}{Remark}
\title{A Grassmann Manifold Handbook: Basic Geometry and Computational Aspects}
\author{Thomas Bendokat}
\author{Ralf Zimmermann}
\affil{Department of Mathematics and Computer Science, University of Southern Denmark (SDU), Odense, Denmark (Current address \href{mailto:bendokat@mpi-magdeburg.mpg.de}{bendokat@mpi-magdeburg.mpg.de}, \href{mailto:zimmermann@imada.sdu.dk}{zimmermann@imada.sdu.dk})}
\author{P.-A. Absil}
\affil{ICTEAM Institute, UCLouvain, 1348 Louvain-la-Neuve, Belgium (\href{https://sites.uclouvain.be/absil/}{https://sites.uclouvain.be/absil/})}
\date{}
\begin{document}

\maketitle

\begin{abstract}
  The Grassmann manifold of linear subspaces is important for the mathematical modelling of a multitude of applications, ranging from problems in machine learning, computer vision and image processing to low-rank matrix optimization problems, dynamic low-rank decompositions and model reduction. With this \revcomm{mostly expository} work, we aim to provide a  collection of the essential facts and formulae on the geometry of the Grassmann manifold in a fashion that is fit for tackling the aforementioned problems with matrix-based algorithms.
 Moreover, we expose the Grassmann geometry both from the approach of representing subspaces with orthogonal projectors and when viewed as a quotient space of the orthogonal group, where subspaces are identified as equivalence classes of (orthogonal) bases. This bridges the associated research tracks and allows for an easy transition between these two approaches.
 
 Original contributions include a modified algorithm for computing the Riemannian logarithm map on the Grassmannian that is advantageous numerically but also allows for a more elementary, yet more complete description of the cut locus and the conjugate points. We also derive a formula for parallel transport along geodesics in the orthogonal projector perspective, formulae for the derivative of the exponential map, as well as a formula for Jacobi fields vanishing at one point. 
\\

\textbf{Keywords:} Grassmann manifold, Stiefel manifold, orthogonal group, Riemannian exponential, geodesic, Riemannian logarithm, cut locus, conjugate locus, curvature, parallel transport, quotient manifold, horizontal lift, subspace, singular value decomposition

\textbf{AMS subject classifications:}
  15-02, 15A16, 15A18, 15B10,  22E70, 51F25, 53C80, 53Z99
  
\end{abstract}

\section*{Notation}
\begin{small}
\renewcommand{\arraystretch}{1.5}
\begin{center}
\rowcolors{2}{gray!15}{white}
\begin{longtable}{c|c|p{70mm}}
 \rowcolor{gray!25}
 \textbf{Symbol} & \textbf{Matrix Definition} & \textbf{Name} \\
 \hline
 $I_p$ & $\diag(1, \dots, 1)\in \R^{p \times p}$  & Identity matrix\\
 $I_{n,p}$ & $\begin{psmallmatrix} I_p \\ 0\end{psmallmatrix}\in \R^{n \times p}$ & \\
 $\Sym_n$ & $\{X \in \R^{n \times n}\mid X=X^T \}$ & Space of symmetric matrices\\
 $\O(n)$ & $\left\lbrace Q \in \R^{n \times n} \ \middle|\ Q^TQ=I_n=QQ^T\right\rbrace$ & Orthogonal group \\
 $T_Q\O(n)$ & $\{ Q\Omega \in \R^{n \times n} \mid \Omega^T=-\Omega\}$ & Tangent space of $\O(n)$ at $Q$\\
 $\St(n,p)$ & $\{U \in \R^{n \times p} \mid U^TU=I_p\}$ & Stiefel manifold\\
 $T_U\St(n,p)$ & $\{ D \in \R^{n \times p} \mid U^TD=-D^T U\}$ & Tangent space of $\St(n,p)$ at $U$\\
 $\Gr(n,p)$ & $\{ P \in \Sym_n \mid P^2 = P,\ \rank(P)=p\}$ & Grassmann manifold\\
 $T_P\Gr(n,p)$ & $\{ \Delta \in \Sym_n \mid \Delta=P\Delta + \Delta P\}$ & Tangent space of $\Gr(n,p)$ at $P$\\
 $U_\perp$ & $(U\ U_\perp) \in \O(n)$ & Orthogonal completion of $U \in \St(n,p)$\\
 $g^\St_U(D_1,D_2)$ & $\tr(D_1^T(I_n - \frac{1}{2}UU^T)D_2)$ & (Quotient) metric in $T_U\St(n,p)$\\
 $g^\Gr_P(\Delta_1,\Delta_2)$ & $\frac{1}{2}\tr(\Delta_1^T\Delta_2)$ & Riemannian metric in $T_P\Gr(n,p)$\\
 $\pi^\OS(Q)$ & $QI_{n,p}$ & Projection from $\O(n)$ to $\St(n,p)$\\
 $\pi^\SG(U)$ & $UU^T$ & Projection from $\St(n,p)$ to $\Gr(n,p)$\\
 $\pi^\OG(Q)$ & $Q I_{n,p}I_{n,p}^TQ^T$ & Projection from $\O(n)$ to $\Gr(n,p)$\\
 $\Ver_U\St(n,p)$ & $\{ UA \in \R^{n \times p} \mid A^T = -A \in \R^{p \times p} \}$ & Vertical space w.r.t. $\pi^\SG$\\
 $\Hor_U\St(n,p)$ & $\{U_\perp B \in \R^{n \times p} \mid B \in \R^{(n-p) \times p}\}$ & Horizontal space w.r.t. $\pi^\SG$\\
 $\Delta^\hor_U$ & $\Delta U \in \R^{n \times p}$ & Horizontal lift of $\Delta \in T_P\Gr(n,p)$ to $\Hor_U\St(n,p)$\\
 $[U]$ & $\{UR \in \St(n,p) \mid R \in \O(p) \}$ & Equivalence class representing a point in $\Gr(n,p)$\\
 $\Exp^\Gr_P(t\Delta)$ & $\pi^\SG(UV\cos(t\Sigma) + \hat{Q}\sin(t\Sigma))$ & Riemannian exponential for $\Delta^\hor_U \overset{\text{\tiny SVD}}{=} \hat{Q}\Sigma V^T \in \Hor_U\St(n,p)$\\
 $\Log^\Gr_P(F)$ & $\Delta \in T_P\Gr(n,p)$ s.t. $\Exp^\Gr_P(\Delta)=F$ & Riemannian logarithm in $\Gr(n,p)$\\
 $K_P(\Delta_1,\Delta_2)$ & $4\frac{\tr\mathopen{}\left(\Delta_1^2\Delta_2^2\right) - \tr\mathopen{}\left((\Delta_1\Delta_2)^2\right)}{\tr(\Delta_1^2)\tr(\Delta_2^2)-(\tr(\Delta_1\Delta_2))^2}$ & Sectional curvature of $\Gr(n,p)$
\end{longtable}
\end{center}
\end{small}

\section{Introduction}
\label{sec:intro}
The collection of all linear subspaces of fixed dimension $p$ of the Euclidean space $\R^{n}$ forms the Grassmann manifold $\Gr(n,p)$, also termed the Grassmannian.
Subspaces and thus Grassmann manifolds play an important role in a large variety of applications.
These include, but are not limited to, data analysis and signal processing \cite{Gallivan_etal2003, Rahman_etal2005, Rentmeesters2013}, 
subspace estimation and subspace tracking \cite{boumal2015rtrmcextended,Balzano2015, ZhangBalzano2016},
structured matrix optimization problems \cite{EdelmanAriasSmith1999,AbsilMahonySepulchre2004, AbsilMahonySepulchre2008}, dynamic low-rank decompositions
\cite{Hairer06gni,KochLubich2007}, projection-based parametric model reduction \cite{AmsallemFarhat2008,Nguyen2012,Zimmermann2014,
SonStykel2015,ZimmermannPeherstorferWillcox_2017} and computer vision \cite{Lui2012}, see also the collections \cite{Minh:2016:AAR:3029338, RiemannInComputerVision}.
Moreover, Grassmannians are extensively studied for their purely mathematical aspects \cite{Leichtweiss1961, Wong1967, Wong1968, Wong1968b, Sakai1977, MachadoSalavessa1985, Kozlov} and often serve as illustrating examples in the differential geometry literature
\cite{KobayashiNomizu1996, helgason2001differential}.

In this work, we approach the Grassmannian from a matrix-analytic perspective.
The focus is on the computational aspects as well as on geometric concepts that directly or indirectly feature in
matrix-based algorithmic applications.
The most prominent approaches of representing points on Grassmann manifolds with matrices in computational algorithms are 
\begin{itemize}
     \item {\em the basis perspective}: A subspace $\Space{U}\in \Gr(n,p)$ is identified with a (non-unique) matrix $U\in \R^{n\times p}$ whose columns form a basis of $\Space{U}$. In this way, a subspace is identified with the equivalence class of all rank-$p$ matrices whose columns span $\Space{U}$. For an overview of this approach, see for example the survey \cite{AbsilMahonySepulchre2004}. A brief introduction is given in \cite{HelmkeMoore1994}.
     \item {\em the ONB perspective}: In analogy to the basis perspective above, a subspace $\Space{U}$ may be identified with the equivalence class of matrices whose columns form an orthonormal basis (ONB) of $\Space{U}$. This is often advantageous in numerical computations. This approach is surveyed in \cite{EdelmanAriasSmith1999}.
    \item {\em the projector perspective}: A subspace $\Space{U}\in \Gr(n,p)$ is identified with the (unique) orthogonal projector $P\in\R^{n\times n}$ onto $\Space{U}$, which in turn is uniquely represented by $P=UU^T$, with $U$ from the ONB perspective above. For an approach without explicit matrices see \cite{MachadoSalavessa1985}, and for the approach with matrices see for example \cite{HelmkeHueperTrumpf2007, BatziesHueperMachadoLeite2015, HelmkeMoore1994}.
    \item {\em the Lie group perspective}: A subspace $\Space{U} \in \Gr(n,p)$ is identified with an equivalence class of orthogonal $n \times n$ matrices. This perspective is for example taken in \cite{Gallivan_etal2003,Srivastavaetal2005}.
\end{itemize}
These approaches are closely related and all of them rely on Lie group theory to some extent. 
Yet, the research literature on the basis/ONB perspective and the projector perspective is rather disjoint. The recent preprint \cite{lai2020} proposes yet another perspective, namely representing $p$-dimensional subspaces as symmetric orthogonal matrices of trace $2p-n$. This approach corresponds to a scaling and translation of the projector matrices in the vector space of symmetric matrices, hence it yields very similar formulae.

\revcomm{
    There are at least two other important perspectives on the Grassmann manifold, which are however not treated further in this work, as they are mainly connected to the field of algebraic geometry. The first are \emph{Plücker embeddings}, where $\Gr(n,p)$ is embedded into the projective space $\mathbb{P}^{\binom{n}{p}-1}$, which is done by representing every point, i.e., subspace, in $\Gr(n,p)$ by the determinants of all $p \times p$ submatrices of a matrix spanning that subspace. The second perspective are \emph{Schubert varieties}, where the Grassmannian is partitioned into so called \emph{Schubert cells}. For details on both of those approaches, see for example \cite{Gillespie2019} and several references therein.}

\revcomm{
    The Grassmann manifold can also be defined for the complex case, which features less often in applications, as far as the authors are aware. Here, complex $p$-dimensional subspaces of $\C^n$ are studied. Most of the formulas in this handbook can be transferred to the complex case with analogous derivations, by replacing the orthogonal group $\O(n)$ with the unitary group $\U(n)$, and the transpose with the conjugate transpose. For a study of complex Grasmannians, see for example \cite[Section 5]{MachadoSalavessa1985} and \cite{Berceanu1997}.
}

\paragraph{Raison d'\^etre and original contributions}
We treat the Lie group approach, the ONB perspective and the projector perspective simultaneously.
This may serve as a bridge between the corresponding research tracks.
Moreover, we collect the essential facts and concepts that feature as generic tools and building blocks in Riemannian computing problems on the Grassmann manifold 
in terms of matrix formulae, fit for algorithmic calculations.
This includes, among others, the Grassmannian's quotient space structure (Subsection~\ref{subsec:QuotientStructure}), the Riemannian metric (Subsection~\ref{subsec:RiemannianMetric}) and distance, the Riemannian connection (Subsection~\ref{subsec:RiemmanianConnection}), the Riemannian exponential (Subsection~\ref{subsec:ExponentialMap}) and its inverse, the Riemannian logarithm (Subsection~\ref{subsec:RiemannianLogarithm}), as well as the associated Riemannian normal coordinates (Section~\ref{sec:Parameterizations}), parallel transport of tangent vectors (Subsection~\ref{subsec:ParallelTransport}) and the sectional curvature (Subsection~\ref{subsec:SectionalCurvature}).
Wherever possible, we provide self-contained and elementary derivations of the sometimes classical results. Here, the term elementary is to be understood as ``via tools from linear algebra and matrix analysis'' rather than ``via tools from abstract differential geometry''. Care has been taken that the quantities that are most relevant for algorithmic applications are stated in a form that allows calculations that scale in $\mathcal{O}(np^2)$.

As novel research results, we provide a modified algorithm (Algorithm~\ref{alg:modgrasslog}) for computing the Riemannian logarithm map on the Grassmannian that has favorable numerical features and 
additionally allows to (non-uniquely) map points from the cut locus of a point to its tangent space. Therefore any set of points on the Grassmannian can be mapped to a single tangent space (Theorem~\ref{thm:GrLog} and Theorem~\ref{thm:multiple_shortest_geodesics}). In particular, we give explicit formulae for the (possibly multiple) shortest curves between any two points on the Grassmannian as well as the corresponding tangent vectors. Furthermore, we present a more elementary, yet more complete description of the conjugate locus of a point on the Grassmannian, which is derived in terms of principal angles between subspaces (Theorem~\ref{thm:conjugatelocus}).
We also derive a formula for parallel transport along geodesics in the orthogonal projector perspective (Proposition~\ref{prop:parallel_transport_projector}), formulae for the derivative of the exponential map (Subsection~\ref{sec:DifferentialOfExp}), as well as a formula for Jacobi fields vanishing at one point (Proposition~\ref{prop:Jacobifields}).

\paragraph{Organization}
Section~\ref{sec:GrassQuotintro} introduces the manifold structure of the Grassmann manifold and provides basic formulae for representing Grassmann points and tangent vectors via matrices. Section~\ref{sec:RiemannStruct} recaps the essential Riemann-geometric aspects of the Grassmann manifold, including the Riemannian exponential, its derivative and parallel transport. In Section~\ref{sec:symcurve}, the Grassmannian's symmetric space structure is established by elementary means and used to explore the sectional curvature and its bounds. In Section~\ref{sec:CutLocusLogarithm}, the (tangent) cut locus is described and a new algorithm is proposed to calculate the pre-image of the exponential map, i.e. the Riemannian logarithm where the pre-image is unique. Section~\ref{sec:Parameterizations} addresses normal coordinates and local parameterizations for the Grassmannian. In Section~\ref{sec:conjugatepoints}, questions on Jacobi fields and the conjugate locus of a point are considered. Section~\ref{sec:Conclusion} concludes the paper.

\section{The Manifold Structure of the Grassmann Manifold}
\label{sec:GrassQuotintro}
    In this section, we recap the definition of the Grassmann manifold and connect results from \cite{EdelmanAriasSmith1999,BatziesHueperMachadoLeite2015,MachadoSalavessa1985,HelmkeHueperTrumpf2007}. Tools from Lie group theory establish the quotient space structure of the Grassmannian, which
    gives rise to efficient representations. The required Lie group background can be found in the appendix and in~\cite{Hall_Lie2015}, \cite[\revcomm{Chapters 7 \& 21}]{Lee2012smooth}.
    
    The \emph{Grassmann manifold} (also called \emph{Grassmannian}) is defined as the set of all $p$-dimensional subspaces of the Euclidean space $\R^n$. 
    This set can be identified with the set of orthogonal rank-$p$  projectors,
    \begin{equation}
    \label{eq:GrassmProj}
        \Gr(n,p) := \left\lbrace P \in \R^{n \times n}\ \middle|\ P^T=P,\ P^2=P,\ \rank{P}=p \right\rbrace,
    \end{equation}
    as is for example done in \cite{HelmkeHueperTrumpf2007,BatziesHueperMachadoLeite2015}. Note that a projector $P$ is symmetric as a matrix (namely, $P^T=P$) if and only if it is orthogonal as a projection operation (its range and null space are mutually orthogonal)~\cite[\S 3]{Eigenvalue_Problems_Saad}. The identification in~\eqref{eq:GrassmProj} associates $P$ with the subspace $\Space{U}:=\mathrm{range}(P)$. Every $P \in \Gr(n,p)$ can in turn be identified with an equivalence class of orthonormal basis matrices spanning the same subspace; an approach that is for example chosen in \cite{EdelmanAriasSmith1999}. These ONB matrices are elements of the so called \emph{Stiefel manifold}
    \begin{equation*}
        \St(n,p):=\left\lbrace U \in \R^{n \times p} \ \middle|\ U^TU=I_p \right\rbrace.
    \end{equation*}
    The link between these two sets is via the projection
    \begin{equation*}
        \pi^\SG\colon \St(n,p) \to \Gr(n,p),\ U \mapsto UU^T.
    \end{equation*}
    
    To obtain a manifold structure on $\Gr(n,p)$ and $\St(n,p)$, \revcomm{i.e., endow those sets with coordinate patches that overlap smoothly,} we recognize these matrix sets
    as quotients of the \emph{orthogonal group}
    \begin{equation}
    \label{eq:On}
        \O(n):=\left\lbrace Q \in \R^{n \times n} \ \middle|\ Q^TQ=I_n=QQ^T\right\rbrace,
    \end{equation}
    which is a compact Lie group\revcomm{, i.e., a group that also is a compact manifold, for which the multiplication and inversion operation are smooth maps, respectively. Quotients of Lie groups identify sets of group elements as equivalent. It is a standard construction that quotients of Lie groups are themselves manifolds under certain assumptions, c.f. \cite[Chapter 21]{Lee2012smooth}, so called \emph{homogeneous spaces}. When the Lie group is compact, many constructions for homogeneous spaces simplify, as this guaranteesthe existence of a \emph{bi-invariant Riemannian metric} on the Lie group \cite[Corollary 3.15]{Lee2018riemannian}, associating the (manifold) curvature of the Lie group with its algebraic structure.}
    For a brief introduction to Lie groups and their actions, see Appendix~\ref{app:LieGroups}. The link from $\O(n)$ to $\St(n,p)$ and $\Gr(n,p)$ is given by the projections
    \begin{equation*}
        \pi^\OS\colon \O(n) \to \St(n,p),\ Q \mapsto Q(:,1:p),
    \end{equation*}
    where $Q(:,1:p)$ is the matrix formed by the $p$ first columns of $Q$, and 
    \begin{equation*}
        \pi^\OG:=\pi^\SG \circ \pi^\OS\colon \O(n) \to \Gr(n,p),\ Q \mapsto Q(:,1:p)Q(:,1:p)^T,
    \end{equation*}
    respectively. We can consider the following hierarchy of quotient structures:
    \begin{quote}
        Two square orthogonal matrices $Q,\tilde{Q}\in \O(n)$ determine the same rectangular, column-orthonormal matrix $U\in \R^{n\times p}$, if both $Q$ and $\tilde{Q}$ feature $U$ as their first $p$ columns.
        Two column-orthonormal matrices $U,\tilde{U}\in \R^{n\times p}$ determine the same subspace, if they differ by an orthogonal coordinate change.
    \end{quote}
    This hierarchy is visualized in Figure \ref{fig:Quotient}. In anticipation of the upcoming discussion, the figure already indicates the lifting of tangent vectors according to the quotient hierarchy.
    \begin{figure}[ht]
    \centering
        \includegraphics{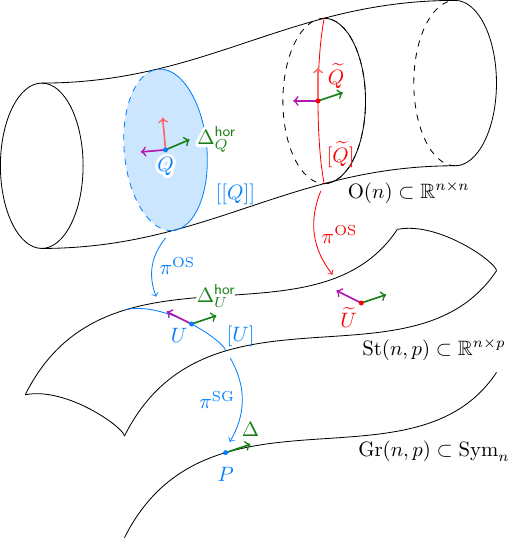}
        \caption{Conceptual visualization of the quotient structure of the Grassmann manifold. The double brackets $[[\cdot]]$ denote an equivalence class with respect to $\pi^\OG=\pi^\SG \circ \pi^\OS$, while the single brackets $[\cdot]$ denote an equivalence class with respect $\pi^\OS$ or $\pi^\SG$, depending on the element inside the brackets. The tangent vectors along an equivalence class for a projection are vertical with respect to that projection, while the directions orthogonal to the vertical space are horizontal. Correspondingly, the horizontal lift of a tangent vector $\Delta \in T_P\Gr(n,p)$ to $T_U\St(n,p)$ or $T_Q\O(n)$ is orthogonal to all vertical tangent vectors at that point. With respect to the projection $\pi^\SG$ from the Stiefel to the Grassmann manifold, the green tangent vector $\Delta^\hor_U$ is horizontal and the magenta tangent vector (along the equivalence class) is vertical. On the other hand, the magenta tangent vectors in $\O(n)$ (pointing to the left) are horizontal with respect to $\pi^\OS$ but vertical with respect to $\pi^\OG$.
        }
         \label{fig:Quotient}
    \end{figure}

\subsection{The Embedded Manifold Structure of the Grassmannian}
    In order to obtain a smooth manifold structure on the set of orthogonal projectors $\Gr(n,p)$, we can advance as in \cite[Proposition 2.1.1]{HelmkeMoore1994}. Define an isometric group action of the orthogonal group $\O(n)$ on the symmetric $n \times n$ matrices $\Sym_n$ by
    \begin{equation*}
        \Phi\colon \O(n)\times \Sym_n \to \Sym_n,\ (Q,S) \mapsto QSQ^T.
    \end{equation*}
    Introduce
    \begin{equation*}
        P_0 := \begin{pmatrix}
            I_p & 0\\
            0   & 0
            \end{pmatrix} \in \Gr(n,p),
    \end{equation*}
    which is the matrix representation of the canonical projection onto the first $p$ coordinates with respect to the Cartesian standard basis.
    The set of orthogonal projectors $\Gr(n,p)$ is the orbit $\Phi(\O(n), P_0)$ of the element $P_0$ under the group action $\Phi$: Any matrix $QP_0Q^T$ obviously satisfies the defining properties of $\Gr(n,p)$ as stated in \eqref{eq:GrassmProj}.
    Conversely, if $P\in \Gr(n,p)$, then $P$ is real, symmetric and positive semidefinite with $p$ eigenvalues equal to one and $n-p$ eigenvalues equal to zero. 
    Hence, the eigenvalue decomposition (EVD) $P=Q\Lambda Q^T = QP_0Q^T$ establishes $P$ as a point in the orbit of $P_0$. In other words, we have confirmed that 
    \begin{equation}
    \label{eq:projectionOGr}
        \pi^\OG= \Phi\vert_{O(n)\times \{P_0\}} \colon \O(n) \to \Gr(n,p),\ Q \mapsto QP_0Q^T,
    \end{equation}
    maps into $\Gr(n,p)$ and is surjective. Since $\O(n)$ is compact, the first part of Proposition~\ref{prop:orbitmanifold} in the appendix shows that $\Gr(n,p) = \Phi(\O(n),P_0)$ is an embedded submanifold of $\Sym_n$. 
    
    \revcomm{This construction also shows that the Grassmannian is connected and even \emph{path-connected}, i.e., between any two points $P_1$, $P_2 \in \Gr(n,p)$, there is a path in $\Gr(n,p)$ joining the two locations: Let $P_1=Q_1P_0Q_1^T$ and $P_2=Q_2P_0Q_2^T$ be the EVDs of $P_1$ and $P_2$. If $Q_1$ or $Q_2$ have determinant $-1$, multiply it from the right with the diagonal matrix $\diag(1,\dots,1,-1)$, which does not change the EVD. As the special orthogonal group $\SO(n) =\{Q\in \O(n)\mid \det Q = 1\}$ is path connected, there is a path between $P_1$ and $P_2$ in $\Gr(n,p)$.}

\subsection{The Quotient Structure of the Grassmannian}
\label{subsec:QuotientStructure}
    To formally introduce the quotient structure of the Grassmannian, we make use of the second part of Proposition~\ref{prop:orbitmanifold}. The objects of interest are the orthogonal group $\O(n)$, which is the domain of $\pi^\OS$ and $\pi^\OG$, and the Cartesian product $\O(p)\times \O(n-p)$, which can be identified with a subgroup of $\O(n)$.
    
    The stabilizer of $\Phi$ at $P_0$, i.e., \revcomm{the set of matrices $Q \in \O(n)$ leaving $P_0$ invariant under} $\pi^\OG$, is given by $H=\{\begin{psmallmatrix}R_1 & 0\\ 0 & R_2 \end{psmallmatrix} \in \O(n) \mid R_1 \in \O(p),\ R_2 \in \O(n-p)\}\cong \O(p) \times \O(n-p)$. This is readily seen by noticing that $Q\in\O(n)$ fulfills  $\pi^\OG(Q) = Q P_0 Q^T = P_0$ if and only if $Q=\begin{psmallmatrix}R_1 & 0\\ 0 & R_2 \end{psmallmatrix}$. An equivalence relation on $\O(n)$ is defined by $\widetilde{Q} \sim Q$ if and only if $\pi^\OG(\widetilde{Q})=\pi^\OG(Q)$. This equivalence relation collects all orthogonal matrices whose first $p$ columns span the same subspace into an equivalence class. In other words, the equivalence classes of $\O(n)/H$ are
    \begin{equation}
    \begin{split}
     \label{eq:doublebracket}
     [[Q]]&=(\pi^\OG)^{-1}(\pi^\OG(Q))\\
     &=\left\lbrace \widetilde{Q} \in \O(n) \ \middle|\ \widetilde{Q}=Q\begin{pmatrix}R_1 & 0\\ 0 & R_2 \end{pmatrix},\ \begin{pmatrix}R_1 & 0\\ 0 & R_2 \end{pmatrix} \in H  \right\rbrace,
    \end{split}
    \end{equation} 
    which corresponds to \cite[Eq. (2.28)]{EdelmanAriasSmith1999}. The manifold structure on $\O(n)/H \cong \O(n)/(\O(p)\times\O(n-p))$ is by definition the unique one that makes the quotient map
    \begin{equation*}
        \O(n) \to \O(n)/(\O(p)\times \O(n-p))\colon Q \mapsto [[Q]]
    \end{equation*} 
    a smooth submersion, i.e., a smooth map with surjective differential at every point. The second part of Proposition~\ref{prop:orbitmanifold} shows that, as $\Gr(n,p)$ is the orbit of $P_0$ under $\Phi$, it holds that 
    \begin{equation*}
        \Gr(n,p) \cong \O(n)/(\O(p) \times \O(n-p)).
    \end{equation*}
    Therefore $\pi^\OG$ is also a smooth submersion.
    Furthermore, we have the well known result
    \begin{equation*}
        \dim(\Gr(n,p))= \dim(\O(n))-\dim(\O(p) \times \O(n-p))=(n-p)p.
    \end{equation*}

\subsection{The Tangent Spaces of the Grassmannian}
    The quotient structure of the Grassmannian allows to split every tangent space of $\O(n)$ into a vertical and (after choosing a Riemannian metric) horizontal part, and to identify every tangent space of $\Gr(n,p)$ with such a horizontal space as in \cite{EdelmanAriasSmith1999}.
    
    As the Lie algebra of $\O(n)$ is the set of skew-symmetric matrices
    \begin{equation*}
        \so(n):=T_I\O(n) = \left\lbrace \Omega \in \R^{n \times n} \ \middle|\ \Omega^T=-\Omega \right\rbrace,
    \end{equation*} 
    the tangent space at an arbitrary $Q \in \O(n)$ is given by the left translates
    \begin{equation*}
        T_Q\O(n)=\left\lbrace Q\Omega \ \middle|\ \Omega \in \so(n) \right\rbrace.
    \end{equation*}
    Restricting the Euclidean matrix space metric $\langle A, B\rangle_0 =\tr(A^TB)$ 
    to the tangent spaces turns the manifold $\O(n)$ into a Riemannian manifold. We include a factor of $\frac{1}{2}$ to obtain Riemannian metrics on the Stiefel and Grassmann manifold, in Subsections \ref{subsec:horizontalliftStiefel} and \ref{subsec:RiemannianMetric}, respectively, that comply with common conventions. This yields the Riemannian metric (termed here \emph{metric} for short)  $g_Q^\O\colon T_Q\O(n) \times T_Q\O(n) \to \R$,
    \begin{equation*}
        g_Q^\O(Q\Omega,Q\widetilde{\Omega}):=\SP{Q\Omega,Q\widetilde{\Omega}}_Q:= \frac{1}{2}\tr\left( (Q\Omega)^TQ\widetilde{\Omega} \right)=\frac{1}{2}\tr\left( \Omega^T\widetilde{\Omega} \right).
    \end{equation*}
    The differential of the projection $\pi^\OG$ at $Q \in \O(n)$ is a linear map $\D\pi^\OG_Q\colon T_Q\O(n) \to T_{\pi^\OG(Q)}\Gr(n,p)$,
    where $T_Q\O(n)$ and $T_{\pi^\OG(Q)}\Gr(n,p)$ are the tangent spaces of $\O(n)$ and $\Gr(n,p)$ at $Q$ and $\pi^\OG(Q)$, respectively.
    The directional derivative of $\pi^\OG$ at $Q \in \O(n)$ in the tangent direction $Q\Omega=Q\begin{psmallmatrix}A & \smash{-B^T}\\ B & C\end{psmallmatrix} \in T_Q\O(n)$ is given by
    \begin{equation}
    \label{eq:dPi}
        \D \pi^\OG_Q(Q\Omega)=\frac{\D}{\D t}\Big\vert_{t=0}(\pi^\OG(\gamma(t))) =
        \frac{\D}{\D t}\Big\vert_{t=0}(\gamma(t)P_0\gamma(t)^T) = Q\begin{pmatrix}0 & B^T\\ B & 0\end{pmatrix}Q^T,
    \end{equation}
    where $\gamma\colon t\mapsto \gamma(t)\in \O(n)$ is an arbitrary differentiable curve with $\gamma(0)=Q$, $\dot \gamma(0)=Q\Omega$. Since $\pi^\OG$ is a submersion, this spans the entire tangent space, i.e.,
    \begin{equation*}
        T_{\pi^\OG(Q)}\Gr(n,p)=\left\lbrace Q\begin{pmatrix} 0 & B^T\\ B & 0\end{pmatrix}Q^T \ \middle|\ B \in \R^{(n-p)\times p} \right\rbrace.
    \end{equation*}
    
    In combination with the metric $g^\O_Q$, the smooth submersion $\pi^\OG$ allows to decompose every tangent space $T_Q\O(n)$ into a vertical and horizontal part, c.f. \cite[Chapter 2]{Lee2018riemannian}. The vertical part is the kernel of the differential $\D\pi^\OG_Q$, and the horizontal part is the orthogonal complement with respect to the metric $g^\O_Q$. We therefore have
    \begin{equation*}
        T_Q\O(n) = \Ver^{\pi^\OG}_Q \O(n) \oplus \Hor^{\pi^\OG}_Q \O(n),
    \end{equation*}
    where
    \begin{equation*}
        \Ver^{\pi^\OG}_Q \O(n) = \left\lbrace Q\begin{pmatrix} A & 0\\ 0 & C\end{pmatrix} \ \middle|\ A \in \so(p),\ C \in \so(n-p) \right\rbrace
    \end{equation*}
    and
    \begin{equation}
    \label{eq:HorQGr}
        \Hor^{\pi^\OG}_Q \O(n) = \left\lbrace Q\begin{pmatrix} 0 & -B^T\\ B & 0\end{pmatrix} \ \middle|\ B \in \R^{(n-p)\times p} \right\rbrace,
    \end{equation}
    c.f. \cite[Eq. (2.29) and (2.30)]{EdelmanAriasSmith1999}. 
    The tangent space of the Grassmann manifold at $P=\pi^\OG(Q)$ can be identified with the horizontal space at any representative $Q \in (\pi^\OG)^{-1}(P)\subset \O(n)$,
    \begin{equation*}
        T_P\Gr(n,p)\cong \Hor^{\pi^\OG}_Q\O(n).
    \end{equation*} 
    
    In \cite{BatziesHueperMachadoLeite2015}, the tangent space $T_P\Gr(n,p)$ is given by matrices of the form $[\Omega,P]$, where $[\cdot,\cdot]$ denotes the matrix commutator, and $\Omega \in \so_P(n)$ fulfilling
    \begin{equation}
        \label{eq:soPn}
         \so_P(n):= \left\lbrace \Omega \in \so(n) \ \middle|\ \Omega=\Omega P+P\Omega \right\rbrace.
    \end{equation}
    Writing $P=Q P_0 Q^T$ and making use of \eqref{eq:dPi} shows that every $\Delta \in T_P\Gr(n,p)$ is of the form
    \begin{equation}
    \label{eq:GrassmannTangentvectorB}
        \Delta = Q\begin{pmatrix}0 & B^T\\ B & 0\end{pmatrix}Q^T = \revcomm{\Bigg[}Q\begin{pmatrix}0 & -B^T \\ B & 0 \end{pmatrix}Q^T,P\revcomm{\Bigg]}.
    \end{equation}
    Since $\Omega \in \so_P(n)$ is equivalent to $Q^T\Omega Q \in \so_{Q^TPQ}(n)$ and $Q^TPQ = P_0$ it follows that every $\Omega \in \so_P(n)$ is of the form
    \begin{equation*}
        \Omega= Q\begin{pmatrix}0 & -B^T \\ B & 0 \end{pmatrix}Q^T.
    \end{equation*}
    Note that for $\Delta \in T_P\Gr(n,p)$, there is $\Omega \in \so_P(n)$ such that $\Delta = [\Omega,P] $. This $\Omega$ can be calculated via $\Omega = [\Delta,P] \in \so_P(n)$.    
    
    \begin{proposition}[Tangent vector characterization]
    \label{prop:TangentVectorCharacterizations}
        Let $P \in \Gr(n,p)$ be the orthogonal projector onto the subspace $\Space{U}$. For every symmetric $\Delta = \Delta^T \in \R^{n \times n}$, the following conditions are equivalent:
        \begin{enumerate}[label=\alph*)]
            \item \label{prop:TangentVectorChar_a}$\Delta \in T_P \Gr(n,p)$,
            \item \label{prop:TangentVectorChar_b} $\Delta(\Space{U}) \subset \Space{U}^\perp$ and $\Delta(\Space{U}^\perp)\subset \Space{U}$,
            \item \label{eq:DP+PD=D}$\Delta P + P \Delta = \Delta$,
            \item \label{prop:TangentVectorChar_d} $\Delta = [\Omega,P]$, where $\Omega := [\Delta,P] \in \so_P(n)$.
        \end{enumerate}
        Here, $\Delta(\Space{U}) :=\{\Delta x \in \R^n \mid x \in \Space{U}\}$ and the orthogonal complement $\Space{U}^\perp$ is taken with respect to the Euclidean metric in $\R^{n}$.
        \begin{proof}
          The equivalence of \ref{prop:TangentVectorChar_a}, \ref{prop:TangentVectorChar_b} and \ref{eq:DP+PD=D} is from \cite[Result 3.7]{MachadoSalavessa1985}. To show \ref{eq:DP+PD=D} implies \ref{prop:TangentVectorChar_d}, note that $\Delta P + P\Delta = \Delta$ implies $P\Delta P=0$ and therefore $[[\Delta,P],P]=\Delta P + P \Delta - 2 P \Delta P = \Delta$. On the other hand, if \ref{prop:TangentVectorChar_d} holds then $\Delta = \Delta P +P\Delta -2P \Delta P$, which also implies $P \Delta P = 0$ by multiplication with $P$ from one side. Inserting $P \Delta P = 0$ into the equation shows that \ref{eq:DP+PD=D} holds. The statement that $\Omega \in \so_P(n)$ is automatically true.
        \end{proof}

    \end{proposition}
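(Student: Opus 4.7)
The plan is to prove the four equivalences by showing (a) $\Leftrightarrow$ (c), (b) $\Leftrightarrow$ (c), and (c) $\Leftrightarrow$ (d), so that the result of \cite{MachadoSalavessa1985} on (a)--(c) is recovered en passant with a self-contained block-matrix derivation. I would work throughout in the orthogonal decomposition $\R^n = \Space{U}\oplus \Space{U}^\perp$: fix any $Q\in \O(n)$ whose first $p$ columns span $\Space{U}$, so that $P = QP_0Q^T$, and write an arbitrary symmetric $\Delta$ in block form as $\Delta = Q\begin{psmallmatrix}\Delta_{11} & \Delta_{12}\\ \Delta_{12}^T & \Delta_{22}\end{psmallmatrix}Q^T$ with $\Delta_{11}\in\Sym_p$, $\Delta_{22}\in\Sym_{n-p}$, $\Delta_{12}\in\R^{p\times(n-p)}$.

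For (a) $\Leftrightarrow$ (c), I would read off from~\eqref{eq:GrassmannTangentvectorB} that $\Delta\in T_P\Gr(n,p)$ amounts precisely to $\Delta_{11}=0=\Delta_{22}$, and a direct block computation gives $\Delta P + P\Delta - \Delta = Q\begin{psmallmatrix}\Delta_{11} & 0\\ 0 & -\Delta_{22}\end{psmallmatrix}Q^T$, so (c) is the same condition. The same block form makes (b) $\Leftrightarrow$ (c) immediate: $\Delta(\Space{U}) \subset \Space{U}^\perp$ is equivalent to $\Delta_{11} = 0$, while $\Delta(\Space{U}^\perp) \subset \Space{U}$ is equivalent to $\Delta_{22} = 0$.

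The remaining equivalence (c) $\Leftrightarrow$ (d) is a short commutator manipulation that relies only on $P^2 = P$. Assuming (c), left-multiplication of $\Delta P + P\Delta = \Delta$ by $P$ yields $P\Delta P = 0$; expanding $[[\Delta,P],P] = \Delta P - 2P\Delta P + P\Delta$ then collapses to $\Delta$, which is (d). For the converse, (d) expands to $\Delta = \Delta P + P\Delta - 2P\Delta P$, and left-multiplying by $P$ again forces $P\Delta P = 0$, so (c) follows. To finish, I would verify that $\Omega := [\Delta,P]$ automatically lies in $\so_P(n)$: skew-symmetry of $\Omega$ is immediate from the symmetry of $\Delta$ and $P$, and a one-line computation using $P^2=P$ gives $\Omega P + P\Omega = \Delta P - P\Delta = \Omega$.

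I do not anticipate a serious obstacle; the whole argument is a cascade of identities powered by $P^2 = P$ and the symmetry of $\Delta$. The only mild subtlety worth being explicit about is the direction (c) $\Rightarrow$ (a): after imposing $\Delta_{11} = \Delta_{22} = 0$, one must actually identify the surviving block matrix with an element of the horizontal space~\eqref{eq:HorQGr}, which is immediate upon writing the single independent off-diagonal block as $B^T$ with $B\in\R^{(n-p)\times p}$.
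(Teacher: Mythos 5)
Your proposal is correct, and it differs from the paper's proof in one substantive way: the paper disposes of the equivalence of \ref{prop:TangentVectorChar_a}, \ref{prop:TangentVectorChar_b} and \ref{eq:DP+PD=D} by citing \cite[Result 3.7]{MachadoSalavessa1985}, whereas you derive it from scratch by conjugating into the block frame adapted to $\Space{U}\oplus\Space{U}^\perp$ and reading off that each of the three conditions is exactly $\Delta_{11}=0=\Delta_{22}$. Your block computations check out: $\Delta P+P\Delta-\Delta = Q\begin{psmallmatrix}\Delta_{11}&0\\0&-\Delta_{22}\end{psmallmatrix}Q^T$ is correct, and the identification of \ref{prop:TangentVectorChar_a} with the vanishing of the diagonal blocks is legitimate via \eqref{eq:GrassmannTangentvectorB} together with the surjectivity of $\D\pi^\OG_Q$ established just before it. What your route buys is self-containedness, in the spirit the paper itself professes (elementary matrix-analytic derivations); what it costs is a little length and the need to be explicit, as you are, that the surviving off-diagonal block really parameterizes the horizontal space \eqref{eq:HorQGr}. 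The remaining equivalence \ref{eq:DP+PD=D} $\Leftrightarrow$ \ref{prop:TangentVectorChar_d} in your write-up is essentially verbatim the paper's argument: extract $P\Delta P=0$ by one-sided multiplication with $P$, expand the double commutator, and observe that $\Omega=[\Delta,P]$ is skew and satisfies $\Omega P+P\Omega=\Omega$ using only $P^2=P$ and symmetry. No gaps.
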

    
\subsection{Horizontal Lift to the Stiefel Manifold}
\label{subsec:horizontalliftStiefel}
    The elements of $\Gr(n,p)$ are $n \times n$ matrices (see the bottom level of \autoref{fig:Quotient}). The map $\pi^\OG$ makes it possible to (non uniquely) represent elements of $\Gr(n,p)$ as elements of $\O(n)$---the top level of \autoref{fig:Quotient}---which are also $n \times n$ matrices. In practical computations, however, it is often not feasible to work with $n \times n$ matrices, especially if $n$ is large when compared to the subspace dimension $p$. A remedy is to resort to the middle level of \autoref{fig:Quotient}, namely the Stiefel manifold $\St(n,p)$ \cite{EdelmanAriasSmith1999}. By making use of the map $\pi^\SG$, elements of $\Gr(n,p)$ can be (non uniquely) represented as elements of $\St(n,p)$, which are $n\times p$ matrices. 

    The Stiefel manifold can be obtained analogously to the Grassmann manifold by means of a group action of $\O(n)$ on $\R^{n \times p}$, defined by left multiplication. It is the orbit of
    \begin{equation*}
        I_{n,p}:=\begin{pmatrix}I_p \\ 0\end{pmatrix} \in \R^{n \times p}
    \end{equation*}
    under this group action with stabilizer $\O(n-p)\cong  \left\{\begin{psmallmatrix}I_p & 0\\ 0 & R \end{psmallmatrix}\ \middle|\ R \in \O(n-p)\right\}$. By Proposition~\ref{prop:orbitmanifold}, $\St(n,p) \cong \O(n)/\O(n-p)$ is an embedded submanifold of $\R^{n \times p}$ and the projection from the orthogonal group onto the Stiefel manifold is given by
    \begin{equation*}
        \pi^\OS\colon \O(n) \to \St(n,p),\ Q\mapsto Q I_{n,p},
    \end{equation*}
    the projection onto the first $p$ columns. It defines an equivalence relation on $\O(n)$ by collecting all orthogonal matrices that share the same first $p$ column vectors into an equivalence class. As above, 
    \begin{equation*}
        \dim(\St(n,p))= \dim(\O(n))-\dim(\O(n-p))=np -\frac{1}{2}p(p+1),
    \end{equation*}
    and $\pi^\OS$ is a smooth submersion, which \revcomm{admits a decomposition of} every tangent space $T_Q\O(n)$ into a vertical and horizontal part with respect to the metric $g_Q^\O$. We therefore have
    \begin{equation*}
        T_Q\O(n) = \Ver^{\pi^\OS}_Q \O(n) \oplus \Hor^{\pi^\OS}_Q\O(n),
    \end{equation*}
    where
    \begin{equation*}
        \Ver^{\pi^\OS}_Q \O(n) = \left\lbrace Q\begin{pmatrix} 0 & 0\\ 0 & C\end{pmatrix} \ \middle|\ C \in \so(n-p) \right\rbrace
    \end{equation*}
    and
    \begin{equation*}
        \Hor^{\pi^\OS}_Q \O(n) = \left\lbrace Q\begin{pmatrix} A & -B^T\\ B & 0\end{pmatrix} \ \middle|\ A \in \so(p),\ B \in \R^{(n-p)\times p} \right\rbrace.
    \end{equation*}
    By the identification 
    \begin{equation*}
        T_U\St(n,p) \cong \Hor^{\pi^\OS}_Q \O(n),
    \end{equation*} 
    see \cite{EdelmanAriasSmith1999}, and orthogonal completion $U_\perp \in \R^{n\times (n-p)}$  of $U$, i.e., such that $\begin{pmatrix} U & U_\perp \end{pmatrix}\in \O(n)$, the tangent spaces of the Stiefel manifold are explicitly given by either of the following expressions
    \begin{equation}
    \label{eq:TangSt}
     \begin{split}
        T_U\St(n,p) &= \left\lbrace UA+U_\perp B \in \R^{n \times p}\ \middle|\ A \in \so(p),\ B \in \R^{(n-p)\times p} \right\rbrace\\
        &=\left\lbrace UA + (I_n-UU^T)T \ \middle|\ A \in \so(p),\ T \in \R^{n \times p} \right\rbrace\\
        &=\left\lbrace\Omega U \ \middle|\ \Omega \in \so(n)\right\rbrace \\
        &= \left\lbrace D \in \R^{n \times p} \ \middle|\ U^TD=-D^T U \right\rbrace.
     \end{split}
    \end{equation}
    Note that $U^TU_\perp=0$ and $U_\perp^T U_\perp=I_{n-p}$, as well as $  I_n=UU^T+U_\perp U_\perp^T$.
    
    The \emph{canonical metric} $g_U^\St(\cdot,\cdot)$ on the Stiefel manifold is given via the horizontal lift. That means that for any two tangent vectors in $D_1=UA_1+U_\perp B_1,\ D_2=UA_2+U_\perp B_2 \in T_U\St(n,p)$, we take a total space representative $Q \in \O(n)$ of $U \in \St(n,p)$ and 
    `lift' the tangent vectors $D_1, D_2 \in T_U\St(n,p)$ to tangent vectors 
    $D_{1,Q}^\hor, D_{2,Q}^\hor \in \Hor^{\pi^\OS}_Q\O(n)\subset T_Q\O(n)$, defined by 
    $\D(\pi^\OS)_Q(D_{i,Q}^\hor)=D_i$, $i=1,2$.
    The inner product between $D_{1,Q}^\hor, D_{2,Q}^\hor$
    is now computed according to the metric of $\O(n)$. In practice, this leads to
    \begin{equation*}
    \begin{split}
            g_U^\St(D_1,D_2)&:=g_Q^\O(D_{1,Q}^\hor,D_{2,Q}^\hor)
            =\frac{1}{2}\tr(A_1^TA_2)+\tr(B^T_1B_2)\\
            &=\tr\left(D_1^T(I_n-\frac{1}{2}UU^T)D_2\right),
    \end{split}
    \end{equation*}
    c.f. \cite{EdelmanAriasSmith1999}. The last equality shows that it does not matter which base point $Q \in (\pi^\OS)^{-1}(U)$ is chosen for the lift.
    
    In order to make the transition from column-orthogonal matrices $U$ to the associated subspaces
    $\Space{U} =\Span(U)$, another equivalence relation, this time on the Stiefel manifold, is required:
    Identify any matrices $U \in \St(n,p)$, whose column vectors span the same subspace $\Space{U}$. For any two Stiefel matrices $U,\tilde{U}$ that span the same subspace, it holds that $\tilde{U}=UU^T\tilde{U}$. As a consequence, $I_p = (\tilde{U}^TU)(U^T\tilde{U})$, so that 
    $R = (U^T\tilde{U})\in \O(p)$.
    Hence, any two such Stiefel matrices differ by a rotation/reflection $R \in \O(p)$. Define a smooth right action of $\O(p)$ on $\St(n,p)$ by multiplication from the right. Every equivalence class
    \begin{equation}
    \label{eq:singlebracket}
        \Space{U}\cong [U]:=\left\lbrace \widetilde{U} \in \St(n,p) \ \middle|\ \widetilde{U}=UR,\ R \in \O(p) \right\rbrace
    \end{equation}
    under this group action can be identified with a projector $UU^T$ and vice versa. Therefore, according to \cite[Thm 21.10, p. 544]{Lee2012smooth}, the set of equivalence classes $[U]$, denoted by $\St(n,p)/\O(p)$, is a smooth manifold with a manifold structure for which the quotient map is a smooth submersion. To show that the manifold structure is indeed the same as the one on $\Gr(n,p)$ (which we can identify as a set with $\St(n,p)/\O(p)$), we show directly that the projection from $\St(n,p)$ to $\Gr(n,p)$,
    \begin{equation}
        \label{eq:piSG}
        \pi^\SG\colon \St(n,p) \to \Gr(n,p),\ U \mapsto UU^T,
    \end{equation}
    is a smooth submersion. Indeed, the derivative $\D(\pi^\SG)_U(D)= DU^T+UD^T$ is surjective, since every tangent vector $\Delta \in T_{\pi^\OG(Q)}\Gr(n,p)$ can be written as 
    \begin{equation}
        \label{eq:GrassmannTangentvectorUBUT}
        \Delta = U_\perp B U^T + U B^T U_\perp^T,
    \end{equation}
    by making use of \eqref{eq:GrassmannTangentvectorB}. This shows surjectivity, since for every $\Delta \in T_{\pi^\OG(Q)}\Gr(n,p)$ we can choose $U_\perp B \in T_U\St(n,p)$, such that $\D (\pi^\SG)_U(U_\perp B) = \Delta$.
    
    Again, we split every tangent space $T_U\St(n,p)$ with respect to the projection $\pi^\SG$ and the metric $g^\St_U(\cdot,\cdot)$ on the Stiefel manifold.
    Defining the kernel of $\D (\pi^\SG)_U$ as the vertical space and its orthogonal complement (with respect to the metric $g^\St_U$) as the horizontal space leads to the direct sum decomposition
    \begin{equation*}
        T_U\St(n,p) = \Ver_U \St(n,p) \oplus \Hor_U \St(n,p),
    \end{equation*}
    where
    \begin{equation*}
        \Ver_U \St(n,p)= \ker \D (\pi^\SG)_U = \left\lbrace UA \ \middle|\ A \in \so(p) \right\rbrace
    \end{equation*}
    and
    \begin{equation}
    \label{eq:HorUSt}
        \begin{split}
        \Hor_U\St(n,p)&= (\ker \D (\pi^\SG)_U)^{\perp} =  \left\lbrace U_\perp B \ \middle|\ B \in \R^{(n-p) \times p} \right\rbrace =\left\lbrace (I_n-UU^T)T \ \middle|\ T \in \R^{n \times p} \right\rbrace\\
        &= \left\lbrace D \in \R^{n \times p} \ \middle|\ U^TD=0 \right\rbrace.
     \end{split}
    \end{equation}
  Since $\pi^\SG$ is the only projection that we use on the Stiefel manifold, the dependence of the splitting on the projection is omitted in the notation.
  
  The tangent space $T_P \Gr(n,p)$ of the Grassmannian can be identified with the horizontal space $\Hor_U\St(n,p)$. Therefore, for every tangent vector $\Delta \in T_P\Gr(n,p)$, there is a unique $\Delta^\hor_U \in \Hor_U\St(n,p)$, called the \emph{horizontal lift of $\Delta$ to $U$}. By (\ref{eq:HorUSt}), there are matrices $T \in \R^{n \times p}$ and $B \in \R^{(n-p) \times p}$ such that
    \begin{equation*}
        \Delta^\hor_U = U_\perp B = (I_n-UU^T)T \in \Hor_U\St(n,p).
    \end{equation*}     
    Note that $\Delta^\hor_U$ depends only on the chosen representative $U$ of $P$, while $B$ depends on the chosen orthogonal completion $U_\perp$ as well.
    
    Multiplication of \eqref{eq:GrassmannTangentvectorUBUT} from the right with $U$ shows that the horizontal lift of $\Delta \in T_P\Gr(n,p)$ to $U \in \St(n,p)$ can be calculated by
    \begin{equation}
    \label{eq:HorLiftToStiefel}
        \Delta^\hor_U=\Delta U.
    \end{equation}
    Therefore, the horizontal lifts of $\Delta$ to two different representatives $U$ and $UR$ are connected by
    \begin{equation}
    \label{eq:LiftDifferentBase}
        \Delta^\hor_{UR}=\Delta^\hor_U R,
    \end{equation}
    which relates to \cite[Prop. 3.6.1]{AbsilMahonySepulchre2008}. The lift of $\Delta \in T_P\Gr(n,p)$ to $Q=\begin{pmatrix} U & U_\perp \end{pmatrix} \in \O(n)$ can also be calculated explicitly. By \eqref{eq:dPi}, \eqref{eq:HorQGr} and \eqref{eq:GrassmannTangentvectorB}, it is given by
    \begin{equation*}
        \Delta^\hor_Q=[\Delta,P]Q=Q\begin{pmatrix}0 & -B^T \\ B & 0 \end{pmatrix} \in \Hor^{\pi^\OG}_Q\O(n).
    \end{equation*}
    
    In conclusion, the Grassmann manifold is placed at the end of the following quotient space hierarchy with equivalence classes $[\cdot]$ from \eqref{eq:singlebracket} and $[[\cdot]]$ from \eqref{eq:doublebracket}:
    \begin{eqnarray*}
     \label{eq:GrassmQuotient}
         \Gr(n,p)        
         \cong&\ \St(n,p)/\O(p)
         &=\left\lbrace [U] \ \middle|\ U \in \St(n,p)\right\rbrace\\         
         \cong&\ \O(n)/(\O(p)\times \O(n-p))\ 
         &= \left\lbrace [[Q]] \ \middle|\ Q \in \O(n)\right\rbrace.
    \end{eqnarray*}
    
    \begin{remark}
     It should be noted that there is yet another way of viewing the Grassmann manifold as a quotient. Instead of taking equivalence classes in $\O(n)$, one can take the quotient of the noncompact Stiefel manifold by the general linear group $\GL(p)$. This introduces a factor of the form $(Y^TY)^{-1}$ into many formulae, where $Y \in \R^{n \times p}$ is a rank $p$ matrix with (not necessarily orthogonal) column vectors spanning the desired subspace. For this approach see for example \cite{AbsilMahonySepulchre2004}.
    \end{remark}

\section{Riemannian Structure}
\label{sec:RiemannStruct}
    In this section, we study the basic Riemannian structure of the Grassmannian. We introduce the canonical metric coming from the quotient structure---which coincides with the Euclidean metric---and the Riemannian connection. The Riemannian exponential mapping for geodesics is derived in the formulation as projectors as well as with Stiefel representatives. Lastly, we study the concept of parallel transport on the Grassmannian. Many of those results have been studied before for the projector or the ONB perspective. For the metric and the exponential see for example \cite{EdelmanAriasSmith1999,AbsilMahonySepulchre2004} (Stiefel perspective) and \cite{BatziesHueperMachadoLeite2015} (projector perspective). For the horizontal lift of the Riemannian connection see \cite{AbsilMahonySepulchre2004}. A formula for parallel transport in the ONB perspective was given in \cite{EdelmanAriasSmith1999}. 
    Here we combine the approaches and provide some modifications and additions. 
    We derive formulae for all mentioned concepts in both perspectives and also study the derivative of the exponential mapping.

\subsection{Riemannian Metric}
\label{subsec:RiemannianMetric}
    The Riemannian metric on the Grassmann manifold that is induced by  the quotient structure coincides with (one half times) the Euclidean metric. 
    To see this, let $\Delta_1, \Delta_2 \in T_{P}\Gr(n,p)$ be two tangent vectors at $P \in \Gr(n,p)$ and let $Q=\begin{pmatrix} U & U_\perp \end{pmatrix} \in \O(n)$ such that $\pi^\OG(Q)=P$. The metric on the Grassmann manifold is then inherited from the metric on $\O(n)$ applied to the horizontal lifts, i.e.
    \begin{equation}
    \label{eq:Grassmann_metric_On}
        g_P^\Gr(\Delta_1,\Delta_2):= g_Q^\O(\Delta^\hor_{1,Q},\Delta^\hor_{2,Q}).
    \end{equation}
    Let $\Delta_i=[\Omega_i,P]$, where $\Omega_i \in \so_P(n)$, as well as $\Delta^\hor_{i,Q}=Q\begin{psmallmatrix}0 & \smash{-B_i^T} \\ B_i & 0 \end{psmallmatrix}$ and $\Delta^\hor_{i,U}=U_\perp B_i$. We immediately see that
    \begin{equation}
    \label{eq:GrassmannMetricAlternatives}
     \begin{split}
        g_P^\Gr(\Delta_1,\Delta_2)&= \frac{1}{2}\tr\left((\Delta^\hor_{1,Q})^T\Delta^\hor_{2,Q}\right)
        = \tr\left(\Delta^{\hor^T}_{1,U}\Delta^\hor_{2,U}\right)=\tr(U^T\Delta_1\Delta_2 U)\\
        &= \tr(B_1^T B_2)
        = \frac{1}{2}\tr(\Delta_1\Delta_2)
        =\frac{1}{2}\tr(\Omega_1^T\Omega_2).
     \end{split}
    \end{equation} 
    The last equality can be seen by noticing $[\Omega_i,P]=(I_n-2P)\Omega_i$ for $\Omega_i \in \so_P(n)$ and $(I_n-2P)^2=I_n$. Although the formulae in (\ref{eq:GrassmannMetricAlternatives}) all look similar, notice that $\Delta_i,\Omega_i,\Delta^\hor_{i,Q} \in \R^{n \times n}$, but $\Delta^\hor_{i,U} \in \R^{n \times p}$ and $B_i \in \R^{(n-p) \times p}$.
    
    The metric does not depend on the point to which we lift: Lifting to a different $UR \in \St(n,p)$ results in a postmultiplication of $\Delta^\hor_{i,U}$ with $R$ according to (\ref{eq:LiftDifferentBase}). By the invariance properties of the trace, this does not change the metric. An analogous argument holds for the lift to $\O(n)$.
    
    With the Riemannian metric we can define the induced norm of a tangent vector $\Delta \in T_P\Gr(n,p)$ by
    \begin{equation*}
        \norm{\Delta}:=\sqrt{g_P^\Gr(\Delta,\Delta)}=\frac{1}{\sqrt{2}}\sqrt{\tr(\Delta^2)}.
    \end{equation*} 
    
\subsection{Riemannian Connection}
\label{subsec:RiemmanianConnection}
    The disjoint collection of all tangent spaces of a manifold $M$ is called the \emph{tangent bundle} $TM = \dot\cup_{p\in M} T_pM$\revcomm{, which is itself a smooth manifold, c.f. \cite[Proposition 3.18]{Lee2012smooth}}.
    A \emph{smooth vector field} on $M$ is a smooth map $X$ from $M$ to $TM$ that maps a point $p \in M$ to a tangent vector $X(p) \in T_pM$.
    The set of all smooth vector fields on $M$ is denoted by $\Vectorfields(M)$. 
    Plugging smooth vector fields $Y,Z \in \Vectorfields(M)$ into the metric of a Riemannian manifold gives a smooth function $g(Y,Z)\colon M\to \R$.
    It is not possible to calculate the differential of a vector field in the classical sense, since every tangent space is a separate vector space and the addition of $X(p) \in T_pM$ and $X(q) \in T_q M$ is not defined for $p\neq q$. 
    To this end, the abstract machinery of differential geometry provides special tools called \emph{connections}. A connection acts as the derivative of a vector field in the direction of another vector field. On a Riemannian manifold $(M,g)$, the \emph{Riemannian} or \emph{Levi-Civita connection} is the unique connection $\nabla \colon \Vectorfields(M) \times \Vectorfields(M) \to \Vectorfields(M): (X,Y)\mapsto \nabla_X Y$ that is 
    \begin{itemize}
    \item   {\em compatible with the metric}:
    for all vector fields $X,Y,Z \in \Vectorfields(M)$, we have the product rule
    \begin{equation*}
        \nabla_X g(Y,Z)=g(\nabla_X Y,Z)+g(Y,\nabla_X Z).
    \end{equation*}
    \item  {\em torsion free}: for all $X,Y \in \Vectorfields(M)$,
    $
        \nabla_XY-\nabla_YX=[X,Y],
    $\\
    where $[X,Y] = X(Y) - Y(X)$ denotes the Lie bracket of two vector fields.
    \end{itemize}
    The Riemannian connection can be explicitly calculated in the case of embedded submanifolds: It is the projection of the Levi-Civita connection of the ambient manifold onto the tangent space of the embedded submanifold. For details see for example \cite[\revcomm{Chapter 5 \& Chapter 8, Proposition 8.6}]{Lee2018riemannian}.
    
    The Euclidean space $\R^{n \times p}$ is a vector space, which implies that every tangent space of $\R^{n \times p}$ can be identified with $\R^{n \times p}$ itself. Therefore, the Riemannian connection of the Euclidean space $\R^{n\times p}$ with the Euclidean metric (\ref{eq:EuclideanMetric}) is the usual directional derivative: Let $F \colon \R^{n \times p} \to \R^{n \times p}$ and $X,Y \in \R^{n \times p}$. The directional derivative of $F$ at $X$ in direction $Y$ is then
    \begin{equation*}
        \D F_X(Y)=\frac{\D}{\D t}\Big\vert_{t=0}F(X+tY).
    \end{equation*}
    The same holds for the space of symmetric matrices $\Sym_n$. When considered as the set of orthogonal projectors, the Grassmann manifold $\Gr(n,p)$ is an embedded submanifold of $\Sym_n$. In this case, the projection onto the tangent space is  
    \begin{equation}
    \label{eq:projSymTPGr}
        \Pi_{T_P\Gr} \colon \Sym_n \to T_P\Gr(n,p), \quad
        S \mapsto (I_n-P)SP + PS(I_n-P),
    \end{equation}
    see also \cite{MachadoSalavessa1985}. 
    In order to restrict calculations to $n\times p$ matrices, 
    we can lift to the Stiefel manifold and use  the projection onto the horizontal space, which is
    \begin{equation}
    \label{eq:projRHorUSt}
        \Pi_{\Hor_U\St} \colon \R^{n \times p} \to \Hor_U\St(n,p), \quad
        Z \mapsto (I_n-UU^T)Z,
    \end{equation}
    see also  \cite{EdelmanAriasSmith1999,AbsilMahonySepulchre2004}. Note that $\Hor_U\St(n,p) \cong T_{\pi^\SG(U)}\Gr(n,p)$ as described in Subsection \ref{subsec:horizontalliftStiefel}. The Riemannian connection on $\Gr(n,p)$ is now obtained via the following proposition.
    \begin{proposition}[Riemannian Connection]
            Let $X \in \Vectorfields(\Gr(n,p))$ be a smooth vector field on $\Gr(n,p)$, i.e., $X(P) \in T_P\Gr(n,p)$, with a smooth extension to an open set in the symmetric $n \times n$ matrices, again denoted by $X$. Let $Y \in T_P\Gr(n,p)$. The Riemannian connection on $\Gr(n,p)$ is then given by
            \begin{equation}
                \label{eq:connectionGr}
                \nabla_Y(X)=\Pi_{T_P\Gr}(\D X_P(Y))=\Pi_{T_P\Gr}\left(\frac{\D}{\D t}\Big\vert_{t=0} X(P + tY)\right).
            \end{equation}
            It can also be calculated via the horizontal lift,
            \begin{equation}
                \label{eq:connectionGrlifted}
                (\nabla_Y(X))^\hor_U=\Pi_{\Hor_U\St}(\D (U \mapsto X^\hor_U)_U(Y^\hor_U))
                =(I_n -UU^T)\frac{\D}{\D t}\Big\vert_{t=0} X^\hor_{U+tY^\hor_U}.
            \end{equation}
            Here, $\R^{n \times p} \ni U \mapsto X^\hor_U \in \R^{n \times p}$ is to be understood as a smooth extension to an open subset of $\R^{n \times p}$ of the actual horizontal lift $U \mapsto X^\hor_U:=(X(UU^T))^\hor_U$ from \eqref{eq:HorLiftToStiefel}, i.e., fulfilling $\D (\pi^\SG)_UX^\hor_U=X(UU^T)$, where $X$ is the vector field $P \mapsto X(P)$.
            \begin{proof}
                Equation \eqref{eq:connectionGr} follows directly from the preceding discussion. It can be checked that \eqref{eq:connectionGrlifted} is the horizontal lift of \eqref{eq:connectionGr}. Alternatively, \eqref{eq:connectionGrlifted} can be deduced from \cite[Lemma 7.45]{ONeill1983} by noticing that the horizontal space of the Stiefel manifold is the same for the Euclidean and the canonical metric. Furthermore, \eqref{eq:connectionGrlifted} coincides with \cite[Theorem 3.4]{AbsilMahonySepulchre2004}.
            \end{proof}
    \end{proposition}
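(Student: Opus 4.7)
The plan is to treat \eqref{eq:connectionGr} first, then derive \eqref{eq:connectionGrlifted} by direct verification that its right-hand side is the horizontal lift of the right-hand side of \eqref{eq:connectionGr}; an alternative route via O'Neill's Riemannian-submersion formula is also available.

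For \eqref{eq:connectionGr}, the argument amounts to an appeal to submanifold theory. By \eqref{eq:GrassmannMetricAlternatives}, the Grassmann metric $g^\Gr_P$ is half the restriction of the Euclidean inner product on $\Sym_n$ to the tangent spaces of $\Gr(n,p)$, and scaling by a positive constant does not alter the Levi-Civita connection. Since the ambient connection on the Euclidean vector space $\Sym_n$ is the directional derivative $\D X_P(Y)$, the standard formula for the Levi-Civita connection of an embedded Riemannian submanifold---tangential projection of the ambient connection---combined with the explicit projector \eqref{eq:projSymTPGr} yields \eqref{eq:connectionGr} immediately.

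For \eqref{eq:connectionGrlifted}, I would show that its RHS is the horizontal lift (in the sense of \eqref{eq:HorLiftToStiefel}) of the RHS of \eqref{eq:connectionGr}. The key steps are: (i) for any symmetric $S$, $\Pi_{T_P\Gr}(S)\cdot U = (I_n - UU^T)\, S\, U$, which follows from $P U = U$ and $(I_n-P)U = 0$ with $P = UU^T$; (ii) the chain rule applied to $U \mapsto X^\hor_U = X(UU^T)\, U$ together with \eqref{eq:dPiGr} gives
\[
\D X^\hor_U(Y^\hor_U) \;=\; \D X_P(Y)\, U \;+\; X(P)\, Y^\hor_U;
\]
(iii) Proposition~\ref{prop:TangentVectorCharacterizations} implies $X(P) = P X(P) + X(P) P$, and horizontality provides $P Y^\hor_U = UU^T Y^\hor_U = 0$, so $X(P) Y^\hor_U$ lies in the range of $P$ and is annihilated by $(I_n - UU^T)$. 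Combining (i)--(iii) identifies $(I_n-UU^T)\D X^\hor_U(Y^\hor_U)$ with $(\nabla_Y X)^\hor_U$.

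The main obstacle is less geometric than a matter of bookkeeping: ensuring that both sides of \eqref{eq:connectionGrlifted} are well defined independently of the choice of smooth extension of $X^\hor$ to an open subset of $\R^{n\times p}$. This well-definedness is exactly what the cancellation in step (iii) above guarantees, since the problematic term $X(P) Y^\hor_U$---which does depend on the extension---is killed by the horizontal projector. An alternative route is to invoke O'Neill's Lemma 7.45 for the Riemannian submersion $\pi^\SG\colon (\St(n,p), g^\St)\to(\Gr(n,p), g^\Gr)$; the subtle point there is to observe that the horizontal distribution and its induced inner product coincide with those of the Euclidean metric on $\St(n,p)$, so the horizontal part of the canonical-metric Levi-Civita connection agrees with the horizontal projection of the Euclidean directional derivative on $\R^{n \times p}$.
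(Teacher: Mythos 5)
Your proposal is correct and follows essentially the same route as the paper: equation \eqref{eq:connectionGr} via the embedded-submanifold projection of the ambient (Euclidean) connection, and \eqref{eq:connectionGrlifted} by verifying it is the horizontal lift of \eqref{eq:connectionGr}, with O'Neill's Lemma 7.45 noted as an alternative. The only difference is that you carry out explicitly the computation (steps (i)--(iii)) that the paper dismisses with ``it can be checked,'' which is a welcome completion rather than a deviation.
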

    
\subsection{Gradient}
    The gradient of a real-valued function on the Grassmannian for the canonical metric was computed in \cite{EdelmanAriasSmith1999} for the Grassmannian with Stiefel representatives, in \cite{HelmkeHueperTrumpf2007} for the projector perspective and in \cite{AbsilMahonySepulchre2004} for the Grassmannian as a quotient of the noncompact Stiefel manifold. For the sake of completeness, we introduce it here as well.
    The gradient is dual to the differential of a function in the following sense:
    For a function $f \colon \Gr(n,p) \to \R$, the gradient at $P$ is defined as the unique tangent vector $(\grad f)_P \in T_P\Gr(n,p)$ fulfilling
    \begin{equation*}
     \D f_P(\Delta)= g_P^\Gr((\grad f)_P, \Delta)
    \end{equation*}
    for all $\Delta \in T_P\Gr(n,p)$, where $\D f_P$ denotes the differential of $f$ at $P$. 

    It is well known that the gradient for the induced Euclidean metric on a manifold is the projection of the Euclidean gradient $\grad^{\eucl}$ to the tangent space. For the Euclidean gradient to be well-defined, $f$ is to be understood as a smooth extension of the actual function $f$ to an open subset of $\Sym_n$. Therefore
    \begin{equation*}
        (\grad f)_P = \Pi_{T_P\Gr}((\grad^{\eucl} f)_P).
    \end{equation*}
    The function $f$ on $\Gr(n,p)$ can be lifted to the function $\bar{f}:=f \circ \pi^\SG$ on the Stiefel manifold. Again, when necessary, we identify $\bar f$ with a suitable differentiable extension. These two functions are linked by
    \begin{equation*}
        ((\grad f)_P)^\hor_U=(\grad \bar{f})_U=\Pi_{T_U\St}((\grad^{\eucl} \bar{f})_U)=\Pi_{\Hor_U\St}((\grad^{\eucl} \bar{f})_U),
    \end{equation*}
    where $\Pi_{T_U\St}(X)=X-\frac{1}{2}U(X^TU+U^TX)$ is the projection of $X \in \R^{n \times p}$ to $T_U\St(n,p)$. The first equality is~\cite[Equation (3.39)]{AbsilMahonySepulchre2004}, while the second equality uses the same argument as above. The last equality is due to the fact that the gradient of $\bar{f}$ has no vertical component.
    For further details see \cite{EdelmanAriasSmith1999,HelmkeHueperTrumpf2007,AbsilMahonySepulchre2004}.

\subsection{Exponential Map}
\label{subsec:ExponentialMap}
    The \emph{exponential map} $\exp_p \colon T_pM \to M$ on a Riemannian manifold $M$ maps a tangent vector $\Delta \in T_pM$ to 
    the endpoint $\gamma(1) \in M$ of the unique geodesic $\gamma$ that emanates from $p$ in the direction $\Delta$.
    Thus, geodesics and the Riemannian exponential are related by $\gamma(t)=\exp_p(t\Delta)$. 
    Under a Riemannian submersion $\pi \colon M \to N$, geodesics with horizontal tangent vectors in $M$ are mapped to geodesics in $N$, cf. \cite[Corollary 7.46]{ONeill1983}. Since the projection $\pi^\OG \colon \O(n) \to \Gr(n,p)$ defined in (\ref{eq:projectionOGr}) is a Riemannian submersion by construction, this observation may be used to obtain the Grassmann geodesics.
    
    We start with the geodesics of the orthogonal group.
    For any Lie group with bi-invariant metric, the geodesics are the one-parameter subgroups, \cite[\S 2]{Alexandrino2015}.
    Therefore, the geodesic from $Q \in \O(n)$ in direction $Q\Omega \in T_Q\O(n)$ is calculated via
    \begin{equation*}
        \Exp_Q^\O(tQ\Omega)=Q\expm(t\Omega),
    \end{equation*}
    where $\expm$ denotes the matrix exponential, see \eqref{eq:MatrxiExpLog}. If $\pi^\OG(Q)=P \in \Gr(n,p)$ and $\Delta \in T_P\Gr(n,p)$ with $\Delta^\hor_Q=Q\begin{psmallmatrix} 0 & -\smash{B^T}\\ B & 0\end{psmallmatrix} \in \Hor^{\pi^\OG}_Q \O(n)$, the geodesic in the Grassmannian is therefore
    \begin{equation}
    \label{eq:GrassmannExpO}
        \Exp_P^\Gr(t\Delta)=\pi^\OG\left(Q\expm \left(t\begin{pmatrix} 0 & -B^T\\ B & 0\end{pmatrix}\right)\right).
    \end{equation}
    This formula, while simple, is not useful for applications with large $n$, since it involves the matrix exponential of an $n \times n$ matrix. Evaluating the projection $\pi^\OG$ leads to the geodesic formula from \cite{BatziesHueperMachadoLeite2015}:
    \begin{proposition}[Grassmann Exponential:  Projector Perspective]
        Let $P \in \Gr(n,p)$ be a point in the Grassmannian and $\Delta \in T_P\Gr(n,p)$. The exponential map is given by
        \begin{equation*}
            \Exp_P^\Gr(\Delta)=\expm([\Delta,P])P\expm(-[\Delta,P]).
        \end{equation*}
        \begin{proof}
          With $\Omega = [\Delta,P] = Q\widetilde{\Omega}Q^T \in \so_P(n)$ and $\widetilde{\Omega} = \begin{psmallmatrix} 0 & -\smash{B^T}\\ B & 0\end{psmallmatrix}$,
          the horizontal lift of the tangent vector $\Delta=[\Omega,P] \in T_P\Gr(n,p)$
            is given by $\Omega Q \in \Hor^{\pi^\OG}_Q\O(n)$, see \eqref{eq:soPn}. Then
            \begin{align*}
                \Exp_P^\Gr([\Omega,P])&= Q \expm(\widetilde{\Omega}) I_{n,p} I_{n,p}^T \expm(\widetilde{\Omega}^T)Q^T\\
                &=\expm(Q\widetilde{\Omega}Q^T)QI_{n,p} I_{n,p}^T Q^T \expm(Q\widetilde{\Omega}^TQ^T)=\expm(\Omega) P \expm(\Omega^T).
            \end{align*} 
        \end{proof}
    \end{proposition}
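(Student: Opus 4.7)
The plan is to apply the already-established Lie group formula \eqref{eq:GrassmannExpO}, namely
$\Exp^\Gr_P(\Delta) = \pi^\OG\bigl(Q\expm(\widetilde\Omega)\bigr)$ at $t=1$, where $Q\in\O(n)$ is any representative with $\pi^\OG(Q)=P$ and $\widetilde\Omega = \begin{psmallmatrix}0 & -B^T\\ B & 0\end{psmallmatrix}$ encodes the horizontal lift of $\Delta$, and then rewrite everything in terms of $\Omega=[\Delta,P]\in\so_P(n)$ without reference to the chosen $Q$.

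First, I would use Proposition~\ref{prop:TangentVectorCharacterizations}\ref{prop:TangentVectorChar_d} to identify $\Omega := [\Delta,P]\in\so_P(n)$ and note that the horizontal lift of $\Delta=[\Omega,P]$ to $Q$ is $\Delta^\hor_Q = \Omega Q \in \Hor^{\pi^\OG}_Q\O(n)$; equivalently, writing $\Omega = Q\widetilde\Omega Q^T$ with the block form above, one has $\Delta^\hor_Q = Q\widetilde\Omega$. This is exactly the setup preceding \eqref{eq:GrassmannExpO}.

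Second, I would invoke the conjugation identity for the matrix exponential, namely $\expm(Q\widetilde\Omega Q^T) = Q\expm(\widetilde\Omega)Q^T$ (since $Q$ is invertible), which rearranges to $Q\expm(\widetilde\Omega) = \expm(\Omega)Q$. Substituting into \eqref{eq:GrassmannExpO} and expanding $\pi^\OG$ via $\pi^\OG(M) = M P_0 M^T$ with $P_0 = I_{n,p}I_{n,p}^T$ gives
\begin{equation*}
\Exp^\Gr_P(\Delta) = \bigl(Q\expm(\widetilde\Omega)\bigr)P_0\bigl(Q\expm(\widetilde\Omega)\bigr)^T
= \expm(\Omega)\,(Q P_0 Q^T)\,\expm(\widetilde\Omega^T)\,Q^T Q\, Q^{-T}\cdots
\end{equation*}
where I would be careful to only push the $Q$'s through the exponentials on both sides. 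Concretely: move $Q$ past $\expm(\widetilde\Omega)$ on the left to turn $Q\expm(\widetilde\Omega)$ into $\expm(\Omega)Q$, and symmetrically $\expm(\widetilde\Omega)^T Q^T = Q^T \expm(\Omega)^T$, so the middle $Q P_0 Q^T$ collapses to $P$ and the outer factors are $\expm(\Omega)$ and $\expm(\Omega)^T = \expm(\Omega^T) = \expm(-\Omega)$, using skew-symmetry of $\Omega\in\so(n)$. This yields
\begin{equation*}
\Exp^\Gr_P(\Delta) = \expm([\Delta,P])\,P\,\expm(-[\Delta,P]),
\end{equation*}
as desired.

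There is no substantial obstacle here; the only points that require care are (i) verifying that $\Omega Q$ really lies in $\Hor^{\pi^\OG}_Q\O(n)$, which is immediate from the block structure of $\widetilde\Omega = Q^T\Omega Q$ and the characterization~\eqref{eq:HorQGr}, and (ii) ensuring that the final expression is independent of the arbitrary choice of $Q\in(\pi^\OG)^{-1}(P)$, which is automatic because $\Omega=[\Delta,P]$ and $P$ are intrinsic to $\Delta$ and $P$ with no $Q$-dependence left. The skew-symmetry $\Omega^T=-\Omega$ is what converts the transposed exponential on the right into $\expm(-[\Delta,P])$, producing the symmetric conjugation-like form stated in the proposition.
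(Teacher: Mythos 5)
Your proposal is correct and follows essentially the same route as the paper's own proof: both start from the lifted formula \eqref{eq:GrassmannExpO}, use the conjugation identity $Q\expm(\widetilde\Omega)=\expm(Q\widetilde\Omega Q^T)Q$ to pull the representative $Q$ through the matrix exponentials, and collapse $QP_0Q^T$ to $P$, with skew-symmetry of $\Omega$ giving $\expm(\Omega^T)=\expm(-\Omega)$. The only difference is presentational; your intermediate display is slightly garbled but the surrounding prose describes the correct manipulation.
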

    
    If $n\gg p$, then working with Stiefel representatives reduces the computational effort immensely.
    The corresponding  geodesic formula appears in \cite{AbsilMahonySepulchre2004,EdelmanAriasSmith1999}
    and is restated in the following proposition. 
    The bracket $[\cdot]$ denotes the equivalence classes from (\ref{eq:singlebracket}).

    \begin{proposition}[Grassmann Exponential: ONB Perspective]
    \label{prop:GrassExp_Stiefel}
        For a point $P=UU^T \in \Gr(n,p)$ and a tangent vector  $\Delta \in T_P\Gr(n,p)$, let $\Delta^\hor_U \in \Hor_U\St(n,p)$ be the horizontal lift of $\Delta$ to $\Hor_U\St(n,p)$. Let $r\leq \min(p,n-p)$ be the number of non-zero singular values of $\Delta^\hor_U$. Denote the thin singular value decomposition (SVD) of $\Delta^\hor_U$ by
        \begin{equation*}
            \Delta^\hor_U=\hat{Q}\Sigma V^T,
        \end{equation*}
        i.e., $\hat{Q} \in \St(n,r), \Sigma=\diag(\sigma_1, \dots, \sigma_r)$ and $V \in \St(p,r)$. The Grassmann exponential for the geodesic from $P$ in direction $\Delta$ is given by
        \begin{equation}
        \label{eq:GrassmannExpStthin}
        \begin{split}
            \Exp_P^\Gr(t\Delta)&=[UV\cos(t\Sigma)V^T + \hat{Q}\sin(t\Sigma)V^T + UV_\perp V_\perp^T]\\
            &=[\begin{pmatrix}[c|c] UV\cos(t\Sigma) + \hat{Q}\sin(t\Sigma) & UV_\perp\end{pmatrix}],
        \end{split}
        \end{equation}
        which does not depend on the chosen orthogonal completion $V_\perp$.
    \begin{proof}
    This is essentially \cite[Theorem 2.3]{EdelmanAriasSmith1999} with a reduced storage requirement for $\hat{Q}$ in case of rank-deficient tangent velocity vectors.
        The thin SVD of $B$ is given by
        \begin{equation*}
            B=U_\perp^T\Delta^\hor_U=U_\perp^T\hat{Q}\Sigma V^T
        \end{equation*}
        with $W:=U_\perp^T\hat{Q} \in \St(n-p,r)$, $\Sigma\in\R^{r\times r}$, $V\in St(p,r)$. Let $W_\bot, V_\bot$ be suitable orthogonal completions. Then,
        \begin{equation*}
            \expm \begin{psmallmatrix} 0 & \smash{-B^T}\\ B & 0\end{psmallmatrix}=\begin{psmallmatrix} V & V_\perp & 0 & 0\\ 0 & 0 & W & W_\perp\end{psmallmatrix}\begin{psmallmatrix} \cos(\Sigma) & 0 & -\sin(\Sigma) & 0\\ 0 & I_{p-r} & 0 & 0\\ \sin(\Sigma) & 0 & \cos(\Sigma) & 0 \\ 0 & 0 & 0 & I_{n-p-r}\end{psmallmatrix} \begin{psmallmatrix} V^T & 0 \\ V_\perp^T & 0 \\ 0 & W^T\\ 0 & W_\perp^T\end{psmallmatrix},
        \end{equation*}
        which leads to the desired result when inserted into (\ref{eq:GrassmannExpO}). The second equality in \eqref{eq:GrassmannExpStthin} is given by a postmultiplication by $\begin{pmatrix}V & V_\perp\end{pmatrix} \in \O(p)$, which does not change the equivalence class. This postmultiplication does however change the Stiefel representative, so $\begin{pmatrix}[c|c] UV\cos(t\Sigma)+ \hat{Q}\sin(t\Sigma) & UV_\perp\end{pmatrix}$ is the Stiefel geodesic from $\begin{pmatrix} UV & UV_\perp\end{pmatrix}$ in direction $\begin{pmatrix}\hat{Q}\Sigma & 0 \end{pmatrix}$. A different orthogonal completion of $V$ does not change the second expression in~\eqref{eq:GrassmannExpStthin} and results in a different representative of the same equivalence class in the third expression.
    \end{proof}
    \end{proposition}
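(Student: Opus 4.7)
The plan is to start from the exponential formula \eqref{eq:GrassmannExpO} that was already derived in the orthogonal group perspective and unfold it using the SVD of the horizontal lift. Write $Q=(U\ U_\perp)\in\O(n)$ so that $\pi^\OG(Q)=P$, and recall from \eqref{eq:HorLiftToStiefel} and the horizontal structure that $\Delta^\hor_U=U_\perp B$ for $B=U_\perp^T\Delta^\hor_U\in\R^{(n-p)\times p}$. The task reduces to evaluating
\[
  \expm\!\left(t\begin{pmatrix} 0 & -B^T\\ B & 0\end{pmatrix}\right)
\]
in closed form and then applying $\pi^\OG$ with the block $Q=(U\ U_\perp)$.

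First I would translate the given thin SVD of the horizontal lift into a thin SVD of $B$. Since $\Delta^\hor_U=\hat Q\Sigma V^T$ with $U^T\Delta^\hor_U=0$ and $\Sigma$ invertible on its support with $V$ having orthonormal columns, one gets $U^T\hat Q=0$, so $\hat Q=U_\perp U_\perp^T\hat Q$. Setting $W:=U_\perp^T\hat Q$, this yields $W^TW=\hat Q^T(I_n-UU^T)\hat Q=\hat Q^T\hat Q=I_r$, hence $W\in\St(n-p,r)$, and $B=W\Sigma V^T$ is a thin SVD of $B$. The first non-routine bookkeeping step is precisely this verification of $W\in\St(n-p,r)$ in the rank-deficient case $r<\min(p,n-p)$, which is exactly what was not treated with full care in \cite{EdelmanAriasSmith1999}.

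Next, choose arbitrary orthogonal completions $V_\perp\in\St(p,p-r)$ and $W_\perp\in\St(n-p,n-p-r)$. Then $(V\ V_\perp)\in\O(p)$ and $(W\ W_\perp)\in\O(n-p)$ conjugate the skew block to a permuted block-diagonal form whose non-identity blocks are the $2\times 2$ rotations $\begin{psmallmatrix} 0 & -\sigma_i\\ \sigma_i & 0\end{psmallmatrix}$. Their matrix exponentials are the standard rotations $\begin{psmallmatrix}\cos(t\sigma_i) & -\sin(t\sigma_i)\\ \sin(t\sigma_i) & \cos(t\sigma_i)\end{psmallmatrix}$, so reassembling gives the factorization displayed in the statement for $\expm$. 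I would then multiply from the left by $Q=(U\ U_\perp)$, read off the first $p$ columns to get a Stiefel representative, and use $\hat Q=U_\perp W$ to replace $U_\perp W$ by $\hat Q$. This produces $UV\cos(t\Sigma)V^T+\hat Q\sin(t\Sigma)V^T+UV_\perp V_\perp^T$, and since $[\,\cdot\,]$ denotes the $\O(p)$-equivalence class on $\St(n,p)$, post-multiplying by $(V\ V_\perp)\in\O(p)$ yields the second representative in \eqref{eq:GrassmannExpStthin}.

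The main obstacle, beyond the rank-deficiency bookkeeping, is to argue independence of the choice of $V_\perp$. For the first (projector-like) representative this is immediate because $UV_\perp V_\perp^T=U(I_p-VV^T)$, which only depends on $V$. For the second (concatenated) representative, a different choice $\tilde V_\perp=V_\perp R$ with $R\in\O(p-r)$ simply post-multiplies the last block by $R$, producing a new Stiefel matrix in the same $\O(p)$-equivalence class, hence the same point in $\Gr(n,p)$. With these checks in place, the formula and its independence claim follow.
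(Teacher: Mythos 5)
Your proposal is correct and follows essentially the same route as the paper's proof: converting the thin SVD of $\Delta^\hor_U$ into a thin SVD of $B=U_\perp^T\Delta^\hor_U$, evaluating the block matrix exponential in closed form, and inserting it into \eqref{eq:GrassmannExpO}. You additionally spell out the verification that $W=U_\perp^T\hat{Q}\in\St(n-p,r)$ and the $V_\perp$-independence argument, both of which the paper asserts more tersely; these are welcome details but do not change the argument.
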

    The formula established in \cite{EdelmanAriasSmith1999} uses the compact SVD $\Delta^\hor_U=\tilde{Q}\tilde{\Sigma}\tilde{V}^T$ with $\tilde{Q} \in \St(n,p),\ \tilde{\Sigma}=\diag(\sigma_1,...,\sigma_p)$ and $\tilde{V} \in \O(p)$. Then
    \begin{equation}
        \label{eq:GrassmannExpSt}
        \Exp_P^\Gr(t\Delta)=[U\tilde{V}\cos(t\tilde{\Sigma})\tilde{V}^T + \tilde{Q}\sin(t\tilde{\Sigma})\tilde{V}^T].
    \end{equation}
    By a slight abuse of notation we also define
    \begin{equation}
        \label{eq:GrassmannExpStRep}
        \Exp_U^\Gr(t\Delta^\hor_U)=U\tilde{V}\cos(t\tilde{\Sigma})\tilde{V}^T + \tilde{Q}\sin(t\tilde{\Sigma})\tilde{V}^T
    \end{equation}
    to be the Grassmann exponential on the level of Stiefel representatives.
    
\subsection{Differentiating the Grassmann Exponential}
\label{sec:DifferentialOfExp}
    In this section, we compute explicit expressions for the differential $\D(\Exp_P^{\Gr})_\Delta$ of the Grassmann exponential
    at a tangent location $\Delta\in T_P\Gr(n,p)$. One possible motivation is the computation of Jacobi fields vanishing at a point in Subsection \ref{subsec:jacobifields}. Another motivation is, e.g., Hermite manifold interpolation as in \cite{ZimmermannHermite_2020}.

    Formally, the differential at $\Delta$ is the linear map
    \begin{equation}
    \label{eq:diffmap_Grassmann}
    \D(\Exp_P^{\Gr})_{\Delta}\colon T_\Delta(T_P\Gr(n,p)) \to T_{\Exp_P^{\Gr}(\Delta)}\Gr(n,p).
    \end{equation}
    The tangent space to a linear space can be identified with the linear space itself, so that
    $T_\Delta(T_P\Gr(n,p)) \cong T_P\Gr(n,p)$. We also exploit this principle in practical computations.
    We consider the exponential in the form of \eqref{eq:GrassmannExpSt}.
    The task boils down to computing the directional derivatives
    \begin{equation}
    \label{eq:diffGrassmann}
    \D(\Exp_P^{\Gr})_{\Delta}(\tilde{\Delta}) =
    \frac{\D}{\D t}\Big\vert_{t=0} \Exp_P^{\Gr}(\Delta + t\tilde{\Delta}),  
    \end{equation}
    where $\Delta, \tilde{\Delta}\in T_P\Gr(n,p)$. A classical result in Riemannian geometry~\cite[Prop. 5.19]{Lee2018riemannian} ensures that for $\Delta = 0 \in T_P\Gr(n,p)$ the derivative is the identity
    $\D(\Exp_P^{\Gr})_{0}(\tilde{\Delta}) = \tilde{\Delta}$.
    For $\Delta \neq 0$, we can proceed as follows:

    \begin{proposition}[Derivative of the Grassmann Exponential]
        \label{prop:derivative_Grassmannexp}
        Let $P=UU^T \in \Gr(n,p)$ and $\Delta, \tilde\Delta \in T_P\Gr(n,p)$ such that $\Delta^\hor_U$ has mutually distinct, non-zero singular values. Furthermore let $\Delta^\hor_U=Q\Sigma V^T$ and $(\Delta+t\tilde\Delta)^\hor_U=Q(t)\Sigma(t)V(t)^T$ be the compact SVDs of the horizontal lifts of $\Delta$ and $\Delta + t\tilde \Delta$, respectively. Denote the derivative of $Q(t)$ evaluated at $t=0$ by $\dot Q = \frac{\D}{\D t}\big\vert_{t=0}Q(t)$ and likewise for $\Sigma(t)$ and $V(t)$.\footnote{The matrices $\dot Q, \dot \Sigma$ and $\dot V$ can be calculated via Algorithm~\ref{alg:SVDdiff}.} Let 
        \begin{equation*}
            Y:= UV\cos(\Sigma) + Q \sin(\Sigma)\in \St(n,p)
        \end{equation*}
        and 
        \begin{equation*}
            \Gamma:=U\dot V \cos(\Sigma) - U V \sin(\Sigma) \dot\Sigma + \dot Q \sin(\Sigma) + Q \cos(\Sigma)\dot \Sigma \in T_Y\St(n,p).
        \end{equation*}
        Then the derivative of the Grassmann exponential is given by
        \begin{equation}
        \label{eq:GrassExpDiffStiefel}
            \D(\Exp_P^{\Gr})_{\Delta}(\tilde{\Delta}) = \Gamma Y^T + Y \Gamma^T \in T_{\Exp^\Gr_P(\Delta)}\Gr(n,p) \subseteq \R^{n \times n}.
         \end{equation}
        The horizontal lift to $Y$ is accordingly 
        \begin{equation}
        \label{eq:derivative_Grassmannexp}
            \left( \D(\Exp_P^{\Gr})_{\Delta}(\tilde{\Delta}) \right)^\hor_{Y} = (I_n-YY^T)\Gamma = \Gamma + Y \Gamma^T Y  \in \R^{n \times p}.
        \end{equation}
        \end{proposition}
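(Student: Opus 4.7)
The plan is to rewrite the Grassmann exponential in projector form $YY^T$, differentiate the resulting curve $t\mapsto Y(t)Y(t)^T$, and then convert between the ambient expression and the horizontal lift. First observe that the Stiefel representative in \eqref{eq:GrassmannExpStRep} factors as $UV\cos\Sigma V^T + Q\sin\Sigma V^T = YV^T$, so \eqref{eq:GrassmannExpSt} yields
\begin{equation*}
\Exp_P^\Gr(\Delta) = (YV^T)(YV^T)^T = YY^T,
\end{equation*}
where the cancellation $V^TV = I_p$ uses that $V \in \O(p)$ (guaranteed by the assumption that $\Sigma$ has $p$ distinct nonzero entries). A short check using $U^TU = I_p$, $V^TV = I_p$, $Q^TQ = I_p$ and $U^TQ = 0$ (the last of which follows from $\Delta^\hor_U \in \Hor_U\St(n,p)$ together with the invertibility of $\Sigma$) gives $Y^TY = \cos^2\Sigma + \sin^2\Sigma = I_p$, confirming $Y \in \St(n,p)$.

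Under the hypothesis of distinct, nonzero singular values, the SVD factors of $(\Delta + t\tilde\Delta)^\hor_U$ depend smoothly on $t$ in a neighbourhood of $0$, and Algorithm~\ref{alg:SVDdiff} furnishes $\dot Q,\dot\Sigma,\dot V$. Hence the curve $Y(t) := UV(t)\cos\Sigma(t) + Q(t)\sin\Sigma(t)$ is a smooth curve in $\St(n,p)$ with $Y(0) = Y$ and $\Exp_P^\Gr(\Delta + t\tilde\Delta) = Y(t)Y(t)^T$. Since $\Sigma(t)$ is diagonal, it commutes with $\dot\Sigma(t)$, so $\frac{d}{dt}\cos\Sigma(t) = -\sin\Sigma(t)\dot\Sigma(t)$ and $\frac{d}{dt}\sin\Sigma(t) = \cos\Sigma(t)\dot\Sigma(t)$; the product rule then gives $\dot Y(0) = \Gamma$ as defined in the statement, and
\begin{equation*}
\D(\Exp_P^\Gr)_\Delta(\tilde\Delta) = \left.\tfrac{d}{dt}\right|_{t=0} Y(t)Y(t)^T = \Gamma Y^T + Y\Gamma^T,
\end{equation*}
which is \eqref{eq:GrassExpDiffStiefel}.

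For the horizontal lift \eqref{eq:derivative_Grassmannexp}, apply \eqref{eq:HorLiftToStiefel} to $\Delta_1 := \Gamma Y^T + Y\Gamma^T$ at the base point $Y$: using $Y^TY = I_p$ one finds $\Delta_1 Y = \Gamma + Y\Gamma^T Y$. To match the projector-based expression, differentiate the identity $Y(t)^T Y(t) = I_p$ at $t=0$ to obtain $\Gamma^T Y + Y^T \Gamma = 0$, i.e.\ $\Gamma \in T_Y\St(n,p)$; this gives $Y\Gamma^TY = -YY^T\Gamma$, whence $\Gamma + Y\Gamma^TY = (I_n - YY^T)\Gamma$, which coincides with $\Pi_{\Hor_Y\St}(\Gamma)$ from \eqref{eq:projRHorUSt}.

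The only delicate point is the smooth dependence of the SVD factors on $t$; the distinct-and-nonvanishing hypothesis on the singular values is precisely what guarantees this via the standard analytic perturbation theory applied to $(\Delta + t\tilde\Delta)^\hor_U ((\Delta + t\tilde\Delta)^\hor_U)^T$, and Algorithm~\ref{alg:SVDdiff} makes the derivatives explicit. Everything else is a routine application of the product and chain rules.
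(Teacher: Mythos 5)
Your proof is correct and follows essentially the same route as the paper: factor out the irrelevant $V(t)^T$, differentiate the Stiefel curve $Y(t)=UV(t)\cos\Sigma(t)+Q(t)\sin\Sigma(t)$, and push it through $\pi^\SG$ via the product rule. Your explicit verification that $\Gamma+Y\Gamma^TY=(I_n-YY^T)\Gamma$ using $\Gamma^TY+Y^T\Gamma=0$ is a detail the paper leaves implicit, but it is the same argument.
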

        \begin{proof}
            The curve $\gamma(t):=\Exp^\Gr_P(\Delta+t\widetilde{\Delta})$ on the Grassmannian is given by
            \begin{equation*}
                \gamma(t)=\pi^\SG\left( U V(t) \cos(\Sigma(t)) V(t)^T + Q(t) \sin(\Sigma(t))V(t)^T \right),
            \end{equation*}
            according to (\ref{eq:GrassmannExpSt}). Note that this is in general not a geodesic in $\Gr(n,p)$ but merely a curve through the endpoints of the geodesics from $P$ in direction $\Delta + t \widetilde{\Delta}$. That is to say, it is the mapping of the (non-radial) straight line $\Delta + t \widetilde{\Delta}$ in $T_P\Gr(n,p)$ to $\Gr(n,p)$ via the exponential map. The projection $\pi^\SG$ is not affected by the postmultiplication of $V(t)^T \in \O(p)$, because of the nature of the equivalence classes in $\St(n,p)$. Therefore we set
            \begin{equation*}
                \mu\colon [0,1] \to \St(n,p),\quad \mu(t):=U V(t) \cos(\Sigma(t)) + Q(t) \sin(\Sigma(t))
            \end{equation*}
            and have $\gamma(t)=\pi^\SG(\mu(t))$. The derivative of $\gamma$ with respect to $t$ evaluated at $t=0$ is then given by
            \begin{equation}
            \label{eq:GrassExpDiffStiefelproof}
             \begin{split}
                \frac{\D}{\D t}\Big\vert_{t=0} \gamma(t) = \frac{\D}{\D t}\Big\vert_{t=0} \pi^\SG(\mu(t)) =\D \pi^\SG_{\mu(0)}\left(\dot\mu(0)\right)
                = \dot\mu(0) \mu(0)^T + \mu(0) \dot\mu(0)^T.
             \end{split}
            \end{equation}                
            But with the definitions above, $Y=\mu(0)$ and $\Gamma=\dot\mu(0)$, so (\ref{eq:GrassExpDiffStiefelproof}) is equivalent to (\ref{eq:GrassExpDiffStiefel}).
            The horizontal lift of (\ref{eq:GrassExpDiffStiefelproof}) to $Y$ is according to (\ref{eq:HorLiftToStiefel}) given by a postmultiplication of $Y$, which shows (\ref{eq:derivative_Grassmannexp}).
            Note however that $\Gamma \in T_Y\St(n,p)$ is not necessarily horizontal, so $0 \neq\Gamma^TY \in \so(p)$.
        \end{proof}
   
In order to remove the ``mutually distinct singular values'' assumption of Proposition~\ref{prop:derivative_Grassmannexp} and to remedy the numerical instability of the SVD in the presence of clusters of singular values, we introduce an alternative computational approach that relies on the derivative of the QR-decomposition rather than that of the SVD. Yet in this case, the ``non-zero singular values'' assumption is retained, and instabilities may arise for matrices that are close to being rank-deficient.

Let $U, \Delta^\hor_U, \tilde\Delta^\hor_U$ be as introduced in Prop. \ref{prop:derivative_Grassmannexp} (now with possibly repeated singular values of $\Delta^\hor_U$)
and consider the $t$-dependent QR-decomposition of the matrix curve $(\Delta+t\tilde\Delta)^\hor_U=Q(t)R(t)$.
The starting point is \eqref{eq:GrassmannExpO}, which can be transformed to 
\[
    \gamma(t) = \pi^\SG\left((U, Q(t)) \exp_m\begin{pmatrix}
    0 & -R(t)^T\\
    R(t) & 0
    \end{pmatrix}
    \begin{pmatrix}
    I_{p}\\
    0
    \end{pmatrix}
    \right)
    =: \pi^\SG(\tilde{\gamma}(t))
\]
by means of elementary matrix operations.
Write $M(t) = \begin{psmallmatrix}0 & -R(t)^T\\ R(t) & 0\end{psmallmatrix}$.
By the product rule,
\begin{equation}
\label{eq:diffGr_aux}
 \frac{\D}{\D t}\Big\vert_{t=0} \tilde{\gamma}(t)
 =(0, \dot Q(0)) \exp_m\left(M(0)\right)\begin{pmatrix}I_p\\ 0\end{pmatrix}
  + (U,Q(0))\frac{\D}{\D t}\Big\vert_{t=0} \exp_m\left(M(t)\right)\begin{pmatrix}I_p\\ 0\end{pmatrix}.
\end{equation}
The derivative $\frac{\D}{\D t}\big\vert_{t=0} \exp_m\left(M(t)\right) = \D(\exp_m)_{M(0)}(\dot M(0))$
can be computed according to Mathias' Theorem \cite[Thm 3.6, p. 58]{Higham:2008:FM} from
\begin{align*}
 \label{eq:Mathias}
 \exp_m \begin{pmatrix} M(0) & \dot M(0)\\ 0 & M(0)\end{pmatrix}
 &=  \begin{pmatrix} \exp_m(M(0)) & \frac{\D}{\D t}\big\vert_{t=0} \exp_m(M(0) + t\dot M(0))\\ 0 & \exp_m(M(0))\end{pmatrix}\\
 &= \begin{pmatrix}
   \begin{pmatrix}
   E_{11} & E_{12}\\
   E_{21} & E_{22}
   \end{pmatrix}
   &
      \begin{pmatrix}
   D_{11} & D_{12}\\
   D_{21} & D_{22}
   \end{pmatrix}\\ 
   \mathbf{0}
   &
      \begin{pmatrix}
   E_{11} & E_{12}\\
   E_{21} & E_{22}
   \end{pmatrix}
  \end{pmatrix}
\end{align*}
which is a $(4p\times 4p)$-matrix exponential written in sub-blocks of size $(p\times p)$.
Substituting in \eqref{eq:diffGr_aux} gives the $\mathcal{O}(np^2)$-formula
\begin{equation}
 \label{eq:Grdiff_Mathias}
  \frac{\D}{\D t}\Big\vert_{t=0} \tilde{\gamma}(t)
  = \dot Q(0) E_{21} + UD_{11} + Q(0)D_{21}.
 \end{equation}
This corresponds to \cite[Lemma 5]{ZimmermannHermite_2020}, which addresses the Stiefel case. The derivative matrices $\dot Q(0), \dot R(0)$ can be obtained from Alg. \ref{alg:QRdiff} in Appendix \ref{app:diffQR}.
The final formula is obtained by taking the projection into account as in \eqref{eq:GrassExpDiffStiefelproof},
where $\mu$ is to be replaced by $\tilde{\gamma}$. The horizontal lift is computed accordingly.

    The derivative of the Grassmann exponential can also be computed directly in $\Gr(n,p)$ without using horizontal lifts, at the cost of a higher computational complexity, but without restrictions with regard to the singular values. The key is again to apply Mathias' Theorem to evaluate the derivative of the matrix exponential. Let $P \in \Gr(n,p)$ and $\Delta = [\Omega, P],\ \widetilde{\Delta}=[\widetilde{\Omega},P] \in T_P\Gr(n,p)$ with $\Omega=(I_n-2P)\Delta,\ \widetilde{\Omega}=(I_n-2P)\widetilde{\Delta} \in \so_P(n)$. Denote $Q:= \expm(\Omega) \in \O(n)$ and $\Psi Q =\frac{\D}{\D t}\big\vert_{t=0}\expm(\Omega + t\widetilde{\Omega})$. Here, $\Psi \in \so(n)$, since $\expm(\Omega + t\widetilde{\Omega})$ is a curve in $\O(n)$ through $Q$ at $t=0$. Then a computation shows that the derivative of
    \begin{equation*}
        \Exp^\Gr_P(\Delta + t\widetilde{\Delta})=\expm(\Omega+t\widetilde{\Omega})P\expm(-\Omega-t\widetilde{\Omega})
    \end{equation*}
    is given by
    \begin{equation*}
        \frac{\D}{\D t}\Big\vert_{t=0}\Exp^\Gr_P(\Delta + t\widetilde{\Delta})=\Psi Q P Q^T + Q P (\Psi Q)^T \in T_{QPQ^T}\Gr(n,p).
    \end{equation*}
    The matrices $Q$ and $\Psi Q$ can be obtained in one calculation by evaluating the left side of
    \begin{equation*}
        \expm\begin{pmatrix}
            \Omega & \widetilde{\Omega}\\
            0 & \Omega
        \end{pmatrix}
        =
        \begin{pmatrix}
        \expm(\Omega) & \frac{\D}{\D t}\Big\vert_{t=0}\expm(\Omega + t\widetilde{\Omega}) \\
        0 & \expm(\Omega)
        \end{pmatrix}
        =
        \begin{pmatrix}
         Q & \Psi Q \\
         0 & Q
        \end{pmatrix}
    \end{equation*} 
    according to Mathias' Theorem.
    
\subsection{Parallel Transport}
\label{subsec:ParallelTransport}
    On a Riemannian manifold $(M,g)$, \emph{parallel transport} of a tangent vector $v \in T_pM$ along a smooth curve $\gamma \colon I \to M$ through $p$ gives a smooth vector field $V \in \Vectorfields(\gamma)$ along $\gamma$ that is parallel with respect to the Riemannian connection $\nabla$ and fulfills the initial condition $V(p)=v$. A vector field $V \in \Vectorfields(\gamma)$ along a curve $\gamma$ is a vector field that is defined on the range of the curve, i.e., $V \colon \gamma(I) \to TM$ and $V(\gamma(t))\in T_{\gamma(t)}M$. The term ``parallel'' means that for all $t \in I$, the covariant derivative of $V$ in direction of the tangent vector of $\gamma$ vanishes, i.e.
    \begin{equation*}
        \nabla_{\dot{\gamma}(t)}V=0.
    \end{equation*} 
    
    Parallel transport on the Grassmannian (ONB perspective) was studied in \cite{EdelmanAriasSmith1999}, where an explicit formula for the horizontal lift of the parallel transport of a tangent vector along a geodesic was derived, and in \cite{AbsilMahonySepulchre2004}, where a differential equation for the horizontal lift of parallel transport along general curves was given. In the next proposition, we complete the picture by providing a formula for the parallel transport on the Grassmannian from the projector perspective. Note that this formula is similar to the parallel transport formula in the preprint \cite{lai2020}.
    
    \begin{proposition}[Parallel Transport: Projector Perspective]
    \label{prop:parallel_transport_projector}
        Let $P \in \Gr(n,p)$ and $\Delta, \Gamma \in T_P\Gr(n,p)$. Then the parallel transport $\mathbb{P}_\Delta(\Exp_P^\Gr(t\Gamma))$ of $\Delta$ along the geodesic 
        \begin{equation*}
            \Exp_P^\Gr(t\Gamma)=\expm(t[\Gamma,P])P\expm(-t[\Gamma,P]) 
        \end{equation*}
        is given by
        \begin{equation*}
            \mathbb{P}_\Delta(\Exp_P^\Gr(t\Gamma))=\expm(t[\Gamma,P])\Delta\expm(-t[\Gamma,P]).
        \end{equation*} 
        \begin{proof}
            Denote $\gamma(t):=\Exp_P^\Gr(t\Gamma)$ and note that $\Omega:=[\Gamma,P] \in \so_P(n)$. The fact that $\mathbb{P}_\Delta(\Exp_P^\Gr(t\Gamma)) \in T_{\Exp_P^\Gr(t\Gamma)}\Gr(n,p)$ can be checked with Proposition\nobreakspace\ref{prop:TangentVectorCharacterizations}\nobreakspace\ref{eq:DP+PD=D}. To show that $\mathbb{P}_\Delta$ gives parallel transport, we need to show that $\nabla_{\dot{\gamma}(t)}(\mathbb{P}_\Delta(\gamma(t)))=\Pi_{T_{\gamma(t)}\Gr}\left(\D (\mathbb{P}_\Delta)_{\gamma(t)}(\dot{\gamma}(t))\right)=0$ as in \eqref{eq:connectionGr}. By making use of the chain rule, we have $\D (\mathbb{P}_\Delta)_{\gamma(t)}(\dot{\gamma}(t))= \frac{\D}{\D t} \mathbb{P}_\Delta(\gamma(t))=[\Omega,\mathbb{P}_\Delta(\gamma(t))]$, where $[\cdot,\cdot]$ denotes the matrix commutator. Applying the projection $\Pi_{T_{\gamma(t)}\Gr}$ from  \eqref{eq:projSymTPGr} and making use of the relation \eqref{eq:soPn} and the tangent vector properties from Proposition\nobreakspace\ref{prop:TangentVectorCharacterizations} give the desired result.
        \end{proof}        
    \end{proposition}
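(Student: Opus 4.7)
The plan is to verify the three defining properties of parallel transport for the candidate field $V(t) := \expm(t\Omega)\Delta\expm(-t\Omega)$, where $\Omega := [\Gamma, P]$: namely, the initial condition $V(0) = \Delta$, the tangency $V(t) \in T_{\gamma(t)}\Gr(n,p)$ for all $t$, and the vanishing of the covariant derivative $\nabla_{\dot\gamma(t)} V(t)$. Proposition~\ref{prop:TangentVectorCharacterizations}\ref{prop:TangentVectorChar_d} gives $\Omega \in \so_P(n)$, so upon setting $Q(t) := \expm(t\Omega) \in \O(n)$ and using $Q(t)^T = \expm(-t\Omega)$, one has $\gamma(t) = Q(t) P Q(t)^T$ and $V(t) = Q(t)\Delta Q(t)^T$. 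The initial condition at $t = 0$ is immediate.

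For tangency, I would invoke Proposition~\ref{prop:TangentVectorCharacterizations}\ref{eq:DP+PD=D}. Since $\Delta P + P\Delta = \Delta$, conjugation by $Q(t)$ yields
\[
V(t)\gamma(t) + \gamma(t)V(t) = Q(t)(\Delta P + P\Delta)Q(t)^T = Q(t)\Delta Q(t)^T = V(t),
\]
which is exactly the tangency characterization applied at the point $\gamma(t)$.

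Next, since $\Omega$ commutes with $Q(t) = \expm(t\Omega)$, the chain rule gives
\[
\dot V(t) = \Omega V(t) - V(t)\Omega = [\Omega, V(t)],
\]
which is symmetric because $\Omega^T = -\Omega$ and $V(t)^T = V(t)$. Plugging this into the Riemannian connection formula~\eqref{eq:connectionGr} together with the orthogonal projection~\eqref{eq:projSymTPGr} reduces parallelism to the identity
\[
(I_n - \gamma(t))[\Omega, V(t)]\gamma(t) + \gamma(t)[\Omega, V(t)](I_n - \gamma(t)) = 0.
\]

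The main obstacle is verifying this last identity. Because $Q(t)$ commutes with $\Omega$ and conjugates $P$ into $\gamma(t)$, the left-hand side equals $Q(t)\,\Pi_{T_P\Gr}([\Omega, \Delta])\,Q(t)^T$, so it suffices to show $\Pi_{T_P\Gr}([\Omega, \Delta]) = 0$. In an orthonormal eigenbasis of $P$ in which $P = \begin{psmallmatrix} I_p & 0 \\ 0 & 0 \end{psmallmatrix}$, the characterization~\eqref{eq:GrassmannTangentvectorB} forces $\Delta$ and $\Gamma$ into off-diagonal block form, whence $\Omega = [\Gamma, P]$ is skew-symmetric with the same off-diagonal structure. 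A short block multiplication then reveals that $[\Omega, \Delta]$ is block-diagonal, and since $T_P\Gr(n,p)$ consists precisely of the symmetric matrices with vanishing diagonal blocks in this basis, the projection annihilates it. This delivers $\nabla_{\dot\gamma(t)} V(t) = 0$ and completes the verification.
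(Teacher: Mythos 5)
Your proof is correct and follows essentially the same route as the paper's: verify tangency via the characterization $\Delta P + P\Delta = \Delta$, compute $\dot V(t) = [\Omega, V(t)]$ using that $\Omega$ commutes with $\expm(t\Omega)$, and show that the tangential projection of this derivative vanishes. The only difference is that you carry out explicitly (by conjugating back to the base point and a block computation showing $[\Omega,\Delta]$ is block-diagonal, hence annihilated by $\Pi_{T_P\Gr}$) the final projection step that the paper compresses into one sentence; this is added detail, not a different method.
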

    Applying the horizontal lift to the parallel transport equation leads to the formula also found in \cite{EdelmanAriasSmith1999}. Let $Q=\begin{pmatrix} U & U_\perp \end{pmatrix} \in (\pi^\OG)^{-1}(P)$. Then $\Omega=Q\begin{psmallmatrix}0 & \smash{-A^T} \\ A & 0\end{psmallmatrix}Q^T$ and $\Delta=Q\begin{psmallmatrix}0 & \smash{B^T} \\ B & 0\end{psmallmatrix}Q^T$ for some $A,B \in \R^{(n-p)\times p}$.  According to \eqref{eq:HorLiftToStiefel}, the horizontal lift of $\mathbb{P}_\Delta(\Exp_P^\Gr(t\Gamma))$ to the Stiefel geodesic representative $U(t)=Q\expm(tQ^T\Omega Q)I_{n,p}$ is given by 
    a post-multiplication with $U(t)$, 
    \begin{equation*}
        \left(\mathbb{P}_\Delta(\Exp_P^\Gr(t\Gamma))\right)^\hor_{U(t)}=\mathbb{P}_\Delta(\Exp_P^\Gr(t\Gamma))U(t)
        = Q \expm\left(t \begin{pmatrix}0 & \smash{-A^T} \\ A & 0\end{pmatrix} \right) \begin{pmatrix}0 \\ B \end{pmatrix}.
    \end{equation*}
    This formula can be simplified similarly to \cite[Theorem 2.4]{EdelmanAriasSmith1999} by discarding all principal angles equal to zero. With notation as above, $\Gamma^\hor_U = U_\perp A$ and $\Delta^\hor_U = U_\perp B$. Let $r \leq \min(p,n-p)$ be the number of non-zero singular values of $\Gamma^\hor_U$. Denote the thin SVD of $\Gamma^\hor_U$ by $\Gamma^\hor_U=\hat{Q}\Sigma V^T$, where $\hat{Q} \in \St(n,r), \Sigma=\diag(\sigma_1, \dots, \sigma_r)$ and $ V \in \St(p,r)$, which means $\Sigma$ has full rank. Then $A = U_\perp^T\hat{Q}\Sigma V^T$ with $W:=U_\perp^T\hat{Q} \in \St(n-p,r)$. Similarly to the proof of Proposition\nobreakspace\ref{prop:GrassExp_Stiefel}, with $\gamma_\Gamma(t):=\Exp_P^\Gr(t\Gamma)$,
    \begin{equation}
        \label{eq:ParallelTransportHorLift}
    \begin{split}
        \left(\mathbb{P}_\Delta(\gamma_\Gamma(t))\right)^\hor_{U(t)} &= \left(-UV\sin(t\Sigma)W^T + U_\perp W \cos(t\Sigma)W^T + U_\perp(I_{n-p}-WW^T)\right)B\\
        &= (-UV\sin(t\Sigma)\hat{Q}^T + \hat{Q}\cos(t\Sigma)\hat{Q}^T + I_n-\hat{Q}\hat{Q}^T)\Delta^\hor_U.
    \end{split}
    \end{equation}
    The difference between this formula and the one found from \cite[Theorem 2.4]{EdelmanAriasSmith1999} is in the usage of the thin SVD and the therefore smaller matrices $\hat{Q},\Sigma$ and $V$, depending on the problem. But the first line also shows that if $r=n-p$, the term $I_{n-p}-WW^T$ vanishes, and therefore also the term $(I_n-\hat{Q}\hat{Q}^T)\Delta^\hor_U$. This can happen if $p\geq n/2$. \revcomm{For large $n$, \eqref{eq:ParallelTransportHorLift} allows for an $\mathcal{O}(np^2)$-computation of the parallel transport, which is efficient compared to the projector perspective of Proposition~\ref{prop:parallel_transport_projector}.}

\section{Symmetry and Curvature}
\label{sec:symcurve}
In this section, we establish the symmetric space structure of the Grassmann manifold by elementary means. The symmetric structure of the Grassmannian was for example shown in \cite[Vol. II]{KobayashiNomizu1996} and \cite{BorisenkoNikolaevskii1991}.

Exploiting the symmetric space structure, the curvature of the Grassmannian can be calculated explicitly. Curvature formulae for symmetric spaces can be found for example in \cite[\revcomm{Chapter 11, Proposition 11.31}]{ONeill1983} and \cite[Vol. II]{KobayashiNomizu1996}. To the best of the authors' knowledge, a first formula for the sectional curvature of the Grassmannian was given in \cite{Wong1968b}, without making use of the symmetric structure. The bounds were studied in \cite{WuChen1988}. In \cite{Leichtweiss1961}, curvature formulae have been derived in local coordinates via differential forms. Explicit curvature formulae for a generalized version of the Grassmannian as the space of orthogonal projectors were given in \cite{MachadoSalavessa1985}.

Curvature bounds are required for the analysis of Riemannian optimization problems \revcomm{(see, e.g., \cite{Alimisisetal2021,CriscitielloBoumal2023,ZhangSra2016})} and, in particular, for studying the Riemannian centers of mass, see for example \revcomm{\cite{AfsariTronVidal2013,boumal2014thesis}} and \cite{li2019}, and several references therein.
The sectional curvature features also in statistical problems on Riemannian manifolds \cite{Chakraborty2018}, and enables estimates for data processing errors on manifolds \cite{ZimmermannHermite_2020}.

\subsection{Symmetric Space Structure}
\label{sec:SymSpace}
    In differential geometry, a {\em metric symmetry at $q$} is an isometry $\sigma: M \to M$
    of a manifold $M$ that fixes a certain point $\sigma(q) = q$ with the additional property that $d\sigma_q = -\id|_{T_qM}$. This relates to the concept of a point reflection in Euclidean geometry.
    A (metric) {\em symmetric space} is a connected differentiable manifold that has a metric symmetry at every point, \cite[\revcomm{Chapter} 8]{ONeill1983}.  
    Below, we execute an explicit construction of symmetries for the Grassmannian, which compares to the 
    abstract course of action in \cite[\revcomm{Chapter} 11, p. 315ff]{ONeill1983}.

    Consider the orthogonal matrix
    $S_0 = \begin{psmallmatrix}
        I_p & 0\\
        0 & -I_{n-p}
    \end{psmallmatrix}\in \O(n)$.
    Then $S_0$ induces a symmetry at $P_0$ via $\sigma^{P_0}\colon P \mapsto P^{S_0} := S_0PS_0^T$,
    which is defined on all of $\Gr(n,p)$.
    Obviously, $\sigma^{P_0}(P_0) = P_0$.
    For any point $P\in \Gr(n,p)$ and any tangent vector $\Delta \in T_P\Gr(n,p)$,
    the differential in direction $\Delta$ can be computed as 
    $\D\sigma^{P_0}_P(\Delta)= \frac{\D}{\D t}|_{t=0} \sigma(P(t))$, where $P(t)$ is any curve on $\Gr(n,p)$ with $P(0) = P$ and $\dot{P}(0)=\Delta$. This gives 
    \[
    \D\sigma_{P_0}^{P_0}\colon T_{P_0}\Gr(n,p)\to T_{P_0}\Gr(n,p), \quad
    \begin{pmatrix}
        0 & B^T\\
        B &0
    \end{pmatrix} \mapsto 
    S_0\begin{pmatrix}
        0 & B^T\\
        B &0
    \end{pmatrix}S_0^T = - 
    \begin{pmatrix}
        0 & B^T\\
        B &0
    \end{pmatrix},
    \]
    so that  $\sigma^{P_0}$ is indeed a symmetry of $\Gr(n,p)$ at $P_0$.

    Given any other point $P\in \Gr(n,p)$, we can compute the EVD $P = QP_0Q^T$
    and define 
    $\sigma^P: \tilde{P}\mapsto (QS_0Q^T) \tilde{P} (QS_0Q^T)$.
    This isometry fixes $P$,  $\sigma^P(P) = P$. 
    Moreover, for any curve with $P(0) = P$, $\dot{P}(0) = \Delta\in T_P\Gr(n,p)$, it holds
    $\Delta = \frac{\D}{\D t}|_{t=0} Q(t)P_0Q^T(t) = \dot{Q}P_0Q^T + Q P_0 \dot{Q}^T$ (evaluated at $t=0$).
    Since $Q(t)$ is a curve on $O(n)$, it holds $Q^T\dot{Q} = - \dot{Q}^TQ$, so that
    $
    Q^T\dot{Q} = 
    \begin{psmallmatrix}
        C_{11} & -C_{21}^T\\
        C_{21} & C_{22}
    \end{psmallmatrix}
    $ is skew.
    As a consequence, we use the transformation $Q^T\Delta Q =  \begin{psmallmatrix}
        0      &  C_{21}^T\\
        C_{21} & 0
    \end{psmallmatrix}$ to move $\Delta$ to the tangent space at $P_0$ and compute
    \[
    \D\sigma^P_P(\Delta) = QS_0(Q^T\Delta Q)S_0Q^T = QS_0  \begin{pmatrix}
        0      &  C_{21}^T\\
        C_{21} & 0
    \end{pmatrix}  S_0Q^T
    = - Q(Q^T\Delta Q) Q^T =-\Delta.
    \]
    Hence, we have constructed metric symmetries at every point of $\Gr(n,p)$.

    The symmetric space structure of $\Gr(n,p)$ implies a number of strong properties.
    First of all, it follows that $\Gr(n,p)$ is {\em geodesically complete} \cite[\revcomm{Chapter} 8, Lemma 20]{ONeill1983}. 
    This means that the maximal domain of definition for all Grassmann geodesics is the whole real line $\R$.
    As a consequence, all the statements of the Hopf-Rinow Theorem \cite[Chap. 7, Thm 2.8]{DoCarmo2013riemannian}, \cite[Thm 2.9]{Alexandrino2015} hold for the Grassmannian\revcomm{, as it is a connected manifold}:
    \begin{enumerate}
     \item The Riemannian exponential $\Exp_P^\Gr:T_P\Gr(n,p) \to \Gr(n,p)$ is globally defined.
     \item $(\Gr(n,p), \dist(\cdot,\cdot))$ is a complete metric space, where $\dist(\cdot,\cdot)$ is the Riemannian distance function.
     \item Every closed and bounded set in $\Gr(n,p)$ is compact.
    \end{enumerate}
    These statements are equivalent. Any one of them additionally implies
    \begin{enumerate}
    \setcounter{enumi}{3}
     \item For any two points $P_1,P_2\in \Gr(n,p)$, 
     there exists a geodesic $\gamma$ of length $L(\gamma) = \dist(P_1,P_2)$ that joins $P_1$ to $P_2$; hence
     any two points can be joined by a {\em minimal} geodesic segment. 
     \item The exponential map $\Exp_P^\Gr:T_P\Gr(n,p) \to \Gr(n,p)$ is surjective for all $P\in \Gr(n,p)$.
    \end{enumerate}

\subsection{Sectional Curvature}
\label{subsec:SectionalCurvature}   
    For  $X,Y,Z \in \R^{(n-p)\times p}$, let $\hat{X}:=\begin{psmallmatrix} 0 & \smash{-}X^T \\ X & 0\end{psmallmatrix} \in 
    \Hor_I\O(n)$ and $\hat{Y},\hat{Z} \in \Hor_I\O(n)$ accordingly. Denote the projections to $T_{P_0}\Gr(n,p)$ by $x:=\D \pi^\OG_{P_0}(\hat{X})= \begin{psmallmatrix} 0 & X^T \\ X & 0\end{psmallmatrix} \in T_{P_0}\Gr(n,p)$, etc. Then, by \cite[Proposition 11.31]{ONeill1983}, the curvature tensor at $P_0$ is given by $R_{xy}z=\D \pi^\OG_{P_0}([\hat{Z},[\hat{X},\hat{Y}]])$, since the Grassmannian is symmetric and therefore also reductive homogeneous. This formula coincides with the formula found in \cite{MachadoSalavessa1985}. Explicitly, we can calculate
    \begin{equation*}
        R_{xy}z=\begin{pmatrix} 0 & B^T \\ B & 0\end{pmatrix} \in T_{P_0}\Gr(n,p),
    \end{equation*}
    where $B=Z X^T Y - Z Y^T X - X Y^T Z + Y X^T Z \in \R^{(n-p) \times p}$.
    
    The sectional curvature of the Grassmannian can be calculated by the following formulae. It depends only on the plane spanned by two given tangent vectors, not the spanning vectors themselves. For a Riemannian manifold, the sectional curvature completely determines the curvature tensor, see for example \cite[Proposition 8.31]{Lee2018riemannian}.
    
    \begin{proposition}
    Let $P\in \Gr(n,p)$ and let $\Delta_1,\Delta_2 \in T_P \Gr(n,p)$ span a non-degenerate plane in $T_P\Gr(n,p)$. The sectional curvature is then given by
    \begin{equation}
    \label{eq:seccurvDelta}
        K_P(\Delta_1,\Delta_2)=4\frac{\tr\mathopen{}\left(\Delta_1^2\Delta_2^2\right) - \tr\mathopen{}\left((\Delta_1\Delta_2)^2\right)}{\tr(\Delta_1^2)\tr(\Delta_2^2)-(\tr(\Delta_1\Delta_2))^2}
        =
        2\frac{\|[\Delta_1, \Delta_2]\|_F^2}{\|\Delta_1\|_F^2\|\Delta_2\|_F^2 - \langle \Delta_1,\Delta_2\rangle_0^2}.
    \end{equation}
    \begin{proof}
    This formula can be derived from the result in \cite{MachadoSalavessa1985}. For a direct proof, we proceed as follows. 
    The tangent vectors can be expressed as $\Delta_1=[\Omega_1,P],\ \Delta_2=[\Omega_2,P] \in T_P\Gr(n,p)$ for some $\Omega_1,\Omega_2 \in \so_P(n)$. Using the fact
    \begin{equation*}
        [\Omega_1,P][\Omega_2,P]=-\Omega_1\Omega_2,
    \end{equation*}
    we see that
    \begin{equation*}
        \tr\mathopen{}\left([\Omega_2,P][\Omega_1,P]^2[\Omega_2,P]\right) - \tr\mathopen{}\left(([\Omega_2,P][\Omega_1,P])^2\right)=\tr\mathopen{}\left(\Omega_2\Omega_1[\Omega_1,\Omega_2]\right).
    \end{equation*}
    The property that for any two $X,Y \in \Hor_I\O(n)
    $ the equality
    \begin{equation*}
        \tr(YX[X,Y])=\SP{[Y,[X,Y]],X}
    \end{equation*}
    holds, shows the claim according to \cite[Proposition 11.31]{ONeill1983}.
    \end{proof}
    \end{proposition}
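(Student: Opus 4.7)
The plan is to evaluate the numerator $g_P^\Gr(R(\Delta_1,\Delta_2)\Delta_2,\Delta_1)$ and the denominator $g_P^\Gr(\Delta_1,\Delta_1)g_P^\Gr(\Delta_2,\Delta_2) - g_P^\Gr(\Delta_1,\Delta_2)^2$ of the sectional curvature directly, using the O'Neill-type curvature formula $R_{xy}z = \D\pi^\OG_{P_0}([\hat{z},[\hat{x},\hat{y}]])$ established just above. The formula is stated at $P_0$, but $\O(n)$-equivariance of both the metric and the curvature tensor (a consequence of $\Gr(n,p)$ being a Riemannian homogeneous space under $\O(n)$) extends it to a general $P = QP_0Q^T$ by conjugating all quantities with $Q$; equivalently, the computation can be carried out entirely in terms of representatives $\Omega_i \in \so_P(n)$ at the general base point $P$.

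Writing $\Delta_i = [\Omega_i,P]$ with $\Omega_i \in \so_P(n)$ as in Proposition~\ref{prop:TangentVectorCharacterizations}\ref{prop:TangentVectorChar_d}, two identities do all the algebraic work. The first, $[\Omega_1,P][\Omega_2,P] = -\Omega_1\Omega_2$ (immediate from $\Omega_i P + P\Omega_i = \Omega_i$), lets one trade products of $\Delta_i$'s for products of $\Omega_i$'s; in particular, $g_P^\Gr(\Delta_i,\Delta_j) = \tfrac{1}{2}\tr(\Omega_i^T\Omega_j) = g_I^\O(\Omega_i,\Omega_j)$, so the denominator of $K_P$ equals $\tfrac{1}{4}[\tr(\Delta_1^2)\tr(\Delta_2^2) - \tr(\Delta_1\Delta_2)^2]$. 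The second is the adjoint identity $\tr(YX[X,Y]) = g^\O([Y,[X,Y]],X)$ for $X,Y \in \Hor_I\O(n)$, which follows from cyclicity of trace together with skew-symmetry and which converts the O'Neill bracket pairing for the curvature tensor into a pure trace expression. Combining the two identities yields
\begin{equation*}
g_P^\Gr(R(\Delta_1,\Delta_2)\Delta_2,\Delta_1) \;=\; \tr(\Omega_2\Omega_1[\Omega_1,\Omega_2]) \;=\; \tr(\Delta_1^2\Delta_2^2) - \tr((\Delta_1\Delta_2)^2),
\end{equation*}
and dividing by the denominator and cancelling the factor $\tfrac{1}{4}$ produces the leading $4$ in the first expression of~\eqref{eq:seccurvDelta}.

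The equivalence of the two expressions in~\eqref{eq:seccurvDelta} is then purely algebraic. Since each $\Delta_i$ is symmetric, $\|\Delta_i\|_F^2 = \tr(\Delta_i^2)$ and $\langle\Delta_1,\Delta_2\rangle_0 = \tr(\Delta_1\Delta_2)$; since the commutator of two symmetric matrices is skew, $\|[\Delta_1,\Delta_2]\|_F^2 = -\tr([\Delta_1,\Delta_2]^2)$, which expands via cyclicity to $2\tr(\Delta_1^2\Delta_2^2) - 2\tr((\Delta_1\Delta_2)^2)$, matching the numerator up to the factor $2$ that appears outside. The main obstacle in this route is not conceptual but combinatorial: there is a factor $\tfrac{1}{2}$ hidden in the Grassmann metric, several sign flips arising from the skew-symmetry of the $\Omega_i$'s, and a sign convention inherent in the O'Neill formula; verifying that these combine to yield exactly the factor $4$ in the numerator is the one point where one must slow down rather than proceed mechanically.
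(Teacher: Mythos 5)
Your proposal is correct and follows essentially the same route as the paper: writing $\Delta_i=[\Omega_i,P]$, using the identity $[\Omega_1,P][\Omega_2,P]=-\Omega_1\Omega_2$ to convert traces of $\Delta$'s into traces of $\Omega$'s, and invoking the adjoint identity $\tr(YX[X,Y])=\SP{[Y,[X,Y]],X}$ together with O'Neill's curvature formula for the symmetric/reductive homogeneous structure. Your bookkeeping of the factor $\tfrac12$ in the metric (yielding the $4$ in the numerator) and the verification of the second, Frobenius-norm form of the formula are both consistent with the paper.
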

    
    With (\ref{eq:GrassmannTangentvectorUBUT}) every $\Delta_i \in T_P\Gr(n,p)$ can be written as $\Delta_i=U_\perp B_i U^T+U B_i^T U_\perp^T$ for some $ \begin{pmatrix} U & U_\perp \end{pmatrix} \in (\pi^\OG)^{-1}(P)$ and $B_i \in \R^{(n-p)\times p}$. Since every tangent vector in $T_P\Gr(n,p)$ is uniquely determined by such a $B$ for a chosen representative $\begin{pmatrix} U & U_\perp \end{pmatrix}$, we can insert this into (\ref{eq:seccurvDelta}) and get the simplified formula
    \begin{equation}
    \label{eq:CurvB}
    \begin{split}
        K_P(B_1,B_2)&=\frac{\tr\mathopen{}\left(B_1^TB_2\left(B_2^T B_1-2B_1^TB_2\right)+B_1^TB_1B_2^TB_2\right)}{\tr\mathopen{}\left(B_1^TB_1\right)\tr\mathopen{}\left(B_2^TB_2\right)-\left(\tr\mathopen{}\left(B_1^TB_2\right)\right)^2}\\
        &=
        \frac{\|B_2^TB_1\|_F^2 + \|B_1B_2^T\|_F^2 - 2\langle B_2^TB_1, B_1^TB_2\rangle_0}
        {\|B_1\|_F^2 \|B_2\|_F^2 - \langle B_1,B_2\rangle_0^2}.
    \end{split}
    \end{equation}
    This formula is equivalent to the slightly more extended form in \cite{Wong1968b} and depends only on the factors $B_1^TB_2$, $B_1^TB_1$ and $B_2^TB_2 \in \R^{p \times p}$. It also holds for the horizontal lifts  of $\Delta_i$ by just replacing the symbols $B_i$ by $(\Delta_i)^\hor_U$, which can also be shown by exploiting \eqref{eq:GrassmannTangentvectorUBUT} and $(\Delta_i)^\hor_U=U_\perp B_i$.
    
    In summary, for two orthonormal tangent vectors $\Delta_1=[\Omega_1,P],\ \Delta_2=[\Omega_2,P] \in T_P\Gr(n,p)$ with $\Omega_1,\Omega_2 \in \so_P(n)$, i.e.\revcomm{,}
    \begin{equation*}
        1=\SP{\Delta_i,\Delta_i}=\frac{1}{2}\tr(\Delta_i^T\Delta_i) \text{ and } 0=\SP{\Delta_1,\Delta_2}
    \end{equation*}
    the sectional curvature is given by
    \begin{equation*}
    \begin{split}
        K_P(\Delta_1,\Delta_2)&=\tr(\Omega_2\Omega_1[\Omega_1,\Omega_2])\\
        &= \tr\left(\Delta^{\hor^T}_{2,U}\Delta^\hor_{1,U}\left(\Delta^{\hor^T}_{1,U}\Delta^\hor_{2,U}-2\Delta^{\hor^T}_{2,U}\Delta^\hor_{1,U}\right)+ \Delta^{\hor^T}_{1,U}\Delta^\hor_{1,U}\Delta^{\hor^T}_{2,U}\Delta^\hor_{2,U}\right).
    \end{split}
    \end{equation*}
    Inserting any pair of orthonormal tangent vectors shows that for $n>2$, the sectional curvature of the real projective space $\Gr(n,1)=\RP^{n-1}$ is constant $K_P\equiv 1$, as it is by the same calculation for $\Gr(n,n-1)$, see also \cite{Wong1968b}. The same source also states a list of facts about the sectional curvature on $\Gr(n,p)$ without proof, especially that
    \begin{equation}
    \label{eq:sectionalcurvaturebounds}
        0 \leq K_P(\Delta_1,\Delta_2) \leq 2
    \end{equation}
    for $\min(p,n-p) \geq 2$. Nonnegativity follows directly from \eqref{eq:seccurvDelta}. The upper bound was proven in \cite{WuChen1988}, by proving that for any two matrices $A,B \in \R^{m \times n}$, with $m,n \geq 2$, the inequality
    \begin{equation}
        \norm{AB^T-BA^T}_F^2\leq 2\norm{A}_F^2\norm{B}_F^2
    \end{equation}
    holds. Note that \eqref{eq:CurvB} can be rewritten as
    \begin{equation*}
        K_P(B_1,B_2)=\frac{\frac12\left(\norm{B_1B_2^T-B_2B_1^T}_F^2+\norm{B_1^TB_2-B_2^TB_1}_F^2\right)}{\norm{B_1}_F^2\norm{B_2}_F^2-(\tr(B_1^TB_2))^2}.
    \end{equation*} 
    The bounds of the sectional curvature \eqref{eq:sectionalcurvaturebounds} are sharp for all cases except those mentioned in the next paragraph: The lower bound zero is attained whenever $\Delta_1,\Delta_2$ commute.
    The upper curvature bound is attained, e.g., for 
    $B_1 = \begin{psmallmatrix}
     1&1\\
     -1&1
     \end{psmallmatrix},\
     B_2 = \begin{psmallmatrix}
     -1&1\\
     -1&-1
     \end{psmallmatrix}
    $, or matrices containing $B_1$ and $B_2$ as their top-left block and else only zeros, when $p>2$.

    In \cite{Leichtweiss1961} it was shown that a Grassmannian $\Gr(n,p)$ 
    features a strictly positive sectional curvature $K_P$ only if the sectional curvature is constant throughout. The sectional curvature is constant (and equal to $K_P \equiv 1$) only in the cases $p=1,\ n>2$ or $p=n-1,\ n>2$. In the case of $n=2,\ p=1$, the sectional curvature is not defined, since $\dim(\Gr(2,1))=1$.
    Hence, in this case, there are no non-degenerate two-planes in the tangent space.
    
\section{Cut Locus and Riemannian Logarithm}
\label{sec:CutLocusLogarithm}
    We have seen in Section~\ref{sec:SymSpace} that $\Gr(n,p)$ is a complete Riemannian manifold. On such manifolds, the \emph{cut locus} of a point $P$ consists of those points $F$ beyond which the geodesics starting at $P$ cease to be length-minimizing. It is known~\cite[Ch.~III, Prop. 4.1]{sakai1996riemannian} that $P$ and $F$ are in each other's cut locus if there is more than one shortest geodesic from $P$ to $F$. We will see that, on the Grassmannian, this ``if'' is an ``if and only if'' \revcomm{(in other words, the Grassmannian does not admit singular cut points in the sense of~\cite{Bishop77})}, and moreover ``more than one'' is always either two or infinitely many.

    To get an intuitive idea of the cut locus, think of the earth as an ideal sphere. Then the cut locus of the north pole is the south pole, as it is the only point beyond which the geodesics starting at the north pole cease to be length-minimizing. In the case of the sphere, the ``if and only if'' statement that we just mentioned for the Grassmannian also holds; however, for the sphere, ``more than one'' is always infinitely many.

    Given two points $P, F \in \Gr(n,p)$ that are not in each other's cut locus, the unique smallest \revcomm{norm} tangent vector $\Delta\in T_P \Gr(n,p)$ such that $\Exp^\Gr_P(\Delta)=F$ is called the \emph{Riemannian logarithm} of $F$ at $P$. We propose an algorithm that calculates the Riemannian logarithm. Moreover, in the case of cut points, the algorithm is able to return any of the (two or infinitely many) smallest $\Delta\in T_P \Gr(n,p)$ such that $\Exp^\Gr_P(\Delta)=F$. This ability comes from the indeterminacy of the SVD operation invoked by the algorithm.

    The horizontal lift of the exponential map (\ref{eq:GrassmannExpSt}) depends explicitly on the so called \emph{principal angles} between two points and allows us to give explicit formulae for different geodesics between $P$ and a cut point $F$. We observe that the inherent ambiguity of the SVD, see Appendix \ref{app:MatrixBasics}, corresponds to the different geodesics connecting the same points.

    Our approach allows data processing schemes to explicitly map any given set of points on the Grassmannian to any tangent space $T_P\Gr(n,p)$, with the catch that possibly a subset of the points (namely those that are in the cut locus of $P$), is mapped to a set of tangent vectors each, instead of just a single one.
    
    The cut locus, and the related injectivity radius, play an important role in curve fitting methods on manifolds \cite{gousenbourger2019} and the analysis of Riemannian optimization problems \cite{AfsariTronVidal2013}. The ability to tackle cut points numerically is of special importance for computing so-called \emph{almost gradients}, which enable the computation of Riemannian barycenters for not necessarily localized point sets, see \cite[Section 6.2]{AfsariTronVidal2013}.
\subsection{Cut Locus}
    We can introduce the cut locus of the Grassmannian by applying the definitions of \cite[Chap. 10]{Lee2018riemannian} about cut points to $\Gr(n,p)$. In the following, let $P \in \Gr(n,p)$ and $\Delta \in T_P\Gr(n,p)$ and $\gamma_\Delta \colon t \mapsto \Exp^\Gr_P(t\Delta)$. Then the \emph{cut time of $(P,\Delta)$} is defined as
    \begin{equation*}
        t_\cut(P,\Delta) := \sup\{ b > 0 \mid \text{the restriction of $\gamma_\Delta$ to $[0,b]$ is minimizing}\}.
    \end{equation*}
    The \emph{cut point of $P$ along $\gamma_\Delta$} is given by $\gamma_\Delta(t_\cut(P,\Delta))$ and the \emph{cut locus of $P$} is defined as
    \begin{equation*}
        \Cut_P:=\{F \in \Gr(n,p) \mid F = \gamma_\Delta(t_\cut(P,\Delta)) \text{ for some } \Delta \in T_P\Gr(n,p)\}.
    \end{equation*} 
    In \cite{Wong1967,Sakai1977}, it is shown that the cut locus of $P = UU^T \in \Gr(n,p)$ is the set of all (projectors onto) subspaces with at least one direction orthogonal to all directions in the subspace onto which $P$ projects, i.e.
        \begin{equation}
        \label{eq:Cut_P}
            \Cut_P=\left\lbrace F= YY^T \in \Gr(n,p) \ \middle|\ \rank(U^TY) < p \right\rbrace.
        \end{equation} 
    This means that the cut locus can be described in terms of \emph{principal angles}: The principal angles $\theta_1,\ldots,\theta_p\in[0,\frac{\pi}{2}]$ between two subspaces $\Space{U}$ and $\Space{\widetilde{U}}$ are defined 
    recursively by
    \[
    \cos(\theta_k) := u_k^Tv_k := 
        \max_{\begin{array}{l} u\in \Space{U}, \|u\|=1\\
        u\bot u_1,\ldots, u_{k-1}
        \end{array}
        }
        \max _{\begin{array}{l} v\in \Space{\widetilde{U}}, \|v\|=1\\
          v\bot v_1,\ldots v_{k-1}
         \end{array}
        }
    u^Tv.
    \]
    They can be computed via 
    $\theta_k := \arccos(s_k) \in [0,\frac{\pi}{2}]$,
    where $s_k\leq 1$ is the $k$-largest singular value of $U^T\tilde{U}\in\R^{p\times p}$
    for any two Stiefel representatives $U$ and $\tilde{U}$.
    According to this definition, the principal angles are listed in ascending order: $0\leq\theta_1\leq \ldots\leq\theta_p\leq\frac{\pi}{2}$.
    In other words, the cut locus of $P$ consists of all points $F \in \Gr(n,p)$ with at least one principal angle between $P$ and $F$ being equal to $\frac{\pi}{2}$.
    
    Furthermore, as in \cite[\revcomm{Chapter 10, p. 310}]{Lee2018riemannian}, we introduce the \emph{tangent cut locus of $P$} by
    \begin{equation*}
        \TCL_P:=\{\Delta \in T_P\Gr(n,p) \mid \norm{\Delta}=t_\cut(P,\Delta/\norm{\Delta})\}
    \end{equation*}
    and the \emph{injectivity domain of $P$} by
    \begin{equation*}
        \ID_P:=\{ \Delta \in T_P\Gr(n,p) \mid \norm{\Delta}<t_\cut(P,\Delta/\norm{\Delta})\}.
    \end{equation*}
    The cut time can be explicitly calculated by the following proposition.
    \begin{proposition}
        Let $P=UU^T \in \Gr(n,p)$ and $\Delta \in T_P\Gr(n,p)$. Denote the largest singular value of $\Delta^\hor_U \in \Hor_U\St(n,p)$ by $\sigma_1$. Then
        \begin{equation}
            t_\cut(P,\Delta)=\frac{\pi}{2\sigma_1}.
        \end{equation}
        \begin{proof}
            Since $\gamma_\Delta(t_\cut(P,\Delta))\in \Cut_P$, by \eqref{eq:GrassmannExpSt} we have
            \begin{equation*}
                \rank(U^T(UV\cos(t_\cut(P,\Delta)\Sigma)V^T + \hat{Q}\sin(t_\cut(P,\Delta)\Sigma)V^T))<p,
            \end{equation*}
            which is equivalent to $\cos(t_\cut(P,\Delta)\sigma_1)=0$.
        \end{proof}
    \end{proposition}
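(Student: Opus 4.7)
The plan is to identify $t_\cut(P,\Delta)$ as the first positive time at which the geodesic $\gamma_\Delta$ enters the cut locus $\Cut_P$, and then use the explicit exponential formula together with the rank characterization \eqref{eq:Cut_P} to pinpoint that time. Since $\Gr(n,p)$ is a complete Riemannian manifold (established in Section~\ref{sec:SymSpace}), standard Riemannian geometry guarantees that the cut point $\gamma_\Delta(t_\cut(P,\Delta))$ lies in $\Cut_P$ and that $\gamma_\Delta$ is strictly minimizing on $[0,t_\cut(P,\Delta))$, so $\gamma_\Delta(t) \notin \Cut_P$ for $t < t_\cut(P,\Delta)$. Hence
\begin{equation*}
    t_\cut(P,\Delta) = \inf\{t>0 \mid \gamma_\Delta(t) \in \Cut_P\},
\end{equation*}
and the task reduces to finding the smallest positive $t$ for which this rank condition fails.

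First I would write $\Delta^\hor_U = \hat{Q}\Sigma V^T$ as a thin SVD with $\hat Q \in \St(n,r)$, noting that since $\Delta^\hor_U \in \Hor_U\St(n,p)$ satisfies $U^T\Delta^\hor_U = 0$ by \eqref{eq:HorUSt}, we have $U^T\hat Q = 0$. By Proposition~\ref{prop:GrassExp_Stiefel}, a Stiefel representative for $\gamma_\Delta(t) = \Exp_P^{\Gr}(t\Delta)$ is
\begin{equation*}
    Y(t) = UV\cos(t\Sigma)V^T + \hat{Q}\sin(t\Sigma)V^T + UV_\perp V_\perp^T.
\end{equation*}
Premultiplying by $U^T$ and using $U^TU = I_p$ together with $U^T\hat Q = 0$ yields
\begin{equation*}
    U^TY(t) = V\cos(t\Sigma)V^T + V_\perp V_\perp^T,
\end{equation*}
which is a symmetric matrix whose eigenvalues are $\cos(t\sigma_1),\dots,\cos(t\sigma_r)$ together with $p-r$ eigenvalues equal to $1$ (corresponding to the complementary block spanned by $V_\perp$).

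Combining these observations with the characterization \eqref{eq:Cut_P}, $\gamma_\Delta(t) \in \Cut_P$ if and only if some eigenvalue of $U^TY(t)$ vanishes, i.e.\ if and only if $\cos(t\sigma_i) = 0$ for some $i \in \{1,\dots,r\}$. The smallest positive $t$ solving this is obtained by taking the largest singular value $\sigma_1$ and the smallest root of cosine, giving $t = \tfrac{\pi}{2\sigma_1}$. Therefore
\begin{equation*}
    t_\cut(P,\Delta) = \frac{\pi}{2\sigma_1},
\end{equation*}
as claimed. I expect the main subtle point to be the justification that the cut time equals the \emph{first} entry into $\Cut_P$ (rather than merely satisfying $\gamma_\Delta(t_\cut) \in \Cut_P$); this can either be invoked from standard results on complete Riemannian manifolds or verified directly by constructing, for any $t > \tfrac{\pi}{2\sigma_1}$, a shorter broken geodesic through the cut point, exploiting the non-uniqueness of minimizing geodesics from $P$ to points in $\Cut_P$.
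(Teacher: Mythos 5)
Your proposal is correct and follows essentially the same route as the paper's proof: apply the rank characterization \eqref{eq:Cut_P} of the cut locus to the Stiefel representative of $\Exp_P^\Gr(t\Delta)$, reduce to $\cos(t\sigma_i)=0$, and take the first positive root, which is governed by the largest singular value $\sigma_1$. You are in fact somewhat more careful than the paper in two places it leaves implicit---justifying that $t_\cut$ is the \emph{first} entry time into $\Cut_P$ and spelling out the eigenvalues of $U^TY(t)$---so no gap remains.
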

    Now we see that the tangent cut locus $\TCL_P$ consists of those tangent vectors for which $\sigma_1$ (the largest singular value of the horizontal lift) fulfills $\sigma_1=\frac{\pi}{2}$ and the injectivity domain $\ID_P$ contains the tangent vectors with $\sigma_1<\frac{\pi}{2}$. 
    
    The \emph{geodesic distance} is a natural notion of distance between two points on a Riemannian manifold. It is defined as the length of the shortest curve(s) between two points as measured with the Riemannian metric, if such a curve exists. On the Grassmannian, it can be calculated as the two-norm of the vector of principal angles between the two subspaces, cf. \cite{Wong1967}, i.e.
    \begin{equation}
    \label{eq:distGrass}
        \dist(\Space{U},\Space{\widetilde{U}}) = \left(\sum_{i=1}^p \sigma_i^2\right)^{\frac{1}{2}}.
    \end{equation}
    This shows that for any two points on the Grassmann manifold $\Gr(n,p)$, the geodesic distance is bounded by 
    \begin{equation*}
        \dist(\Space{U},\Space{\widetilde{U}})\leq \sqrt{p}\frac{\pi}{2},                                                                               
    \end{equation*}
    which was already stated in \cite[Theorem 8]{Wong1967}. 
    
     \begin{remark}
      There are other notions of distance on the Grassmannian that can also be computed from the principal angles, but which are not equal to the geodesic distance, see \cite[\S 4.5]{EdelmanAriasSmith1999}, \cite{QiuZhangLi2005},  \cite[Table 2]{YeLim2016}. In the latter reference, 
     it is also shown that all these distances can be generalized to subspaces of different dimensions by introducing Schubert varieties and adding $\frac{\pi}{2}$ for the ``missing'' angles.
     \end{remark}

    The \emph{injectivity radius} at $P \in \Gr(n,p)$ is defined as the distance from $P$ to its cut locus, or equivalently, as the supremum of the radii $r$ for which $\Exp_P^\Gr$ is a diffeomorphism from the open ball $B_r(0) \subset T_P\Gr(n,p)$ onto its image. The injectivity radius at every $P$ is equal to $\inj(P)=\frac{\pi}{2}$, since there is always a subspace $F$ for which the principal angles between $P$ and $F$ are all equal to zero, except one, which is equal to $\frac{\pi}{2}$. For such an $F$ it holds that $\dist(P,F)=\frac{\pi}{2}$, c.f. \eqref{eq:distGrass}, and $F \in \Cut_P$. For all other points $\widetilde{F}$ with $\dist(P,\widetilde{F})<\frac{\pi}{2}$, all principal angles are strictly smaller than $\frac{\pi}{2}$, and therefore $\widetilde{F} \notin \Cut_P$.

    \begin{proposition}
        Let $P= UU^T \in \Gr(n,p)$ and $\Delta \in T_P\Gr(n,p)$. Consider the geodesic segment $\gamma_\Delta\colon [0,1] \ni t \mapsto \Exp^\Gr_P(t\Delta)$. Let the SVD of the horizontal lift of $\Delta$ be given by $\hat{Q}\Sigma V^T = \Delta^\hor_U \in \Hor_U\St(n,p)$, where $\Sigma = \diag(\sigma_1,\dots,\sigma_p)$.
        \begin{enumerate}[label=\alph*)]
         \item \label{prop:cutlocus_1} If the largest singular value $\sigma_1<\pi/2$, then the geodesic segment $\gamma_\Delta$ is unique minimizing.
         \item \label{prop:cutlocus_2} If the largest singular value $\sigma_1=\pi/2$, then the geodesic segment $\gamma_\Delta$ is non-unique minimizing.
         \item \label{prop:cutlocus_3} If the largest singular value $\sigma_1>\pi/2$, then the geodesic segment $\gamma_\Delta$ is not minimizing.
        \end{enumerate}
        \begin{proof}
            In case of \ref{prop:cutlocus_1}, $\gamma_\Delta$ is minimizing by definition of the cut locus. It is unique by \cite[Thm. 10.34 c)]{Lee2018riemannian}. In case of \ref{prop:cutlocus_2}, $\gamma_\Delta$ is still minimizing by the definition of the cut locus. For non-uniqueness, replace $\sigma_1$ by $-\frac{\pi}{2}$ (instead of $\frac{\pi}{2}$) and observe that we get a different geodesic with the same length and same endpoints. Case \ref{prop:cutlocus_3} holds by definition of the cut locus.
        \end{proof}
    \end{proposition}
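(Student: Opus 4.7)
The plan is to reduce everything to the cut time formula $t_\cut(P,\Delta)=\pi/(2\sigma_1)$ just established, together with an explicit SVD-based construction for the non-uniqueness in \ref{prop:cutlocus_2}. By the definition of cut time, the segment $\gamma_\Delta|_{[0,b]}$ is minimizing whenever $b<t_\cut(P,\Delta)$ and fails to be minimizing when $b>t_\cut(P,\Delta)$. The three cases in the statement correspond exactly to $1<t_\cut$, $1=t_\cut$, and $1>t_\cut$.

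For \ref{prop:cutlocus_1}, $\sigma_1<\pi/2$ yields $t_\cut>1$, so $\gamma_\Delta|_{[0,1]}$ is minimizing and $\Delta\in\ID_P$. Uniqueness then follows from the standard fact that $\Exp^\Gr_P$ is a diffeomorphism from $\ID_P$ onto its image, e.g., \cite[Thm 10.34(c)]{Lee2018riemannian}. For \ref{prop:cutlocus_3}, $\sigma_1>\pi/2$ yields $t_\cut<1$, so $\gamma_\Delta|_{[0,1]}$ cannot be minimizing, directly by the definition of $t_\cut$.

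The substantive case is \ref{prop:cutlocus_2}. Minimization at the boundary follows by a continuity argument: for every $b<1$ one has $\dist(P,\gamma_\Delta(b))=b\|\Delta\|$, and taking $b\uparrow 1$ gives $\dist(P,\gamma_\Delta(1))=\|\Delta\|=\mathrm{length}(\gamma_\Delta|_{[0,1]})$ by continuity of $\dist$ on the geodesically complete manifold $\Gr(n,p)$ (cf.\ Section~\ref{sec:SymSpace}). For non-uniqueness, I plan to exploit the sign indeterminacy of the SVD: writing a thin SVD $\Delta^\hor_U=\hat{Q}\Sigma V^T$ with $\Sigma=\diag(\pi/2,\sigma_2,\ldots,\sigma_r)$, define
\begin{equation*}
    \tilde{\Delta}^\hor_U := \hat{Q}\,\diag\!\left(-\tfrac{\pi}{2},\sigma_2,\ldots,\sigma_r\right)V^T = \Delta^\hor_U - \pi\,\hat{q}_1 v_1^T,
\end{equation*}
which is a horizontal lift of a tangent vector $\tilde{\Delta}\in T_P\Gr(n,p)$ distinct from $\Delta$ (since $\hat{q}_1 v_1^T\neq 0$) and of the same norm (the sum of squared singular values is unchanged). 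Applying Proposition~\ref{prop:GrassExp_Stiefel} and using $\cos(\pm\pi/2)=0$ together with $\sin(-\pi/2)=-\sin(\pi/2)$ shows that the two resulting Stiefel representatives differ only by the sign of their first column, and hence project to the same subspace under $\pi^\SG$. Therefore $\gamma_\Delta$ and $\gamma_{\tilde{\Delta}}$ are two distinct minimizing geodesics joining $P$ to the same endpoint.

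The main obstacle is the boundary-case assertion in \ref{prop:cutlocus_2} that $\gamma_\Delta|_{[0,1]}$ is still minimizing at the supremum $t_\cut=1$. The continuity/completeness argument above settles this, leveraging the Hopf--Rinow consequences collected in Section~\ref{sec:SymSpace}; beyond that, the remaining verifications are routine direct computations with the exponential formula from Proposition~\ref{prop:GrassExp_Stiefel}.
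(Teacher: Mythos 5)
Your proposal is correct and follows essentially the same route as the paper: cases a) and c) reduce to the cut-time formula $t_\cut(P,\Delta)=\pi/(2\sigma_1)$ together with \cite[Thm.~10.34]{Lee2018riemannian}, and the non-uniqueness in case b) is obtained by the same sign-flip $\sigma_1\mapsto-\pi/2$ in the SVD. You merely spell out two details the paper leaves implicit, namely the closedness-of-the-minimizing-property argument at $b=t_\cut$ and the verification that the flipped representative differs only by the sign of one column, hence projects to the same subspace.
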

    
\subsection{Riemannian Logarithm}
\label{subsec:RiemannianLogarithm}
    For any $P \in \Gr(n,p)$, the restriction of $\Exp_P^\Gr$ to the injectivity domain $\ID_P$ is a diffeomorphism onto $\Gr(n,p)\setminus \Cut_P$ by \cite[Theorem 10.34]{Lee2018riemannian}. This means that for any $F \in \Gr(n,p)\setminus \Cut_P$ there is a unique tangent vector $\Delta \in \ID_P$ such that $\Exp_P^\Gr(\Delta)=F$. The mapping that finds this $\Delta$ is conventionally called the \emph{Riemannian logarithm}. Furthermore, \cite[Thm. 10.34]{Lee2018riemannian} states that the restriction of $\Exp_P^\Gr$ to the union of the injectivity domain and the tangent cut locus $\ID_P \cup \TCL_P$ is surjective. Therefore for any $F \in \Cut_P$ we find a (non-unique) tangent vector which is mapped to $F$ via the exponential map. We propose Algorithm \ref{alg:modgrasslog}, which computes the unique $\Delta \in \ID_P \subset T_P\Gr(n,p)$ in case of $F \in \Gr(n,p)\setminus \Cut_P$ and one possible $\Delta \in \TCL_P\subset \Gr(n,p)$ for $ F\in \Cut_P$. In the latter case, all other possible $\tilde\Delta \in \TCL_P$ such that $\Exp_P^\Gr(\tilde\Delta)=F$ can explicitly be derived from that result.
    
    \begin{algorithm}[H]
    \caption{Extended Grassmann Logarithm with Stiefel representatives}
    \label{alg:modgrasslog}
    \begin{algorithmic}[1]
        \Require $U, Y \in \St(n,p)$ representing $P=UU^T,\ F=YY^T \in \Gr(n,p)$, respectively
        \State $\widetilde{Q} \widetilde{S} \widetilde{R}^T \overset{\text{\tiny SVD}}{:=} Y^T U$  \Comment{SVD}
        \State $Y_* := Y(\widetilde{Q} \widetilde{R}^T)$ \Comment{Procrustes processing}
        \State $\hat{Q} \hat{S} R^T \overset{\text{\tiny SVD}}{:=}(I_n-UU^T)Y_*$ \Comment{compact SVD}
        \State $\Sigma := \arcsin(\hat{S})$ \Comment{element-wise on the diagonal}
        \State $\Delta_U^\hor:= \hat{Q} \Sigma R^T$
        \Ensure smallest $\Delta_U^\hor \in \Hor_U\St(n,p)$ such that $\Exp_P^\Gr(\Delta)=F$
    \end{algorithmic}
    Remark: In Step 1, the expression $\overset{\text{\tiny SVD}}{:=}$ is to be understood as ``is \emph{an} SVD''. In case of $F \in \Cut_P$, i.e. singular values equal to zero, different choices of decompositions lead to different valid output vectors $\Delta^\hor_U$. The non-uniqueness of the compact SVD in Step 3 does not matter, because $\Sigma=\arcsin(\hat{S})$, and $\arcsin$ maps zero to zero and repeated singular values to repeated singular values. Therefore any non-uniqueness cancels out in the definition of $\Delta^\hor_U$.
    \end{algorithm}
    
    Before we prove the claimed properties of Algorithm \ref{alg:modgrasslog}, let us state the following: 
    An algorithm for the Grassmann logarithm with Stiefel representatives only was derived in \cite[\revcomm{Section 3.8}]{AbsilMahonySepulchre2004}. The Stiefel representatives are however not retained in this algorithm, i.e., coupling the exponential map and the logarithm recovers the input subspace
    but produces a different Stiefel representative $\tilde{Y}=\Exp^\Gr_U(\Log^\Gr_U(Y))\neq Y$ as an output. Furthermore, it requires the matrix inverse of $U^TY$, which also means that it only works for points not in the cut locus, see~\eqref{eq:Cut_P}.
    By slightly modifying this algorithm we get Algorithm~\ref{alg:modgrasslog}, which retains the Stiefel representative, does not require the calculation of the matrix inverse $(U^TY)^{-1}$ and works for all pairs of points. The computational procedure of Algorithm~\ref{alg:modgrasslog} was first published in the \revcomm{preprint of the} book chapter \cite{zimmermann2019modelreduction}.

    In the following Theorem \ref{thm:GrLog}, we show that Algorithm \ref{alg:modgrasslog} indeed produces the Grassmann logarithm for points not in the cut locus.
    \begin{theorem}
    \label{thm:GrLog}
        Let $P=UU^T \in \Gr(n,p)$ and $F=YY^T \in \Gr(n,p)\setminus\Cut_P$ be two points on the Grassmannian. Then Algorithm \ref{alg:modgrasslog} computes the horizontal lift of the Grassmann logarithm $\Log_P^\Gr(F)=\Delta \in T_P\Gr(n,p)$ to $\Hor_U\St(n,p)$. It retains the Stiefel representative $Y_*$ when coupled with the Grassmann exponential on the level of Stiefel representatives (\ref{eq:GrassmannExpStRep}), i.e.
        \begin{equation*}
            Y_*=\Exp_{U}^\Gr(\Delta^\hor_U).
        \end{equation*}
        \begin{proof}
            First, Algorithm \ref{alg:modgrasslog} aligns the given subspace representatives $U$ and $Y$ by producing a representative of the equivalence class $[Y]$ that is ``closest'' to $U$. To this end, the Procrustes method is used, cf. \cite[\revcomm{Theorem 8.6}]{Higham:2008:FM}.
            Procrustes
            gives 
            \begin{equation*}
                QR^T = \argmin_{\Phi \in O(p)} \norm{U-Y\Phi}_F,
            \end{equation*}
            by means of the SVD
            \begin{equation}
                \label{eq:SVD-UTU}
                Y^T U = Q S R^T,
            \end{equation}
            chosen here to be with singular values in ascending order from the top left to the bottom right. 
            Therefore $Y_* := YQ R^T$ represents the same subspace $[Y_*]=[Y]$, but 
            \begin{equation*}
            U^TY_*=RSR^T
            \end{equation*}
            is symmetric. Now, we can split $Y_*$ with the projector $ P = UU^T$ onto $\Span(U)$ and the projector $I_n-UU^T$ onto the orthogonal complement of $\Span(U)$ via
            \begin{equation}
                \label{eq:SplitU*}
                Y_*=UU^TY_* + (I_n-UU^T)Y_*=URSR^T+(I_n-UU^T)Y_*.
            \end{equation}
            If we denote the part of $Y_*$ that lies in $\Span(U)^\perp$ by $L:= (I_n-UU^T)Y_*$, we see that
            \begin{equation*}
                L^TL=Y_*^T(I_n-UU^T)Y_*=I_n-RS^2R^T = R(I_n-S^2)R^T.
            \end{equation*}
            That means that $\hat{S}:=\sqrt{(I_n-S^2)}$ is the diagonal matrix of singular values of $L$, with the singular values in descending order. The square root is well-defined, since $(I_n-S^2)$ is diagonal with values between $0$ and $1$.
            Note also that the column vectors of $R$ are a set of orthonormal eigenvectors of $L^TL$, i.e., a compact singular value decomposition of $L$ is of the form
            \begin{equation}
                \label{eq:SVD-L}
                L=(I_n-UU^T)Y_*=\hat{Q}\hat{S}R^T,
            \end{equation}
            where again $\hat{Q} \in \St(n,p)$.
            Define $\Sigma := \arccos (S)$, where the arcus cosine (and sine and cosine in the following) is applied entry-wise on the diagonal. Then $S=\cos(\Sigma)$ and $\hat{S}=\sin(\Sigma)$. Inserting in \eqref{eq:SplitU*} gives
            \begin{equation*}
                Y_*=UR\cos(\Sigma)R^T + \hat{Q}\sin(\Sigma)R^T.
            \end{equation*}
            This is exactly the exponential with Stiefel representatives (\ref{eq:GrassmannExpStRep}), i.e., $\Exp_{U}^\Gr(\Delta^\hor_U)=Y_*$, 
            where $\Delta^\hor_U=\hat{Q}\Sigma R^T$. We also see that the exact matrix representative $Y_*$, and not just any equivalent representative, is computed by plugging $\Delta^\hor_U$ into the exponential $\Exp^\Gr_U$.

            The singular value decomposition in (\ref{eq:SVD-UTU}) differs from the usual SVD -- with singular values in descending order -- only by a permutation of the columns of $Q$ and $R$. But if $Y^T U = Q S R^T$ is an SVD with singular values in ascending order and $Y^T U = \widetilde{Q} \widetilde{S} \widetilde{R}^T$ is an SVD with singular values in descending order, the product $QR^T=\widetilde{Q}\widetilde{R}^T$ does not change, i.e., the computation of $Y_*$ is not affected. Therefore we can compute the usual SVD for an easier implementation and keep in mind that $\widetilde{S}^2+\hat{S}^2\neq I_n$.
            
            It remains to show that $\Delta \in \ID_P$, so that it is actually the Riemannian logarithm. Since $F$ is not in the cut locus $\Cut_P$, we have $\rank(U^TY)=p$, which means that the smallest singular value of $U^TY$ is larger than zero (and smaller than or equal to one). Therefore the entries of $\Sigma=\arccos(S)$ are smaller than $\frac{\pi}{2}$, which shows the claim.
        \end{proof}
    \end{theorem}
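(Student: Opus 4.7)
My plan is to reverse-engineer the exponential formula \eqref{eq:GrassmannExpStRep}: if $\Delta^\hor_U = \hat{Q}\Sigma V^T$ is a compact SVD, then $\Exp^\Gr_U(\Delta^\hor_U) = UV\cos(\Sigma)V^T + \hat{Q}\sin(\Sigma)V^T$, so that the target representative $Y_*$ should naturally split as a $U$-component (yielding $UV\cos(\Sigma)V^T$) plus a component in $\Span(U)^\perp$ (yielding $\hat{Q}\sin(\Sigma)V^T$). The algorithm simply extracts these pieces in the right order, so the proof should proceed by constructing $Y_*$, performing this orthogonal splitting, and reading off $\hat{Q}$, $\Sigma$, and $V$ directly from it.

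First I would justify the Procrustes step. The equivalence class $[Y]=\{YR \mid R\in \O(p)\}$ contains many Stiefel representatives, and the ``correct'' one for the logarithm is distinguished by the requirement that $U^TY_*$ be symmetric (matching the form $V\cos(\Sigma)V^T$ that appears in the exponential). I would argue that with $Y^TU = \widetilde{Q}\widetilde{S}\widetilde{R}^T$, setting $Y_* := Y\widetilde{Q}\widetilde{R}^T$ indeed produces $U^TY_* = \widetilde{R}\widetilde{S}\widetilde{R}^T$, which is symmetric positive semidefinite, and that this is the minimizer of $\|U - Y\Phi\|_F$ over $\Phi \in \O(p)$ by the classical orthogonal Procrustes argument. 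Reordering the singular values so that $S$ is ascending does not affect the product $\widetilde{Q}\widetilde{R}^T$, which I would note in passing to reconcile the algorithm's SVD convention with the derivation.

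Next I would split $Y_* = UU^TY_* + (I_n - UU^T)Y_*$ and focus on the orthogonal component $L := (I_n - UU^T)Y_*$. A direct computation gives $L^TL = I_p - \widetilde{R}\widetilde{S}^2\widetilde{R}^T = \widetilde{R}(I_p - \widetilde{S}^2)\widetilde{R}^T$, so the right singular vectors of $L$ can be taken to be the columns of $\widetilde{R}$ (after reordering, which is the role of $R$ in the algorithm), and the singular values are $\hat{S} = \sqrt{I_p - S^2}$. Setting $\Sigma := \arcsin(\hat{S})$ so that $\sin(\Sigma) = \hat{S}$ and $\cos(\Sigma) = S$, I would substitute back into the decomposition of $Y_*$ to obtain
\begin{equation*}
 Y_* = UR\cos(\Sigma)R^T + \hat{Q}\sin(\Sigma)R^T,
\end{equation*}
which matches \eqref{eq:GrassmannExpStRep} exactly with $\Delta^\hor_U = \hat{Q}\Sigma R^T$. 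This simultaneously verifies both the form of the horizontal lift and the retention of the Stiefel representative $Y_*$. The main technical obstacle I anticipate is keeping the two SVD conventions (ascending vs.\ descending singular values) consistent while also ensuring that the columns of $R$ really do diagonalize $L^TL$ in the right order; this bookkeeping is what determines whether $\cos$ or $\sin$ ends up paired with which factor.

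Finally I would verify that $\Delta^\hor_U \in \ID_P$, which by the cut-locus proposition is equivalent to $\sigma_{\max}(\Delta^\hor_U) < \pi/2$. The singular values of $\Delta^\hor_U$ are the diagonal entries of $\Sigma = \arccos(S)$, and the hypothesis $F \notin \Cut_P$ translates via \eqref{eq:Cut_P} to $\rank(U^TY) = p$, i.e.\ all singular values of $S$ are strictly positive, so $\arccos$ stays strictly below $\pi/2$. This confirms that $\Delta$ is not merely a tangent vector mapping to $F$ but is the unique shortest such vector, completing the identification with the Riemannian logarithm.
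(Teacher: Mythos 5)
Your proposal is correct and follows essentially the same route as the paper's proof: Procrustes alignment via the SVD of $Y^TU$ to make $U^TY_*$ symmetric, orthogonal splitting of $Y_*$ along $\Span(U)$ and its complement, identification of the compact SVD of $L=(I_n-UU^T)Y_*$ with right singular vectors given by $R$, substitution into the exponential formula \eqref{eq:GrassmannExpStRep}, and the cut-locus argument that $\rank(U^TY)=p$ forces all entries of $\Sigma$ below $\frac{\pi}{2}$. The only cosmetic difference is defining $\Sigma$ via $\arcsin(\hat{S})$ (as the algorithm does) rather than $\arccos(S)$ (as the paper's proof does), which are identical since $S^2+\hat{S}^2=I_p$.
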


    \revcomm{
    \begin{remark}
        It should be noted that the compact SVD of the $n \times p$ matrix in Step 3 of Algorithm~\ref{alg:modgrasslog} does not need to be computed explicitly. As can be seen from the proof of Theorem~\ref{thm:GrLog}, the factors $\hat{S}$ and $R$ can be obtained from the SVD of $Y^TU \in \R^{p \times p}$ in Step 1 of Algorithm~\ref{alg:modgrasslog}, by flipping the order of columns of $\tilde{R}$ to obtain $R$, and by flipping the order of the diagonal $\tilde{S}$ to obtain $S$ and calculate $\hat{S}=\sqrt{(I_n-S^2)}$. In the end, $\hat{Q}$ is obtained by $\hat{Q}=(I_n-UU^T)YQ\hat{S}^{-1}$, where $Q$ is $\tilde{Q}$ with flipped columns. (When $\hat{S}$ has zeros on the diagonal, the resulting $0/0$ ambiguity can be resolved in any way that preserves the orthogonality of $\hat{Q}$; this has no impact on the output of Algorithm~\ref{alg:modgrasslog} in view of the remark therein.) This course of action with just one SVD can be compared to approaches in \cite[Task 2]{Gallivan_etal2003}, using a thin CS-decomposition of a larger matrix, and \cite[Equation~(16)]{AlimisisVandereycken2023}.
    \end{remark}
    }
    
    The next theorem gives an explicit description of the shortest geodesics between a point and another point in its cut locus.
    \begin{theorem}
    \label{thm:multiple_shortest_geodesics}
        For $P=UU^T \in \Gr(n,p)$ and some $F=YY^T \in \Cut_P$, let $r$ denote the number of principal angles between $P$ and $F$ equal to $\frac{\pi}{2}$. Then $\Delta \in \TCL_P \subset T_P\Gr(n,p)$ is a minimizing solution of 
        \begin{equation}
        \label{eq:multiple_shortest_geodesics}
            \Exp_P^\Gr(\Delta)=F
        \end{equation} 
        if and only if the horizontal lift $\Delta^\hor_U$ is an output of Algorithm \ref{alg:modgrasslog}.
                
        Consider the compact SVD $\Delta^\hor_U = \hat{Q} \Sigma R^T$. Then the horizontal lifts of all other minimizing solutions of \eqref{eq:multiple_shortest_geodesics} are given by
        \begin{equation*}
                (\Delta_W)^\hor_U:= \hat{Q}\Sigma\diag(W,I_{p-r})R^T,
        \end{equation*}
        where $W \in \O(r)$ and $\diag(W,I_{p-r})=\begin{psmallmatrix}\smash{W} & 0 \\ 0 & I_{p-r} \end{psmallmatrix}$ denotes a block diagonal matrix. The shortest geodesics between $P$ and $F$ are given by
        \begin{equation*}
            \gamma_W(t):= \Exp^\Gr_P(t\Delta_W)=[UR\diag(W^T,I_{p-r}) \cos(t\Sigma) + \hat{Q}\sin(t\Sigma)].
        \end{equation*}
        \begin{proof}
            Algorithm \ref{alg:modgrasslog} continues to work for points in the cut locus, but the result is not unique. With an SVD of $Y^T U$ with singular values in ascending order, the first $r$ singular values are zero. By Proposition~\ref{prop:ambiguitySVD},
            \begin{equation*}
                Y^TU=QSR^T=Q\diag(W_1,D)S\diag(W_2,D^T)R^T,
            \end{equation*}
            where $D \in \O(p-r)$ with $(D)_{ij}=0 $ for $ s_i \neq s_j$ and $W_1,W_2 \in \O(r)$ arbitrary. Then $Y_*$ is not unique anymore, but is given as the set of matrices
            \begin{equation*}
                \left\{Y_{*,W_1,W_2}:=YQ\diag(W_1W_2,I_{p-r})R^T|\hspace{0.1cm} W_1,W_2 \in O(r)\right\}.
            \end{equation*}
            Define $W:=W_1W_2$ and $\hat{W}:= \diag(W,I_{p-r})$. Then
            \begin{equation*}
                \begin{split} 
                    (I_n-UU^T)Y_{*,W}&=(I_n-UU^T)YQ\hat{W}R^T\\
                    &=\underbrace{(I_n-UU^T)YQR^T}_{\hat{Q}\hat{S}R^T}R\hat{W}R^T=\hat{Q}\hat{S}\hat{W}R^T.
                \end{split}
            \end{equation*}
            With $\Sigma=\arcsin(\hat{S})=\arccos(S)$, every matrix
            \begin{equation*}
                (\Delta_W)^\hor_U:=\hat{Q}\Sigma\hat{W}R^T
            \end{equation*}
            is the horizontal lift of a tangent vector at $P$ of a geodesic towards $F$: For the exponential, it holds that
            \begin{equation*}
                \begin{split}
                    \Exp_{P}^\Gr(\Delta_W)&=[UR\hat{W}^T\cos(\Sigma)\hat{W}R^T+\hat{Q}\sin(\Sigma)\hat{W}R^T]\\
                    &=[UR\hat{W}^T\cos(\Sigma)+\hat{Q}\sin(\Sigma)]
                    =[UR\cos(\Sigma)+\hat{Q}\sin(\Sigma)]
                    =[Y],
                \end{split}
            \end{equation*} 
            where the third equality holds, since $\Sigma=\diag(\frac{\pi}{2},\dots, \frac{\pi}{2}, \sigma_{r+1}, \dots, \sigma_{p})$. But the geodesics $\gamma_W$ starting at $[U]$ in the directions $\Delta_W$ differ, i.e.
            \begin{equation*}
                \begin{split}
                    \gamma_W(t)&=[UR\hat{W}^T\cos(t\Sigma)\hat{W}R^T+\hat{Q}\sin(t\Sigma)\hat{W}R^T]\\
                    &=[UR\hat{W}^T\cos(t\Sigma)+\hat{Q}\sin(t\Sigma)].
                \end{split}
            \end{equation*}
            Hence, the ambiguity factor $\hat{W}^T=\diag(W^T,I_{p-r})$ does not vanish for $0<t<1$. The geodesics are all of the same (minimal) length, since the singular values do not change and $\gamma_W(1)=\Exp^\Gr_{P}(\Delta_W)$.
            
            To show that there are no other solutions, let $\bar\Delta \in \TCL_P$ fulfill $\Exp^\Gr_P(\bar\Delta)=F$ and $\bar\Delta^\hor_U=\bar Q \bar \Sigma \bar R^T$. Then by \eqref{eq:GrassmannExpStRep} there is some $M \in \O(p)$ such that
            \begin{equation*}
                YM = U\bar R\cos(\bar\Sigma)\bar R^T + \bar Q\sin(\bar\Sigma)\bar R^T,
            \end{equation*}
            which implies $U^TYM = \bar R \cos(\bar\Sigma)\bar R^T$, which is an SVD of $U^TYM$. Therefore $\bar Y_*:=YM\bar R \bar R^T=YM$ fulfills the properties of $Y_*$ of Algorithm~\ref{alg:modgrasslog}. Now
            \begin{equation*}
                (I_n-UU^T)\bar Y_* = (I_n-UU^T)(U\bar R\cos(\bar\Sigma)\bar R^T + \bar Q\sin(\bar\Sigma)\bar R^T)= \bar Q\sin(\bar\Sigma)\bar R^T,
            \end{equation*}
            which means that $\bar Q\sin(\bar\Sigma)\bar R^T$ is a compact SVD of $(I_n-UU^T)\bar Y_*$. Therefore $\bar \Delta^\hor_U$ is an output of Algorithm~\ref{alg:modgrasslog} and the claim is shown.
        \end{proof}
    \end{theorem}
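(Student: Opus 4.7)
The plan is to analyze where non-uniqueness enters Algorithm~\ref{alg:modgrasslog} when $F\in\Cut_P$, exhibit the full family of admissible outputs, verify that each produces a minimizing geodesic to $F$, and then show conversely that every minimizing geodesic arises this way. A useful preliminary observation is that, by \eqref{eq:Cut_P} and the principal-angle characterization, $F\in\Cut_P$ with exactly $r$ principal angles equal to $\pi/2$ is equivalent to $U^TY$ having exactly $r$ zero singular values.

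First I would localize the ambiguity. Step~1 of the algorithm computes an SVD of $Y^TU$; when this matrix has $r$ zero singular values, the left and right singular vectors associated with the zero block are unique only up to simultaneous actions of $\O(r)$, while coincident positive singular values cancel in the product $\widetilde Q\widetilde R^T$ as already noted in the algorithm remark. Invoking the SVD-ambiguity statement (Proposition~\ref{prop:ambiguitySVD}), every valid SVD can be written as $Y^TU=Q\,\diag(W_1,D)\,S\,\diag(W_2,D^T)\,R^T$, so that $Y_*$ becomes $Y_{*,W}:=YQ\,\diag(W,I_{p-r})R^T$ with $W:=W_1W_2\in\O(r)$. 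Setting $\hat W:=\diag(W,I_{p-r})$ and propagating through Steps~3--5 gives $(I_n-UU^T)Y_{*,W}=\hat Q\hat S\hat W R^T$; since $\arcsin$ acts entry-wise and leaves the zero/nonzero structure of $\hat S$ intact, $\Sigma$ is unchanged and the output is
\begin{equation*}
(\Delta_W)^\hor_U=\hat Q\,\Sigma\,\hat W\,R^T.
\end{equation*}

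Second, I would verify that each $\Delta_W$ is a minimizing solution of $\Exp^\Gr_P(\Delta)=F$. The key identity is that $\cos(\Sigma)$ has its first $r$ diagonal entries equal to zero, so $\hat W^T\cos(\Sigma)\hat W=\cos(\Sigma)$; substituting into \eqref{eq:GrassmannExpStRep} at $t=1$ yields $\Exp^\Gr_P(\Delta_W)=[UR\cos(\Sigma)R^T+\hat Q\sin(\Sigma)R^T]=[Y_{*}]=F$. The Riemannian norm $\|\Delta_W\|^2=\tfrac12\tr(\Sigma\hat W\hat W^T\Sigma)=\tfrac12\tr(\Sigma^2)$ is independent of $W$, so all curves $\gamma_W$ have the same length; since one of them (the ``vanilla'' output) is already minimizing by the cut-time characterization $t_\cut(P,\Delta)=\pi/(2\sigma_1)$, all of them are. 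For $0<t<1$, $\sin(t\pi/2)\ne 0$ shows the $\hat W$-dependence genuinely persists, so the $\gamma_W$ are truly distinct curves, and a postmultiplication by $\hat W R^T\in\O(p)$ reduces the representative to the form given in the statement.

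Third, for the converse inclusion, I would take an arbitrary minimizing $\bar\Delta\in\TCL_P$ with compact SVD $\bar\Delta^\hor_U=\bar Q\bar\Sigma\bar R^T$ and $\Exp^\Gr_P(\bar\Delta)=F$. By \eqref{eq:GrassmannExpStRep} there exists $M\in\O(p)$ with
\begin{equation*}
YM=U\bar R\cos(\bar\Sigma)\bar R^T+\bar Q\sin(\bar\Sigma)\bar R^T.
\end{equation*}
Left-multiplying by $U^T$ gives $U^TYM=\bar R\cos(\bar\Sigma)\bar R^T$, which furnishes a valid SVD of $(YM)^TU$. Treating $\bar Y_*:=YM$ as the Procrustes output of Algorithm~\ref{alg:modgrasslog} along this SVD branch, we obtain $(I_n-UU^T)\bar Y_*=\bar Q\sin(\bar\Sigma)\bar R^T$ as a compact SVD, and because $\bar\Delta\in\TCL_P$ forces the diagonal of $\bar\Sigma$ into $[0,\pi/2]$, the entry-wise $\arcsin$ recovers $\bar\Sigma$ exactly. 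Hence $\bar\Delta^\hor_U$ is indeed an output of the algorithm.

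The main obstacle I anticipate is the bookkeeping around the SVD ordering conventions: the algorithm uses ascending order (placing the zero block in the top-left) while the standard SVD uses descending order, and one has to absorb the arbitrary $M\in\O(p)$ coming from the Grassmann equivalence class into the Procrustes alignment to avoid counting spurious extra solutions. Once that is handled, the argument reduces to tracking the block structure of $\hat W$ through the cosine/sine factors, where the fact that $\cos(\pi/2)=0$ does all the heavy lifting.
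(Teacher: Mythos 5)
Your proposal is correct and follows essentially the same route as the paper's own proof: exploiting the SVD ambiguity of $Y^TU$ via Proposition~\ref{prop:ambiguitySVD} to parameterize the outputs by $W\in\O(r)$, using $\cos(\pi/2)=0$ to show each $\Delta_W$ still maps to $F$ while the geodesics differ for $0<t<1$, and running the converse by extracting $M\in\O(p)$ from \eqref{eq:GrassmannExpStRep} so that $YM$ qualifies as a valid Procrustes output. The only additions beyond the paper's argument are small explicit checks (the $W$-independence of $\|\Delta_W\|$ and the $[0,\pi/2]$ range of $\bar\Sigma$) that the paper leaves implicit.
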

    \begin{figure}[ht]
        \centering
        \includegraphics{./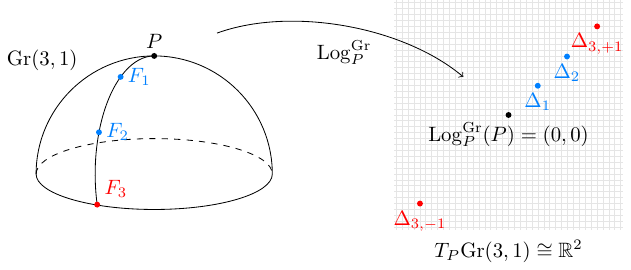}
        \caption{The manifold of one-dimensional subspaces of $\R^3$, i.e., $\Gr(3,1)$, can be seen as the upper half sphere with half of the equator removed. For points in the cut locus of a point $P \in \Gr(3,1)$ (like $F_3$ in the figure), there is no unique velocity vector in $T_P\Gr(3,1)$
        that sends a geodesic from $P$ to the point in question, but instead a set of two starting velocities ($\Delta_{3,+1}$ and $\Delta_{3,-1}$) that can be calculated according to Theorem\nobreakspace\ref{thm:multiple_shortest_geodesics}. Since the points actually mark one dimensional subspaces through the origin, $F_3$ is identical to its antipode on the equator.}
        \label{fig:cutlocus}
    \end{figure}
    Together, Theorem~\ref{thm:GrLog} and Theorem~\ref{thm:multiple_shortest_geodesics} allow to map any set of points on $\Gr(n,p)$ to a single tangent space. The situation of multiple tangent vectors that correspond to one and the same point in the cut locus is visualized in \autoref{fig:cutlocus}. Notice that if $r=1$ in Theorem~\ref{thm:multiple_shortest_geodesics}, there are only two possible geodesics $\gamma_{\pm 1}(t)$. For $r>1$ there is a smooth variation of geodesics.
    
    In \cite[Theorem 3.3]{BatziesHueperMachadoLeite2015} a closed formula for the logarithm for Grassmann locations represented by orthogonal projectors was derived.
    We recast this result in form of the following proposition. 
    \begin{proposition}[Grassmann Logarithm: Projector perspective]
        Let a point $P \in \Gr(n,p)$ and $F \in \Gr(n,p) \setminus\Cut_P$. Then $\Delta=[\Omega,P] \in \ID_P\subset T_P\Gr(n,p)$ such that $\Exp^\Gr_P([\Omega,P])=F$ is determined by
        \begin{equation*}
            \Omega= \frac{1}{2}\log_m\left((I_n-2F)(I_n-2P)\right) \in \so_P(n).
        \end{equation*}
        Consequently $\Log^\Gr_P(F)=[\Omega,P]$.
    \end{proposition}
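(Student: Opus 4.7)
My plan is to invert the closed-form projector-exponential directly, using the fact that the ``reflection'' $I_n-2P$ is an involution that anti-commutes with every element of $\so_P(n)$.

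Starting point: for $\Omega\in\so_P(n)$ and $\Delta=[\Omega,P]$, the earlier formula gives
\begin{equation*}
\Exp_P^\Gr(\Delta)=\expm(\Omega)\,P\,\expm(-\Omega)=F.
\end{equation*}
So one has to solve for $\Omega$ from this matrix equation. The first step is to replace $P$ by $I_n-2P$, which is easier to handle because it is an involution: $(I_n-2P)^2=I_n-4P+4P^2=I_n$, and similarly $(I_n-2F)^2=I_n$. Multiplying the exponential identity by $-2$ and adding $I_n$ yields
\begin{equation*}
\expm(\Omega)\,(I_n-2P)\,\expm(-\Omega)=I_n-2F,
\end{equation*}
equivalently $\expm(\Omega)(I_n-2P)=(I_n-2F)\expm(\Omega)$.

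The key algebraic observation is then the anti-commutation relation $\Omega(I_n-2P)=-(I_n-2P)\Omega$, which I would verify from the membership $\Omega\in\so_P(n)$ (i.e.\ $\Omega=\Omega P+P\Omega$): expanding $\Omega(I_n-2P)=\Omega-2\Omega P$ and $(I_n-2P)\Omega=\Omega-2P\Omega$ and using $P\Omega+\Omega P=\Omega$ gives both expressions equal to $-\Omega+2\Omega P=-(I_n-2P)\Omega$. By induction on powers, $\Omega^k(I_n-2P)=(-1)^k(I_n-2P)\Omega^k$, so the power series expansion yields $\expm(\Omega)(I_n-2P)=(I_n-2P)\expm(-\Omega)$.

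Combining these two expressions for $\expm(\Omega)(I_n-2P)$ gives
\begin{equation*}
(I_n-2F)\expm(\Omega)=(I_n-2P)\expm(-\Omega).
\end{equation*}
Right-multiplying by $\expm(\Omega)$ and using $(I_n-2F)^{-1}=I_n-2F$ (involution) produces the clean equation
\begin{equation*}
\expm(2\Omega)=(I_n-2F)(I_n-2P),
\end{equation*}
from which the claimed formula $\Omega=\tfrac12\logm\bigl((I_n-2F)(I_n-2P)\bigr)$ follows by taking the principal matrix logarithm.

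The only genuinely delicate point is justifying that $\logm$ is well-defined, single-valued on the correct branch, and returns the desired element of $\so_P(n)$. The hypothesis $F\notin\Cut_P$ is used here: by the cut locus description in \eqref{eq:Cut_P} together with the principal-angle viewpoint, every principal angle $\theta_k$ between $P$ and $F$ is strictly less than $\pi/2$. The eigenvalues of $(I_n-2F)(I_n-2P)$ are $e^{\pm 2i\theta_k}$ (plus eigenvalue $1$ from the shared directions), so they avoid the negative real axis and the principal logarithm is unambiguous. The resulting $\Omega$ lies in $\so_P(n)$ because $\tfrac12\logm\bigl((I_n-2F)(I_n-2P)\bigr)$ inherits skew-symmetry from the fact that $(I_n-2F)(I_n-2P)$ is orthogonal, and it satisfies the relation $\Omega P+P\Omega=\Omega$ because the exponent reconstructed from $\Exp_P^\Gr$ already lies there; alternatively, this follows from the anti-commutation relation running in reverse (any $\Omega$ anti-commuting with $I_n-2P$ lies in $\so_P(n)$). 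Finally, since $F\notin\Cut_P$, the tangent vector $\Delta=[\Omega,P]$ lies in $\ID_P$, making it the Riemannian logarithm $\Log_P^\Gr(F)$.
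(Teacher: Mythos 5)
Your proof is correct, and it is worth noting that the paper itself offers no proof of this proposition: it merely recasts \cite[Theorem 3.3]{BatziesHueperMachadoLeite2015}, so your argument supplies a self-contained derivation that the text omits. The two pillars of your argument both check out. First, the algebra: conjugating the involution $I_n-2P$ by $\expm(\Omega)$ turns the geodesic equation into $\expm(\Omega)(I_n-2P)\expm(-\Omega)=I_n-2F$, and the anti-commutation $\Omega(I_n-2P)=-(I_n-2P)\Omega$ is exactly equivalent to the defining relation $\Omega=\Omega P+P\Omega$ of $\so_P(n)$, which propagates through the power series to give $\expm(\Omega)(I_n-2P)=(I_n-2P)\expm(-\Omega)$ and hence $\expm(2\Omega)=(I_n-2F)(I_n-2P)$. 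Second, the branch issue: since $\Omega$ is real skew-symmetric with spectrum $\{\pm i\theta_k\}\cup\{0\}$, where the $\theta_k$ are the principal angles, the product $(I_n-2F)(I_n-2P)$ is orthogonal with eigenvalues $e^{\pm 2i\theta_k}$ and $1$; the hypothesis $F\notin\Cut_P$ gives $\theta_k<\pi/2$, so $-1$ is excluded from the spectrum, $\log_m$ is defined, and because the eigenvalues $\pm 2i\theta_k$ of $2\Omega$ lie in the strip $|\mathrm{Im}\,z|<\pi$, the principal logarithm returns precisely $2\Omega$ rather than some other logarithm. One small remark on presentation: your closing sentence about why the output lies in $\so_P(n)$ is logically redundant in the direction you argue (you already start from an $\Omega\in\so_P(n)$ with $[\Omega,P]\in\ID_P$, whose existence follows from the surjectivity of $\Exp_P^\Gr$ restricted to $\ID_P$, and show the formula recovers that same $\Omega$); the ``reverse anti-commutation'' alternative you sketch is valid but unnecessary. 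If you wanted the statement in its strongest form, you could add the observation that uniqueness of $\Delta\in\ID_P$ with $\Exp^\Gr_P(\Delta)=F$ is already guaranteed by \cite[Theorem 10.34]{Lee2018riemannian}, so determining one such $\Omega$ determines the logarithm.
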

    This proposition gives the logarithm explicitly, but it relies on 
    $n \times n$ matrices.
    Lifting the problem to the Stiefel manifold reduces the computational complexity.
    A method to compute the logarithm that uses an orthogonal completion of the Stiefel representative $U$ and the CS decomposition was proposed in \cite{Gallivan_etal2003}. 

\subsection{Numerical Performance of the Logarithm}
    In this section, we assess the numerical accuracy of Algorithm~\ref{alg:modgrasslog} as opposed to the algorithm introduced in \cite[Section 3.8]{AbsilMahonySepulchre2004}, for brevity hereafter referred to as the \emph{new log algorithm} and the \emph{standard log algorithm}, respectively.
    \begin{figure}
        \centering
         \includegraphics[width=.6\textwidth]{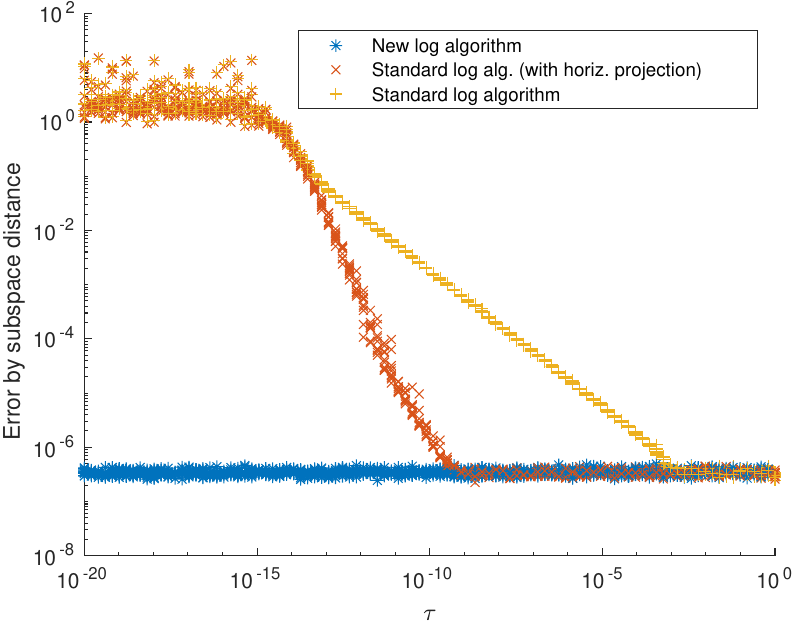}
        \caption{The error of the new log algorithm (blue stars) versus standard log algorithm with horizontal projection (red crosses) by subspace distance over $\tau$. For comparison, the error of the standard log algorithm without projection onto the horizontal space is also displayed (yellow plus). The cut locus is approached \revcomm{as $\tau$ goes to zero}.  It can be observed that the new log algorithm still produces reliable results close to the cut locus.}
        \label{fig:numericalLog_error}
    \end{figure}
    
    For a random subspace representative $U \in \St(1000,200)$ and a random horizontal tangent vector $\Delta^\hor_U \in \Hor_U\St(1000,200)$ with largest singular value set to $\frac\pi2$, the subspace representative
    \begin{equation*}
        U_1(\tau) = \Exp^\Gr_U\left((1-\tau)\Delta^\hor_U\right), \quad \tau \in [10^{-20},10^0],
    \end{equation*}
    is calculated. Observe that $U_1(0)U_1(0)^T$ is in the cut locus of $UU^T$. Then the logarithm $(\tilde{\Delta}(\tau))^\hor_U = \Log^\hor_U(U,U_1(\tau))$ is calculated according to the new log algorithm and the standard log algorithm, respectively. In the latter case, $(\tilde{\Delta}(\tau))^\hor_U$ is projected to the horizontal space $\Hor_U\St(1000,200)$ by \eqref{eq:projRHorUSt} to ensure $U^T(\tilde{\Delta}(\tau))^\hor_U = 0$. For $\tilde{U}_1(\tau)=\Exp^\Gr_U((\tilde{\Delta}(\tau))^\hor_U)$, the error is then calculated according to \eqref{eq:distGrass} as 
    \begin{equation*}
        \dist\left(U_1(\tau),\tilde{U}_1(\tau)\right) = \norm{\arccos(S)}_F,
    \end{equation*}
    where $QSR^T = U_1(\tau)^T\tilde{U}_1(\tau)$ is an SVD. Even though theoretically impossible, entries of values larger than one may arise in $S$ due to the finite machine precision. In order to catch such numerical errors, the real part $\Re(\arccos(S))$ is used in the actual calculations of the subspace distance.
    
    In Figure~\ref{fig:numericalLog_error}, the subspace distance between $U_1(\tau)$ and $\tilde{U}_1(\tau)$ is displayed for $100$ logarithmically spaced values $\tau$ between $10^{-20}$ and $10^0$. The Stiefel representative $\tilde{U}_1(\tau)$ is here calculated with the new log algorithm, with the standard log algorithm, and with the standard log algorithm with projection onto the horizontal space. This is repeated for $10$ random subspace representatives $U$ with random horizontal tangent vectors $\Delta^\hor_U$, and the results are plotted individually.
    As expected, Algorithm~\ref{alg:modgrasslog} shows favorable behaviour when approaching the cut locus. When the result of the standard log algorithm is not projected onto the horizontal space, it can be seen that its subspace error starts to increase already at $\tau \approx 10^{-3}$. The baseline error (in Figure~\ref{fig:numericalLog_error}) is due to the numerical accuracy of the subspace distance calculation procedure. \revcomm{The code to reproduce Figure~\ref{fig:numericalLog_error} can be found at \href{https://github.com/RalfZimmermannSDU/RiemannGrassmannLog}{github.com/RalfZimmermannSDU/RiemannGrassmannLog}.}
    
    Even though this experiment addresses the extreme-case behavior, it is of practical importance. In fact, the results of \cite{AbsilEdelmanKoev2006} show that for large-scale $n$ and two subspaces drawn from the uniform distribution on $\Gr(n,p)$, the largest principal angle between the subspaces is with high probability close to $\frac{\pi}{2}$.

 \section{Local Parameterizations of the Grassmann Manifold}
\label{sec:Parameterizations}
    In this section, we construct local parameterizations and coordinate charts of the Grassmannian.
   To this end, we work with the Grassmann representation as orthogonal projector $P = UU^T$.
    The dimension of $\Gr(n,p)$ is $(n-p)p$.
    Here, we recap how explicit local parameterizations from open subsets of $\R^{(n-p)\times p}$ onto open subsets of $\Gr(n,p)$ (and the corresponding coordinate charts) can be constructed.
    
    The Grassmannian $\Gr(n,p)$ can be parameterized by the so called \emph{normal coordinates} via the exponential map, which was also done in \cite{HelmkeHueperTrumpf2007}. Let $P=UU^T\in \Gr(n,p)$ and $U_\perp$ some orthogonal completion of $U \in \St(n,p)$. By making use of \eqref{eq:GrassmannTangentvectorUBUT}, a parameterization of $\Gr(n,p)$ around $P$ is given via
    \begin{equation*}
    \begin{split}
        \rho &\colon \R^{(n-p) \times p} \to \Gr(n,p),\\
        \rho(B)&:= \Exp^\Gr_P(U_\perp B U^T + U B^T U_\perp^T)\\
        &= \begin{pmatrix} U & U_\perp \end{pmatrix}\expm\left(\begin{pmatrix} 0 & -B^T\\ B & 0 \end{pmatrix}\right)P_0\expm\left(\begin{pmatrix} 0 & B^T\\ -B & 0 \end{pmatrix}\right)\begin{pmatrix} U & U_\perp \end{pmatrix}^T.
    \end{split}
    \end{equation*} 
    
    A different approach that avoids matrix exponentials, and which is also briefly introduced in \cite[Appendix C.4]{HelmkeMoore1994}, works as follows: Let $\mathcal{B} \subset \R^{(n-p)\times p}$ be an open ball around the zero-matrix $0\in \R^{(n-p)\times p}$ for some induced matrix norm $\|\cdot\|$. Consider
    \[
    \varphi\colon \mathcal{B} \rightarrow \R^{n\times n},\ B\mapsto 
    \begin{pmatrix}
    I_p\\
    B
    \end{pmatrix}
    (I_p + B^TB)^{-1} 
    \begin{pmatrix}
    I_p & B^T
    \end{pmatrix}.
    \]
    Note that $B$ is mapped to the orthogonal projector onto $\colspan(\begin{psmallmatrix}
    I_p\\ B
    \end{psmallmatrix})$, so that actually $\varphi(\mathcal{B})\subset \Gr(n,p)$.
    In particular, $\varphi(0) = P_0$.
    Let $P\in \Gr(n,p)$ be written block-wise as 
    $P = \begin{psmallmatrix}
        A & B^T\\
        B & C
        \end{psmallmatrix}
    $. Next, we show that the image of $\varphi$ is the set of such projectors $P$ with an invertible $p \times p$-block $A$ and that $\varphi$ is a bijection onto its image. To this end, assume that $A\in\R^{p\times p}$ has full rank $p$.
    Because $P$ is idempotent, it holds
    \[
    P = 
    \begin{pmatrix}
        A & B^T\\
        B & C
        \end{pmatrix} 
        =
        \begin{pmatrix}
        A^2 + B^TB & AB^T + B^TC\\
        BA + CB & BB^T + C^2
        \end{pmatrix}
    = P^2.
    \]
    As a consequence, $(I_p + A^{-1}B^TBA^{-1})^{-1} = A(\overbrace{A^2 + B^TB}^{=A})^{-1}A = A$.
    Moreover, since $p = \rank P = \rank A = \rank \begin{psmallmatrix}
        A \\
        B
        \end{psmallmatrix}$, the blocks 
    $\begin{psmallmatrix}
        B^T\\
        C
    \end{psmallmatrix}
    $
    can be expressed as a linear combination 
    $\begin{psmallmatrix}
        A\\
        B
        \end{psmallmatrix}X =
    \begin{psmallmatrix}
        B^T\\
        C
    \end{psmallmatrix}    
    $
    with $X\in\R^{p\times (n-p)}$.
    This shows that $X = A^{-1}B^T$ and $C = BA^{-1}B^T$.
    In summary,
    \begin{eqnarray*}
        P &=& 
        \begin{pmatrix}
            A & B^T\\
            B & BA^{-1}B^T
        \end{pmatrix}
        =
        \begin{pmatrix}
            I_p       \\
            BA^{-1} \\ 
        \end{pmatrix}
        A
        \begin{pmatrix}
            I_p & A^{-1}B^T
        \end{pmatrix}\\
        &=&
        \begin{pmatrix}
            I_p       \\
            BA^{-1} \\ 
        \end{pmatrix}
        (I_p + A^{-1}B^TBA^{-1})^{-1}
        \begin{pmatrix}
            I_p & A^{-1}B^T
        \end{pmatrix}
        = \varphi(BA^{-1}).
    \end{eqnarray*}
    Let 
    $
    \psi:  
    \begin{psmallmatrix}
        A & B^T\\
        B & C
    \end{psmallmatrix}
    \mapsto BA^{-1}$.
    Then, for any $B\in \mathcal{B}$, $(\psi \circ \varphi)(B) = B$ so that $\psi\circ \varphi = \id|_{\mathcal{B}}$.
    Conversely, for any $P\in \Gr(n,p)$ with full rank upper $(p\times p)$-diagonal block $A$,
    $(\varphi \circ \psi)(P) = P$.
    Therefore,
    $\varphi:\mathcal{B} \rightarrow \varphi(\mathcal{B})$ is a local parameterization
    around $0\in \R^{(n-p)\times p}$ and
    $x:= \psi|_{\varphi(\mathcal{B})}: \varphi(\mathcal{B}) \to \mathcal{B}$
    is the associated coordinate chart $x = \varphi^{-1}$.
    With the group action $\Phi$, we can move this local parameterization to obtain local parameterizations
    around any other point of $\Gr(n,p)$ via $\varphi_Q(B) := Q\varphi(B)Q^T$, which (re)establishes the fact that 
    $\Gr(n,p)$ is an embedded $(n-p)p$-dimensional submanifold of $\R^{n\times n}$.  

    The tangent space at $P$ is the image $\colspan(\D\varphi_P)$ for a suitable parameterization $\varphi$ around $P$.
    At $P_0$, we obtain
    \[
    T_{P_0}\Gr(n,p) =\{\D\varphi_{P_0}(B)\mid B\in\R^{(n-p)\times p} \}
    =\{ \begin{pmatrix}
        0 & B^T\\
        B &0
    \end{pmatrix}\mid B\in\R^{(n-p)\times p} \},
    \]
    in consistency with \eqref{eq:GrassmannTangentvectorB}.
    
    In principle, $\varphi$ and $\psi$ can be used as a replacement for the 
    Riemannian exp- and log-mappings in data processing procedures. For example, for a set of data points contained in 
    $\varphi(\mathcal{B})\subset \Gr(n,p)$, Euclidean interpolation can be performed on the coordinate images in $\mathcal{B}\subset \R^{(n-p)\times p}$.
    Likewise, for an objective function $f:\Gr(n,p)\supset \mathcal{D}\to \R$ with domain $\mathcal{D}\subset \varphi(\mathcal{B})$, the
    associated function $f\circ \varphi: \R^{(n-p)\times p}\supset \varphi^{-1}(\mathcal{D})\to \R$ can be optimized relying entirely on
    standard Euclidean tools; no evaluation of neither matrix exponentials nor matrix logarithms is required.
    Yet, these parameterizations do not enjoy the metric special properties of the Riemannian normal coordinates.
    Another reason to be wary of interpolation in coordinates is that the values on the Grassmannian will never leave $\varphi(\mathcal{B})$, and this can be very unnatural for some data sets. Furthermore, the presence of a domain $\mathcal{D}$ can be unnatural, as $\varphi(\mathcal{B})$ is an open subset of $\Gr(n,p)$, whereas the whole Grassmannian is compact, a desirable property for optimization. If charts are a switched, then information gathered by the solver may lose interest. Nevertheless, working in charts can be a successful approach~\cite{Usevich2014}.
    
\section{Jacobi Fields and Conjugate Points}
\label{sec:conjugatepoints}
    In this section, we describe Jacobi fields vanishing at one point and the conjugate locus of the Grassmannian. \emph{Jacobi fields} are vector fields along a geodesic fulfilling the Jacobi equation \eqref{eq:JacobiEquation}. They can be viewed as vector fields pointing towards another ``close-by'' geodesic, see for example \cite[\revcomm{Chapter 10}]{Lee2018riemannian}. The \emph{conjugate points} of $P$ are all those $F \in \Gr(n,p)$ such that there is a non-zero Jacobi field along a (not necessarily minimizing) geodesic from $P$ to $F$, which vanishes at $P$ and $F$. The set of all conjugate points of $P$ is the \emph{conjugate locus} of $P$. In general, there are not always multiple distinct (possibly non-minimizing) geodesics between two conjugate points, but on the Grassmannian there are. The conjugate locus on the Grassmannian was first treated in \cite{Wong1968}, but the description there is not complete. This is for example pointed out in \cite{Sakai1977} and \cite{Berceanu1997}. The latter gives a description of the conjugate locus in the complex case, which we show can be transferred to the real case.
    
    Jacobi fields and conjugate points are of interest when variations of geodesics are considered. They arise for example in geodesic regression \cite{fletcher2013geodesic} and curve fitting problems on manifolds \cite{BergmannGousenbourger2018}.

\subsection{Jacobi Fields}
\label{subsec:jacobifields}
    A \emph{Jacobi field} is smooth vector field $J$ along a geodesic $\gamma$ satisfying the ordinary differential equation 
    \begin{equation}
    \label{eq:JacobiEquation}
        D_t^2J+R(J,\dot\gamma)\dot\gamma=0,
    \end{equation}
    called \emph{Jacobi equation}. Here $R(\cdot,\cdot)$ is the curvature tensor and $D_t$ denotes the covariant derivative along the curve $\gamma$. This means that for every extension $\hat{J}$ of $J$, which is to be understood as a smooth vector field on a neighborhood of the image of $\gamma$ that coincides with $J$ on $\gamma(t)$ for every $t$, it holds that $(D_t J)(t) = \nabla_{\dot\gamma(t)}\hat{J}$.  For a detailed introduction see for example \cite[Chapter 10]{Lee2018riemannian}. A Jacobi field is the variation field of a variation through geodesics. That means intuitively that $J$ points from the geodesic $\gamma$ to a ``close-by'' geodesic, and, by linearity and scaling, to a whole family of such close-by geodesics. Jacobi fields that vanish at a point can be explicitly described via \cite[Proposition 10.10]{Lee2018riemannian}, which states that the Jacobi field $J$ along the geodesic $\gamma$, with $\gamma(0)=p$ and $\dot\gamma(0)=v$, and initial conditions $J(0)=0 \in T_pM$ and $D_t J(0)=w \in T_v(T_pM) \cong T_pM$ is given by
    \begin{equation}
        \label{eq:vanishingJacobigeneral}
        J(t)=\D(\exp_p)_{tv}(tw).
    \end{equation}
    The concept is visualized in Figure~\ref{fig:Jacobifield}.
    \begin{figure}[ht]
        \centering
        \includegraphics{./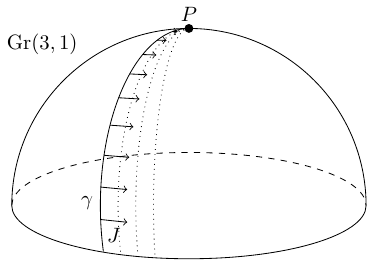}
        \caption{The Jacobi field $J$ points from the geodesic $\gamma$ towards close-by geodesics (dotted) and vanishes at $P$. Note that $J(t) \in T_{\gamma(t)}\Gr(3,1)$ is a tangent vector and not actually the offset vector between points on the respective geodesics in $\R^3$. Nevertheless, $J$ is the variation field of a variation of $\gamma$ through geodesics, c.f. \cite[Proposition 10.4]{Lee2018riemannian}.}
        \label{fig:Jacobifield}
    \end{figure}
    
    By making use of the derivative of the exponential mapping derived in Proposition\nobreakspace\ref{prop:derivative_Grassmannexp}, we can state the following proposition for Jacobi fields vanishing at a point on the Grassmannian.
    
    \begin{proposition}
    \label{prop:Jacobifields}
        Let $P=UU^T\in \Gr(n,p)$ and let $\Delta_1,\Delta_2 \in T_P\Gr(n,p)$ be two tangent vectors, where the singular values of $(\Delta_1)^\hor_U$ are mutually distinct and non-zero. Define the geodesic $\gamma$ by $\gamma(t):=\Exp_P^\Gr(t\Delta_1)$. Furthermore, let $(t\Delta_1)^\hor_U=Q(t\Sigma)V^T$ and $(t(\Delta_1+s\Delta_2))^\hor_U=Q(s)(t\Sigma(s))V(s)^T$
        be given via the compact SVDs of the horizontal lifts, i.e., $Q(s) \in \St(n,p)$, $\Sigma(s)=\diag(\sigma_1(s),\dots,\sigma_p(s))$ and $V(s) \in \O(p)$, as well as $Q(0)=Q, \Sigma(0)=\Sigma$ and $V(0)=V$.
        Finally, define
        \begin{equation*}
            Y(t) := UV \cos(t\Sigma) + Q\sin(t\Sigma) \in \St(n,p)
        \end{equation*} 
        and
        \begin{equation*}
            \Gamma(t):= U \dot{V}\cos(t\Sigma) - tUV\sin(t\Sigma)\dot{\Sigma} + \dot{Q}\sin(t\Sigma)+ t Q\cos(t\Sigma)\dot{\Sigma} \in T_{Y(t)}\St(n,p).\footnote{As in the case of the derivative of the Grassmann exponential, the matrices $\dot Q=\frac{\D Q}{\D s}(0), \dot \Sigma =\frac{\D \Sigma}{\D s}(0)$ and $\dot V=\frac{\D V}{\D s}(0)$ can be calculated by Algorithm~\ref{alg:SVDdiff}.}
        \end{equation*}
        Then the Jacobi field $J$ along $\gamma$ fulfilling $J(0)=0$ and $D_tJ(0)=\Delta_2$ is given by
        \begin{equation*}
            J(t)=\Gamma(t)Y(t)^T+Y(t)\Gamma(t)^T \in T_{\gamma(t)}\Gr(n,p).
        \end{equation*}
        The horizontal lift of $J(t)$ to $Y(t)$ is accordingly given by 
        \begin{equation*}
            \left(J(t)\right)^\hor_{Y(t)}=\Gamma(t) + Y(t)\Gamma(t)^TY(t) = (I_n-Y(t)Y(t)^T)\Gamma(t).
        \end{equation*}
        It is the variation field of the variation of $\gamma$ through geodesics given by $\Xi(s,t):=\Exp_P^\Gr(t(\Delta_1+s\Delta_2))$. 
        \begin{proof}
            The proof works analogously to the one of Proposition\nobreakspace\ref{prop:derivative_Grassmannexp}, since according to \eqref{eq:vanishingJacobigeneral}
            \begin{equation*}
                J(t)=\D (\Exp_P^\Gr)_{t\Delta_1}(t\Delta_2)=\frac{\D}{\D s}\Big\vert_{s=0} \Exp_P^\Gr(t(\Delta_1 + s\Delta_2)).
            \end{equation*}
        \end{proof}
    \end{proposition}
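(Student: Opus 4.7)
The plan is to derive the result essentially as a corollary of Proposition~\ref{prop:derivative_Grassmannexp} together with the classical characterization of Jacobi fields vanishing at one endpoint. Concretely, the first step is to invoke the standard formula \eqref{eq:vanishingJacobigeneral}, which states that on any Riemannian manifold, the unique Jacobi field $J$ along the geodesic $\gamma_v(t)=\exp_p(tv)$ with $J(0)=0$ and $D_tJ(0)=w$ is
\[
J(t)=\D(\exp_p)_{tv}(tw).
\]
Applied to $\Gr(n,p)$ at the base point $P$ with $v=\Delta_1$ and $w=\Delta_2$, this gives $J(t)=\D(\Exp_P^\Gr)_{t\Delta_1}(t\Delta_2)$.

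Next, I would evaluate this differential using Proposition~\ref{prop:derivative_Grassmannexp}, which is applicable because the singular values of $(\Delta_1)^\hor_U$ are, by hypothesis, mutually distinct and non-zero (hence the same is true of those of $(t\Delta_1)^\hor_U$ for any $t>0$). The key bookkeeping step is to match the SVD data. Since $\bigl(t\Delta_1+s(t\Delta_2)\bigr)^\hor_U = t(\Delta_1+s\Delta_2)^\hor_U$, a compact SVD of the perturbed base argument is $Q(s)\bigl(t\Sigma(s)\bigr)V(s)^T$, with exactly the matrices $Q(s),\Sigma(s),V(s)$ introduced in the statement. Consequently, the SVD derivatives appearing in Proposition~\ref{prop:derivative_Grassmannexp} become $\dot Q$, $t\dot\Sigma$, and $\dot V$.

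Substituting these into the definitions of the matrices $Y$ and $\Gamma$ of Proposition~\ref{prop:derivative_Grassmannexp} reproduces the $Y(t)$ and $\Gamma(t)$ of the present statement verbatim; in particular, the second and fourth summands in $\Gamma(t)$ carry the extra factor $t$ exactly because the effective singular value matrix to be differentiated is $t\Sigma(s)$. Applying \eqref{eq:GrassExpDiffStiefel} then gives
\[
J(t)=\D(\Exp_P^\Gr)_{t\Delta_1}(t\Delta_2)=\Gamma(t)Y(t)^T+Y(t)\Gamma(t)^T,
\]
and \eqref{eq:derivative_Grassmannexp} yields the stated horizontal lift formula. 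Finally, the identification with the variation field of $\Xi(s,t):=\Exp_P^\Gr(t(\Delta_1+s\Delta_2))$ is automatic, since for each fixed $s$ the curve $t\mapsto\Xi(s,t)$ is a geodesic, and $\partial_s\Xi(s,t)|_{s=0}$ satisfies the correct initial conditions.

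The only non-routine point is the consistency check in the middle paragraph: one must confirm that the SVD of $(t\Delta_1+s\cdot t\Delta_2)^\hor_U$ really does factor as $Q(s)\cdot t\Sigma(s)\cdot V(s)^T$ with the very same singular-vector curves as for $(\Delta_1+s\Delta_2)^\hor_U$, so that differentiation in $s$ at $s=0$ produces $\dot Q,\dot V$ independent of $t$. This is immediate because multiplying a matrix by a positive scalar rescales its singular values while leaving both sets of singular vectors unchanged, so no additional analysis is required.
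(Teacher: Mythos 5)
Your proposal is correct and follows essentially the same route as the paper: the paper's proof likewise reduces the claim to $J(t)=\D(\Exp_P^\Gr)_{t\Delta_1}(t\Delta_2)$ via \eqref{eq:vanishingJacobigeneral} and then argues ``analogously to Proposition~\ref{prop:derivative_Grassmannexp}''. Your explicit bookkeeping of the SVD of $t(\Delta_1+s\Delta_2)^\hor_U$ (singular vectors unchanged under positive scaling, hence $\dot Q,\dot V$ independent of $t$ and the extra factor $t$ on $\dot\Sigma$) is exactly the detail the paper leaves implicit.
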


\subsection{Conjugate Locus}
    In the following, let $p \leq \frac{n}{2}$. The reason for this restriction is that for $p > \frac{n}{2}$ there are automatically principal angles equal to zero, yet these do not contribute to the conjugate locus, as one can see by switching to the orthogonal complement. We will see that the conjugate locus $\Conj_P$ of $P \in \Gr(n,p)$ is given by all $F \in \Gr(n,p)$ such that at least two principal angles between $P$ and $F$ coincide, or there is at least one principal angle equal to zero if $p<\frac{n}{2}$. This obviously includes the case of two or more principal angles equal to $\frac{\pi}{2}$. In the complex case, the conjugate locus also includes points with one principal angle of $\frac{\pi}{2}$, as is shown in \cite{Berceanu1997}. Only in the cases of principal angles of $\frac{\pi}{2}$ is there a nontrivial Jacobi field vanishing at $P$ and $F$ along a \emph{shortest} geodesic. It can be calculated from the variation of geodesics as above. In the other cases, the shortest geodesic is unique, but we can smoothly vary longer geodesics from $P$ to $F$. This variation is possible because of the periodicity of sine and cosine and the indeterminacies of the SVD.
    \begin{theorem}
    \label{thm:conjugatelocus}
        Let $P \in \Gr(n,p)$ where $p \leq \frac{n}{2}$. The conjugate locus $\Conj_P$ of $P$ consists of all points $F \in \Gr(n,p)$ with at least two identical principal angles or, when $p < \frac{n}{2}$, at least one zero principal angle between $F$ and $P$.
        \begin{proof}
          Let $P$ and $F$ have $r=j-i+1$ repeated principal angles $\sigma_i = \dots = \sigma_j$. Obtain $\Delta^\hor_U=\hat{Q}\Sigma R^T$ by Algorithm \ref{alg:modgrasslog}. Define $\Sigma'$ by adding $\pi$ to one of the repeated angles. Then for every $D \in \O(r)$ and $\tilde{D}:=\diag(I_{i-1},D,I_{p-j})$, the curve
         \begin{equation*}
            \gamma_D(t)=\revcomm{\pi^\SG\Big(}UR\tilde{D}\cos(t\Sigma')\tilde{D}^TR^T + \hat{Q}\tilde{D}\sin(t\Sigma')\tilde{D}^TR^T\revcomm{\Big)}
         \end{equation*}
         is a geodesic from $P$ to $\gamma_D(1)=F$\revcomm{, with projection $\pi^\SG$ from~\eqref{eq:piSG}}. Since for $0<t<1$ the matrix $\cos(t\Sigma')$ does not have the same number of repeated diagonal entries as $\cos(t\Sigma)$, not all curves $\gamma_D$ coincide. Then we can choose an open interval $\mathcal{I}$ around $0$ and a smooth curve $D \colon \mathcal{I} \to \O(r)$ with $D(0) = I_r$ such that $\Gamma(s,t) = \gamma_{D(s)}(t)$ is a variation through geodesics as defined in \cite[Chap. 10, p. 284]{Lee2018riemannian}. The variation field $J$ of $\Gamma$ is a Jacobi field along $\gamma_{D(0)}$ according to \cite[Theorem 10.1]{Lee2018riemannian}. Furthermore, $J$ is vanishing at $t=0$ and $t=1$, as $\gamma_{D(s)}(1)=\gamma_{D(\tilde{s})}(1)$ for all $s,\tilde s \in \mathcal{I}$ by Proposition \ref{prop:ambiguitySVD}, and likewise for $t=0$. Since $J$ is not constantly vanishing, $P$ and $F$ are conjugate along $\gamma_{D(0)}$ by definition.
         
         When $p<\frac{n}{2}$ and there is at least one principal angle equal to zero, there is some additional freedom of variation. Let the last $r$ principal angles between $P$ and $F$ be $\sigma_{p-r+1}=\dots=\sigma_p=0$. Obtain $\Delta^\hor_U=\hat{Q}\Sigma R^T$ by Algorithm~\ref{alg:modgrasslog}. Since $p<\frac{n}{2}$, $\hat{Q}$ can be chosen such that $U^T\hat{Q}=0$, and there is at least one unit vector $\hat{q} \in \R^n$, such that $\hat{q}$ is orthogonal to all column vectors in $U$ and in $\hat{Q}$. Let $\hat{Q}_\perp$ be an orthogonal completion of $\hat{Q}$ with $\hat{q}$ as its first column vector. Define $\Sigma'$ as the matrix $\Sigma$ with $\pi$ added to the $(p-r+1)$th diagonal entry. Then for every $W \in \O(2)$,
        \begin{equation*}
            \gamma_{W}(t)=\revcomm{\pi^\SG\Bigg(}UR\cos(t\Sigma')+\begin{pmatrix} \hat{Q} & \hat{Q}_\perp\end{pmatrix}\diag(I_{p-r},W,I_{n-p+r-2})\begin{pmatrix}\sin(t\Sigma')\\ 0\end{pmatrix}\revcomm{\Bigg)}
        \end{equation*}
        is a geodesic from $P$ with $\gamma_{W}(1)=F$. With an argument as above, $P$  and $F$ are conjugate along $\gamma_{I_2}$.
        
        There are no other points in the conjugate locus than those with repeated principal angles (or one zero angle in case of $p<\frac{n}{2}$), as the SVD is unique (up to order of the singular values) for matrices with no repeating and no zero singular values. As every geodesic on the Grassmannian is of the form \eqref{eq:GrassmannExpSt}, the claim can be shown by contradiction.
        \end{proof}
    \end{theorem}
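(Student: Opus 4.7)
My plan is to characterize the conjugate locus by exhibiting, in the ``if'' direction, explicit one-parameter variations through geodesics from $P$ to $F$ with endpoints held fixed, and, in the ``only if'' direction, arguing that such variations cannot exist once the principal angles are generic. The central tool throughout is the closed-form geodesic from Proposition~\ref{prop:GrassExp_Stiefel}, $\Exp^\Gr_P(t\Delta) = [UV\cos(t\Sigma) + \hat Q \sin(t\Sigma)]$, together with Proposition~\ref{prop:ambiguitySVD} (from the appendix), which records precisely the ambiguity in the SVD $\Delta^\hor_U = \hat Q \Sigma V^T$. A Jacobi field vanishing at $P$ and $F$ along some geodesic $\gamma$ from $P$ to $F$ is, by \cite[Prop.~10.4, Prop.~10.10]{Lee2018riemannian}, nothing other than the variation field of a smooth one-parameter family of geodesics whose endpoints agree with those of $\gamma$ to first order at $s=0$, so producing such a family proves conjugacy.

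For the ``if'' direction, I would use two observations: (a) on the Grassmannian, replacing a single diagonal entry $\sigma_i$ of $\Sigma$ by $\sigma_i + \pi$ flips the sign of the $i$-th column of the Stiefel representative in~\eqref{eq:GrassmannExpStRep}, but leaves the equivalence class $[\cdot]$ unchanged at $t=1$; (b) in every block where the entries of $\Sigma$ coincide, the right factor $V$ in the SVD may be post-multiplied by any element of $\O(r)$ without changing $\Delta$. Given a block of $r$ coincident angles, pick the modified diagonal $\Sigma'$ (one angle shifted by $\pi$) and, for $D \in \O(r)$, form $\tilde D = \diag(I,D,I)$ and the geodesic $\gamma_D(t) = [UR\tilde D \cos(t\Sigma')\tilde D^T R^T + \hat Q \tilde D\sin(t\Sigma')\tilde D^T R^T]$. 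By (a) each $\gamma_D$ satisfies $\gamma_D(0)=P$, $\gamma_D(1)=F$; by (b) the different $D$'s genuinely give different curves for $0<t<1$, since $\cos(t\Sigma')$ is no longer a scalar multiple of the identity on the block. Letting $s \mapsto D(s)$ be any smooth curve in $\O(r)$ with $D(0)=I_r$ and nonzero derivative, the resulting variation field is a nontrivial Jacobi field vanishing at both endpoints. In the case $p<n/2$ with a zero principal angle, I would exploit the extra dimension available in the orthogonal complement of $[U\ \hat Q]$: one can pad $\hat Q$ with an additional unit vector $\hat q$ and rotate in the plane spanned by $\hat q$ and the ``$\pi$-shifted'' column of $\hat Q$ by some $W\in\O(2)$; the same endpoint argument then yields a second smooth family of geodesics between $P$ and $F$.

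For the converse, I would argue by contradiction. Suppose $F$ has mutually distinct, positive principal angles from $P$ (and $p=n/2$, or no zero angle otherwise). By Proposition~\ref{prop:ambiguitySVD} the SVD of any $\Delta^\hor_U$ for which $\Exp^\Gr_P(\Delta) = F$ is then uniquely determined up to signs of singular vectors, and the diagonal entries of $\Sigma$ are fixed modulo $\pi$. Every geodesic from $P$ to $F$ therefore belongs to a discrete, not continuous, collection (one representative per choice of $\pi$-shifts and sign flips), which prevents any smooth one-parameter variation of geodesics with common endpoints and nonzero initial variation field. Equivalently, one can invoke Proposition~\ref{prop:derivative_Grassmannexp}: the derivative $\D(\Exp^\Gr_P)_\Delta$ is injective precisely when these degeneracies are absent, so the Jacobi field $J(t)=\D(\Exp^\Gr_P)_{t\Delta}(t\tilde\Delta)$ cannot vanish at $t=1$ unless $\tilde\Delta=0$. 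The main obstacle I anticipate is the bookkeeping for partial degeneracies, in particular verifying that the variation field constructed in the ``if'' part is truly nonzero along $\gamma$ (not merely at the endpoints), which requires the observation about the number of repeated diagonal entries of $\cos(t\Sigma')$ versus $\cos(t\Sigma)$ for $0<t<1$.
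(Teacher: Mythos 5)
Your proposal is correct and follows essentially the same route as the paper's proof: the $\pi$-shift of one repeated angle combined with the $\O(r)$ (respectively $\O(2)$) rotation freedom from the SVD ambiguity to build an endpoint-fixing variation through geodesics, and the uniqueness of the SVD for distinct nonzero singular values to rule out any other conjugate points. The only cosmetic difference is that you additionally mention Proposition~\ref{prop:derivative_Grassmannexp} as an equivalent way to phrase the converse, which the paper leaves at the same level of sketchiness as your discreteness argument.
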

    By construction, the length of $\gamma_{D(0)}$ between $P$ and $F$ is longer than the length of the shortest geodesic, since $\norm{\Sigma}_F < \norm{\Sigma'}_F$. The same is true for the case of a zero angle. It holds that the cut locus $\Cut_P$ is no subset of the conjugate locus $\Conj_P$, since points with just one principal angle equal to $\frac{\pi}{2}$ are not in the conjugate locus. Likewise the conjugate locus is no subset of the cut locus. The points in the conjugate locus that are conjugate along a \emph{minimizing} geodesic however are also in the cut locus, as those are exactly those with multiple principal angles equal to $\frac{\pi}{2}$. 
    \begin{remark}
        The (incomplete) treatment in \cite{Wong1968} covered only the cases of at least two principal angles equal to $\frac{\pi}{2}$ or principal angles equal to zero, but not the cases of repeated arbitrary principal angles. We can nevertheless take from there that for $p>\frac{n}{2}$ we need at least $2p-n+1$ principal angles equal to zero, instead of just one as for $p<\frac{n}{2}$. Points with repeated (nonzero) principal angles are however always in the conjugate locus, as the proof of Theorem \ref{thm:conjugatelocus} still holds for them.
    \end{remark}

\section{Conclusion}
\label{sec:Conclusion}
In this work, we have collected the facts and formulae that we deem most important for Riemannian computations on the Grassmann manifold.
This includes in particular explicit formulae and algorithms for computing local coordinates, the Riemannian normal coordinates (the Grassmann exponential and logarithm mappings), the Riemannian connection, the parallel transport of tangent vectors and the sectional curvature.
All these concepts may appear as building blocks 
or tools for the theoretical analysis of, e.g., optimization problems, interpolation problems and, more generally speaking, data processing problems such as data averaging or clustering.

We have treated the Grassmannian both as a quotient manifold of the orthogonal group and the Stiefel manifold, and as the space of orthogonal projectors of fixed rank and have exposed (and exploited) the connections between these view points.
While concepts from differential geometry arise naturally in the theoretical considerations, care has been taken that the final formulae are purely matrix-based and thus are fit for immediate use in algorithms.
At last, the paper features an original approach to computing the Grassmann logarithm, which simplifies the theoretical analysis, extends its operational domain and features improved numerical properties.
Eventually, this tool allowed us to conduct a detailed investigation of shortest curves to cut points as well as studying the conjugate points on the Grassmannian by basic matrix-algebraic means. These findings are more explicit and more complete than the previous results in the research literature.

\appendix
\section{Basics from Riemannian Geometry}
\label{sec:RiemannGeo}
    For the reader's convenience, we recap some fundamentals from Riemannian geometry. 
    Concise introductions can be found in \cite[Appendices C.3, C.4, C.5]{HelmkeMoore1994}, \cite{Gallier2011} and \cite{AbsilMahonySepulchre2008}.
    For an in-depth treatment, see for example
    \cite{DoCarmo2013riemannian, KobayashiNomizu1996, Lee2018riemannian}.

An {\em $n$-dimensional differentiable manifold} $\mcM$ is a topological space $\mcM$ such that for every point $p\in \mcM$,
there exists a so-called {\em coordinate chart} $x:\mcM\supset \mathcal{D}_p \rightarrow \R^n$ that bijectively maps an open neighborhood $\mathcal{D}_p\subset \mcM$ of a location $p$ to an open neighborhood $D_{x(p)}\subset \R^n$ around $x(p)\in \R^n$ 
with the additional property that the {\em coordinate change} 
$$x\circ \tilde{x}^{-1}: \tilde{x}(\mathcal{D}_p\cap\tilde{\mathcal{D}}_p) \rightarrow x(\mathcal{D}_p\cap\tilde{\mathcal{D}}_p)$$
of two such charts
$x,\tilde{x}$ is a diffeomorphism, where their domains of definition overlap, see \cite[Fig. 18.2, p. 496]{Gallier2011}.
This enables to transfer the most essential tools from calculus
to manifolds.
An {\em n-dimensional submanifold of $\R^{n+d}$} is a subset $\mcM\subset \R^{n+d}$ that can be locally smoothly straightened, i.e.\revcomm{,} satisfies the local $n$-slice condition \cite[Thm. 5.8]{Lee2012smooth}.
\begin{theorem}[{\cite[Prop. 18.7, p. 500]{Gallier2011}}]
\label{thm:regurbild}
 Let $h: \R^{n+d}\supset \Omega \rightarrow \R^{d}$ be differentiable and $c_0\in \R^d$ be defined such that 
 the differential $Dh_p\in \R^{d\times (n+d)}$ has maximum possible rank $d$ at every point $p\in \Omega$ with $h(p) = c_0$.
 Then, the preimage
 \[
  h^{-1}(c_0) = \{p\in \Omega \mid \hspace{0.1cm} h(p) = c_0\}
 \]
 is an $n$-dimensional submanifold of $\R^{n+d}$.
\end{theorem}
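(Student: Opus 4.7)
The plan is to invoke the implicit function theorem pointwise on $h^{-1}(c_0)$ to produce local slice charts, which by \cite[Thm. 5.8]{Lee2012smooth} exhibit $h^{-1}(c_0)$ as an $n$-dimensional submanifold of $\R^{n+d}$. Fix an arbitrary $p_0\in h^{-1}(c_0)$. Since $Dh_{p_0}\in\R^{d\times(n+d)}$ has rank $d$, some $d\times d$ minor is invertible; after a permutation of coordinates (which is a global diffeomorphism of $\R^{n+d}$ and hence preserves the submanifold property), we may assume that the last $d$ columns of $Dh_{p_0}$ form an invertible $d\times d$ block. Split coordinates as $\R^{n+d}=\R^n\times\R^d$ with $p=(u,v)$ and $p_0=(u_0,v_0)$, so that $\partial h/\partial v\,(p_0)\in\R^{d\times d}$ is invertible.

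By the implicit function theorem, there exist open neighborhoods $V\subset\R^n$ of $u_0$ and $W\subset\R^d$ of $v_0$ with $V\times W\subset\Omega$, together with a unique smooth map $g\colon V\to W$ with $g(u_0)=v_0$, such that for all $(u,v)\in V\times W$,
\begin{equation*}
h(u,v)=c_0\ \Longleftrightarrow\ v=g(u).
\end{equation*}
In particular the portion of the preimage near $p_0$ is exactly the graph of $g$.

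Next I would straighten this graph with an explicit chart. Define
\begin{equation*}
\varphi\colon V\times W\to\R^n\times\R^d,\qquad \varphi(u,v):=(u,\,v-g(u)).
\end{equation*}
Its Jacobian at every point is the block matrix $\begin{psmallmatrix}I_n & 0\\ -Dg_u & I_d\end{psmallmatrix}$, which is invertible, and an explicit inverse is $(u,w)\mapsto(u,w+g(u))$. Hence $\varphi$ is a diffeomorphism from the open neighborhood $V\times W$ of $p_0$ onto its open image in $\R^{n+d}$, and by the equivalence above it satisfies
\begin{equation*}
\varphi\bigl(h^{-1}(c_0)\cap(V\times W)\bigr)=\varphi(V\times W)\cap\bigl(\R^n\times\{0\}\bigr).
\end{equation*}
This is exactly the local $n$-slice condition at $p_0$.

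Since $p_0\in h^{-1}(c_0)$ was arbitrary, the local $n$-slice condition holds at every point of $h^{-1}(c_0)$, so by \cite[Thm. 5.8]{Lee2012smooth} the preimage is an embedded $n$-dimensional submanifold of $\R^{n+d}$. There is no real obstacle here: the only care required is reducing the full-rank hypothesis on $Dh_{p_0}$ to an invertible $d\times d$ block by a coordinate permutation, and then verifying that $\varphi$ is a genuine diffeomorphism (not merely a local one), which follows from writing down its inverse explicitly.
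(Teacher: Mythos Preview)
Your proof is correct and is the standard implicit-function-theorem argument for the regular value theorem. Note, however, that the paper does not actually supply a proof of this statement: it is quoted as a background result from \cite[Prop.~18.7]{Gallier2011} and immediately applied to the Stiefel manifold. So there is no ``paper's own proof'' to compare against here. That said, your argument dovetails nicely with the paper's conventions, since the paper \emph{defines} an $n$-dimensional submanifold of $\R^{n+d}$ via the local $n$-slice condition of \cite[Thm.~5.8]{Lee2012smooth}, and your chart $\varphi(u,v)=(u,\,v-g(u))$ verifies exactly that condition.
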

This theorem establishes the Stiefel manifold 
$\St(n,p) = \left\lbrace U\in\R^{n\times p}\ \middle| \ U^TU=I\right\rbrace$ as an embedded submanifold of $\R^{n\times p}$, since $\St(n,p) = F^{-1}(I)$ for $F:U\mapsto U^TU$.

\paragraph{Tangent Spaces}
 The {\em tangent space} of a submanifold $\mcM$ at a point $p\in \mcM$, in symbols $T_p\mcM$, is the space of 
 velocity vectors of differentiable curves $c:t \mapsto c(t)$ passing through $p$, i.e.,
 \[ 
  T_p\mcM = \{\dot{c}(t_0)\mid \hspace{0.1cm}  c:I\rightarrow \mcM,   \hspace{0.1cm} c(t_0)=p\}.
 \]
%
%
The tangent space is a vector space of the same dimension $n$ as the manifold $\mcM$.
\paragraph{Geodesics and the Riemannian Distance Function}
\label{sec:diffgeo_geodesics}
Riemannian metrics measure the lengths and angles
between tangent vectors. Eventually, this allows to measure 
the lengths of curves on a manifold and the Riemannian distance
between two manifold locations.

  A {\em Riemannian metric} on $\mcM$ is a family $(\revcomm{g_p(\cdot,\cdot)})_{p\in\mcM}$ of inner products $\revcomm{g_p(\cdot,\cdot)}: T_p\mcM\times T_p\mcM\rightarrow \R$ that is smooth in variations of the base point $p$\revcomm{, or more precisely, a smooth covariant 2-tensor field, c.f.~\cite[Chapter 2]{Lee2018riemannian}}.
  The {\em length} of a tangent vector $v\in T_p\mcM$ is $\|v\|_p := \sqrt{\revcomm{g_p(v,v)}}$.
  The length of a curve $c:[a,b] \rightarrow \mcM$ is defined as $$L(c) = \int_a^b \|\dot{c}(t)\|_{c(t)} dt  = \int_a^b \sqrt{\revcomm{g_{c(t)}(\dot{c}(t),\dot{c}(t))}} dt.$$\\
  A curve is said to be {\em parameterized by the arc length}, if $L(c|_{[a,t]}) = t-a$ for all $t\in [a,b]$. Obviously, {\em unit-speed curves} with $\|\dot{c}(t)\|_{c(t)}\equiv 1$ are parameterized by the arc length.
  Constant-speed curves with $\|\dot{c}(t)\|_{c(t)}\equiv \nu_0$ are parameterized proportional to the arc length.
  The {\em Riemannian distance} between two points $p,q\in \mcM$ with respect to a given metric is
  \begin{equation}
\label{eq:RiemannDist}
  \dist_{\mcM}(p,q) = \inf\{L(c)\mid c:[a,b]\rightarrow \mcM \mbox{ piecewise smooth, } c(a)=p, c(b)=q\},
  \end{equation}
  where, by convention, $\inf\{\emptyset\} =\infty$. 
A shortest path between $p,q\in \mcM$ is a curve $c$ that connects $p$ and $q$ such that 
$L(c) = \dist_{\mcM}(p,q)$.
Candidates for shortest curves between points 
are called {\em geodesics} and are
characterized by a differential equation:
 A differentiable curve $c:[a,b]\rightarrow \mcM$ is a geodesic (w.r.t. to a given Riemannian metric),
 if the {\em  covariant derivative} of its velocity vector field vanishes, i.e., 
 \begin{equation}
 \label{eq:geodesic}
  \frac{D\dot{c}}{dt}(t) = 0 \quad \forall t\in [a,b].
 \end{equation}
Intuitively, the covariant derivative can be thought of
as the standard derivative (if it exists) followed by a 
point-wise projection onto the tangent space.
In general, a covariant derivative, also known as a {\em linear connection},
is a bilinear mapping $(X,Y) \mapsto \nabla_XY$ that maps two vector fields $X,Y$ to a third vector field
$\nabla_XY$ in such a way that it can be interpreted as the directional derivative of $Y$ in the direction of $X$, \cite[\S 4, \S 5]{Lee2018riemannian}.
Of importance is the {\em Riemannian connection} or {\em Levi-Civita connection} that is compatible with a Riemannian metric \cite[Thm 5.3.1]{AbsilMahonySepulchre2008}, \cite[Thm 5.10]{Lee2018riemannian}. 
It is determined uniquely by the Koszul formula
\begin{eqnarray*}
   \label{eq:koszul}
   2\revcomm{g(\nabla_XY, Z)}  &=& X(\revcomm{g( Y, Z )} ) + Y(\revcomm{g( Z,X)} ) - Z(\revcomm{g(X, Y )})\\
   \nonumber
   && - \revcomm{g( X, [Y,Z] )} - \revcomm{g( Y, [X,Z] )} + \revcomm{g( {Z}, {[X,Y]})}
\end{eqnarray*}
and is used to define the {\em Riemannian curvature tensor}
$$
(X,Y,Z)\mapsto R(X,Y)Z=\nabla_X\nabla_YZ-\nabla_Y\nabla_XZ-\nabla_{[X,Y]}Z.\footnote{In these formulae, $[X,Y] = X(Y)-Y(X)$ is the Lie bracket of two vector fields.}
$$
A Riemannian manifold is flat if and only if it is locally isometric to the Euclidean space, which holds if and only if the Riemannian curvature tensor vanishes identically \cite[Thm. 7.10]{Lee2018riemannian}.

\paragraph{Lie Groups and Orbits}
\label{app:LieGroups}
    A \emph{Lie group} is a smooth manifold that is also a group with smooth multiplication and inversion. A {\em matrix Lie group} $G$ is a subgroup of the general linear group $GL(n,\C)$ that is closed in $GL(n,\C)$ (but not necessarily in the ambient space $\C^{n\times n}$). Basic examples include  $GL(n,\R)$ and the orthogonal group $\O(n)$.
    Any matrix Lie group $G$ is automatically an embedded submanifold of $\C^{n\times n}$ \cite[Corollary 3.45]{Hall_Lie2015}. The tangent space $T_IG$ of $G$ at the identity $I\in G$ has a special role. When endowed with the bracket operator or {\em matrix commutator} $[V,W] = VW-WV$ for  $V,W \in T_IG$, the tangent space becomes an algebra, called the {\em Lie algebra} associated with the Lie group $G$, see \cite[\S 3]{Hall_Lie2015}. As such, it is denoted by $\mathfrak{g} = T_IG$.
    For any $A\in G$, the function ``left-multiplication with $A$'' is a diffeomorphism
    $L_A\colon G\to G,\ L_A(B) = AB$; its differential at a point $B\in G$ is the isomorphism
    $\D (L_A)_B\colon T_BG\to T_{L_A(B)}G,\ \D (L_A)_B(V) = AV$. Using this observation at $B=I$ shows that the
    tangent space at an arbitrary location $A\in G$ is given by the translates (by left-multiplication) of the 
    tangent space at the identity \cite[\S 5.6, p. 160]{godement2017introduction},
    \begin{equation}
    \label{eq:tangspaceshifts}
      T_AG = T_{L_A(I)}G =  A \mathfrak{g} = \left\{\Delta = AV\in \R^{n\times n}|\hspace{0.2cm} V\in \mathfrak{g}\right\}.
    \end{equation}
    
    A \emph{smooth left action} of a Lie group $G$ on a manifold $M$ is a smooth map $\phi \colon G \times M \to M$ fulfilling $\phi(g_1,\phi(g_2,p))=\phi(g_1g_2,p)$ and $\phi(e,p)=p$ for all $g_1,g_2 \in G$ and all $p \in M$, where $e \in G$ denotes the identity element. One often writes $\phi(g,p) = g \cdot p$. For each $p \in M$, the \emph{orbit of $p$} is defined as
    \begin{equation}
        G \cdot p := \{ g \cdot p \mid g \in G \},
    \end{equation}
    and the \emph{stabilizer of $p$} is defined as
    \begin{equation}
        G_p := \{ g \in G \mid g \cdot p = p\}.
    \end{equation}
    For a detailed introduction see for example \cite[\revcomm{Chapters 7 \& 21}]{Lee2012smooth}. We need the following well known result, see for example \cite[Section 2.1]{HelmkeMoore1994}, where the quotient manifold $G/G_p$ refers to the set $\{g G_p \mid g \in G\}$ endowed with the unique manifold structure that turns the quotient map $g\mapsto g G_p$ into a submersion.
    \begin{proposition}
        \label{prop:orbitmanifold}
        Let $G$ be a compact Lie group acting smoothly on a manifold $M$. Then for any $p \in M$, the orbit $G \cdot p$ is an embedded submanifold of $M$ that is diffeomorphic to the quotient manifold $G/G_p$.
        \begin{proof}
            The continuous action of a compact Lie group is always proper, \cite[Corollary 21.6]{Lee2012smooth}. Therefore \cite[Proposition 3.41]{Alexandrino2015} shows the claim. 
        \end{proof}
    \end{proposition}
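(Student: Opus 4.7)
The plan is to establish the result via the equivariant rank theorem and the orbit-stabilizer construction, using that compact Lie groups act properly. First I would consider the \emph{orbit map} $\theta_p\colon G \to M$, $g \mapsto g \cdot p$, whose image is precisely the orbit $G \cdot p$ and whose fibers are the left cosets of the stabilizer $G_p$. Since $G$ is compact, $G_p$ is a closed subgroup, hence itself a Lie subgroup (closed subgroup theorem), so the homogeneous space $G/G_p$ carries a canonical smooth manifold structure for which the quotient map $\pi\colon G \to G/G_p$ is a smooth submersion.

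Next I would show that $\theta_p$ has constant rank. The key observation is equivariance: for any $h \in G$ one has $\theta_p \circ L_h = \phi_h \circ \theta_p$, where $L_h$ is left translation on $G$ and $\phi_h(q) = h\cdot q$ is the diffeomorphism of $M$ induced by $h$. Differentiating and using that both $L_h$ and $\phi_h$ are diffeomorphisms forces $\D(\theta_p)_g$ to have the same rank at every $g \in G$. With constant rank in hand, the fiber $\theta_p^{-1}(p) = G_p$ is a closed embedded submanifold (global rank theorem), and $\theta_p$ descends through $\pi$ to a smooth injective map $\tilde\theta_p\colon G/G_p \to M$ which is an immersion of constant rank equal to $\dim G - \dim G_p$.

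The remaining and genuinely nontrivial step is to upgrade this injective immersion $\tilde\theta_p$ to a smooth embedding, so that its image $G\cdot p$ inherits the structure of an embedded submanifold diffeomorphic to $G/G_p$. Here I would invoke properness of the action: the map $G \times M \to M \times M$, $(g,q) \mapsto (g\cdot q, q)$, has the property that preimages of compact sets are compact (for a compact $G$ this is immediate since the first factor already lies in the compact set $G$). Properness of the action passes to properness of $\tilde\theta_p$ as a map from $G/G_p$ into $M$. A proper injective immersion is a topological embedding, and a topological embedding that is also an immersion is a smooth embedded submanifold chart by chart. This is the main obstacle of the argument, since without properness one only obtains an immersed (initial) submanifold, as the standard counterexample of an irrational line on the torus shows.

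Finally I would conclude by recording that $\tilde\theta_p\colon G/G_p \to G\cdot p \subset M$ is a smooth bijection that is a local diffeomorphism onto its image, hence a diffeomorphism onto the embedded submanifold $G\cdot p$. This gives both assertions of the proposition simultaneously. Throughout, the compactness hypothesis is used only to guarantee properness of the action (and closedness of $G_p$), so the same argument in fact works whenever the action is proper.
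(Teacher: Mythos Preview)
Your argument is correct and is essentially the same approach as the paper's: the paper simply cites that a compact Lie group action is automatically proper and then invokes a standard reference for the orbit result, whereas you have unpacked that reference by running the equivariant constant-rank argument and using properness to upgrade the injective immersion $\tilde\theta_p$ to an embedding. There is nothing to add.
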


\section{Matrix Analysis Necessities}
\label{app:MatrixBasics}
Throughout, we consider the matrix space $\R^{m\times n}$ as a Euclidean vector space with the standard metric 
\begin{equation}
 \label{eq:EuclideanMetric}
 \langle A,B\rangle_0 = \tr(A^TB).
\end{equation}
Unless noted otherwise, the singular value decomposition (SVD) of a matrix $X\in\R^{m\times n}$
is understood to be the compact SVD
\[
    X = U\Sigma V^T, \quad U\in \R^{m\times n}, \Sigma, V \in \R^{n\times n}.
\]
The SVD is not unique.
    \begin{proposition}[Ambiguity of the Singular Value Decomposition]\cite[Theorem 3.1.1']{HornJohnson1991}
    \label{prop:ambiguitySVD}
        Let $X \in \R^{m \times n}$ have a (full) SVD $X=U \Sigma V^T$ with singular values in descending order and $\rank(X)=r$. Let $\sigma_1>\dots >\sigma_k>0$ be the distinct nonzero singular values with respective multiplicity $\mu_1,\dots,\mu_k$. Then $X=\tilde{U}\Sigma \tilde{V}^T$ is another SVD if and only if $\tilde{U}=U\diag(D_1,\dots,D_k,W_1)$ and $\tilde{V}=V\diag(D_1,\dots,D_k,W_2)$, with $D_i \in \O(\mu_i)$, $W_1 \in \O(m-r)$, and $W_2 \in \O(n-r)$ arbitrary.
    \end{proposition}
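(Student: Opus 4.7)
The plan is to reduce the problem to a commutation statement about orthogonal matrices with a block-diagonal matrix, and then match entries block by block.

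For the ``if'' direction, I would simply verify by direct matrix multiplication that $\tilde U \Sigma \tilde V^T = X$. Writing $D := \diag(D_1,\dots,D_k,W_1)$ and $E := \diag(D_1,\dots,D_k,W_2)$, both are orthogonal (of appropriate sizes) since each block is orthogonal, so $\tilde U \in \O(m)$ and $\tilde V \in \O(n)$. The only nontrivial point is to check that $D \Sigma E^T = \Sigma$. Since the nonzero part of $\Sigma$ is block-diagonal with blocks $\sigma_i I_{\mu_i}$, the $i$th block of $D\Sigma E^T$ is $\sigma_i D_i I_{\mu_i} D_i^T = \sigma_i I_{\mu_i}$, and the rows/columns multiplying the zero block of $\Sigma$ contribute nothing.

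For the ``only if'' direction, suppose $\tilde U \Sigma \tilde V^T = U\Sigma V^T$. Setting $P := U^T \tilde U \in \O(m)$ and $Q := V^T \tilde V \in \O(n)$, this rearranges to
\begin{equation*}
    \Sigma = P \Sigma Q^T.
\end{equation*}
The idea is to pass to the symmetric squared objects: multiplying by the transpose gives $\Sigma \Sigma^T = P \Sigma \Sigma^T P^T$ and $\Sigma^T \Sigma = Q \Sigma^T \Sigma Q^T$, so $P$ commutes with $\Sigma\Sigma^T$ and $Q$ commutes with $\Sigma^T \Sigma$. Both of these diagonal matrices have eigenspaces precisely the coordinate subspaces associated with the distinct eigenvalues $\sigma_1^2, \dots, \sigma_k^2$ (with multiplicities $\mu_1,\dots,\mu_k$) and the eigenvalue $0$ (with multiplicity $m-r$ for $\Sigma\Sigma^T$ and $n-r$ for $\Sigma^T\Sigma$). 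Any orthogonal matrix commuting with a symmetric matrix must preserve its eigenspaces, hence
\begin{equation*}
    P = \diag(D_1,\dots,D_k,W_1), \qquad Q = \diag(E_1,\dots,E_k,W_2),
\end{equation*}
with $D_i, E_i \in \O(\mu_i)$, $W_1\in \O(m-r)$, $W_2 \in \O(n-r)$.

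The final step is to show $D_i = E_i$. Substituting the block forms back into $\Sigma = P \Sigma Q^T$ and comparing the $i$th diagonal block gives $\sigma_i I_{\mu_i} = \sigma_i D_i E_i^T$, which forces $D_i E_i^T = I_{\mu_i}$, i.e., $E_i = D_i$. Since $\tilde U = UP$ and $\tilde V = VQ$, this yields the claimed form. I expect the main (mild) obstacle to be the commutation-to-block-diagonal step, which really relies on the fact that we only have a block structure because the eigenspaces of $\Sigma\Sigma^T$ are exactly the spans of coordinate axes grouped by equal singular values; the freedom within each block is precisely the orthogonal group of that eigenspace, while the zero eigenspaces of the two squared matrices are independent, accounting for the separate factors $W_1, W_2$.
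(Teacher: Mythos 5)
Your proof is correct and complete. The paper itself offers no proof of this proposition --- it is quoted verbatim from Horn and Johnson \cite[Theorem 3.1.1']{HornJohnson1991} --- so there is no in-paper argument to compare against; your self-contained derivation is the standard one. All steps check out: the ``if'' direction reduces to $D\Sigma E^T=\Sigma$, which holds blockwise because the nonzero part of $\Sigma$ is $\diag(\sigma_1 I_{\mu_1},\dots,\sigma_k I_{\mu_k})$; for the ``only if'' direction, passing from $\Sigma = P\Sigma Q^T$ to the commutation relations $P\,\Sigma\Sigma^T = \Sigma\Sigma^T P$ and $Q\,\Sigma^T\Sigma = \Sigma^T\Sigma\, Q$ correctly forces the block-diagonal structure (an orthogonal matrix commuting with a diagonal matrix must vanish on every entry linking two distinct diagonal values, and an orthogonal block-diagonal matrix has orthogonal blocks), and the final comparison $\sigma_i I_{\mu_i} = \sigma_i D_i E_i^T$ pins down $E_i = D_i$ while leaving $W_1, W_2$ free on the respective null spaces.
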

%
\paragraph{Differentiating the Singular Value Decomposition}
\label{app:diffSVD}
Let $p\leq n\in \N$ and suppose that $t \mapsto Y(t)\in \R^{n\times p}$ is a differentiable matrix curve around $t_0\in \R$.
If the singular values of $Y(t_0)$ are mutually distinct and non-zero, then
the singular values and both the left and the right singular vectors 
depend differentiable on $t \in [t_0 -\delta t, t_0+\delta  t]$
for $\delta t$ small enough. 

Let $ t \mapsto Y(t) = U( t)\Sigma( t) V( t)^T \in \R^{n\times p}$, 
where $U(t)\in \St(n,p)$, $V( t)\in O(p)$ and $\Sigma( t)\in\R^{p\times p}$ diagonal and positive definite.
Let $u_j$ and $v_j$, $j=1,\ldots,p$ denote the columns of $U( t_0)$ and $V( t_0)$, respectively.
For brevity, write $Y = Y(t_0), \dot{Y} = \frac{\D}{\D t}\big\vert_{t=t_0}Y(t)$, likewise for the other matrices that feature in the SVD.
The derivatives of the matrix factors of the SVD can be calculated with Alg. \ref{alg:SVDdiff}. A proof can for example be found in \cite{HayBorggaardPelletier2009,DieciEirola1999}.
\begin{algorithm}
\caption{Differentiating the SVD}
\label{alg:SVDdiff}
\begin{algorithmic}[1]
  \Require{Matrices $Y, \dot{Y}\in\R^{n\times p}$, (compact) SVD $Y = U\Sigma V^T$.}
  \State{ $\dot{\sigma}_j = (u_j)^T \dot{Y} v_j\mbox{ for }j = 1,\ldots, p$}
  \State{ $\dot{V} =V \Gamma, \mbox{ where } \Gamma_{ij} =
            \left\{
                \begin{array}{ll}
                 \frac{ \sigma_i (u_i^T \dot{Y} v_j) +  \sigma_j(u_j^T\dot{Y} v_i) }{(\sigma_j + \sigma_i)(\sigma_j - \sigma_i)}, & i\neq j\\
                0,          & i=j
                \end{array}
            \right.  \mbox{ for }i,j = 1,\ldots, p$}
  \State{ $\dot{U} = \left(\dot{Y} V + U(\Sigma \Gamma - \dot{\Sigma}) \right) \Sigma^{-1}.$}
  \Ensure{$\dot{U}, \dot\Sigma=\diag(\dot\sigma_1,\ldots,\dot\sigma_m), \dot{V}$}
\end{algorithmic}
\end{algorithm}

\paragraph{Differentiating the QR-Decomposition}
\label{app:diffQR}
Let $t\mapsto Y(t)\in \R^{n\times r}$ be a differentiable matrix function with Taylor expansion
$Y(t_0+ h) = Y(t_0) + h \dot{Y}(t_0) + \mathcal{O}(h^2)$.
Following \cite[Proposition 2.2]{WalterLehmannLamour2012}, the QR-decomposition is characterized via the following set of matrix equations.
\[
 Y(t)  = Q(t)R(t),\quad Q^T(t)Q(t) = I_r, \quad 0 = P_L\odot R(t).
\]
In the latter, 
$P_L =
    \begin{psmallmatrix}
        0      & \cdots      &\cdots & 0\\
        1      & \ddots &  & \vdots\\
        \vdots & \ddots &\ddots &\vdots \\
        1      & \cdots &1      & 0
    \end{psmallmatrix}
$
and `$\odot$' is the element-wise matrix product so that
$P_L\odot R$ selects the strictly lower triangle of the square matrix $R$.
For brevity, we write $Y= Y(t_0),\ \dot Y = \frac{\D}{\D t}\big\vert_{t=t_0}Y(t)$, likewise for $Q(t)$, $R(t)$.
By the product rule
\[
 \dot{Y} = \dot{Q} R + Q\dot{R}, \quad 0 =\dot{Q}^TQ + Q^T\dot{Q}, \quad 0  = P_L\odot \dot{R}.
\]
According to \cite[Proposition 2.2]{WalterLehmannLamour2012}, the derivatives $\dot{Q}, \dot{R}$
can be obtained from Alg. \ref{alg:QRdiff}.
The trick is to compute $X = Q^T\dot{Q}$ first and then use this to compute
$\dot{Q} = QQ^T\dot{Q} + (I_n-QQ^T)\dot{Q}$ by exploiting that $Q^T\dot{Q}$ is skew-symmetric
and that $\dot{R}R^{-1}$ is upper triangular.
\begin{algorithm}
\caption{Differentiating the QR-decomposition, \cite[Proposition 2.2]{WalterLehmannLamour2012}}
\label{alg:QRdiff}
\begin{algorithmic}[1]
  \Require{\revcomm{M}atrices $T, \dot{T}\in\R^{n\times r}$, (compact) QR-decomposition $T=QR$.}
  \State{ $L:= P_L\odot(Q^T\dot{T}R^{-1})$}
  \State{ $X = L-L^T$} \hfill \Comment{Now, $X = Q^T\dot{Q}$}
  \State{ $\dot{R} =  Q^T\dot{T} - XR$}
  \State{ $\dot{Q} = (I_n-QQ^T)\dot{T}R^{-1} + QX$}
  \Ensure{$\dot{Q}, \dot{R}$}
\end{algorithmic}
\end{algorithm}

\paragraph{Matrix Exponential and the Principal Matrix Logarithm}
The matrix exponential and the principal matrix logarithm are defined by
\begin{equation}
 \label{eq:MatrxiExpLog}
 \exp_m(X):=\sum_{j=0}^\infty{\frac{X^j}{j!}}, \quad \log_m(I+X):=\sum_{j=1}^\infty{(-1)^{j+1}\frac{X^j}{j}}.
\end{equation}
The latter is well-defined for matrices that have no eigenvalues on $\R^-$.

\section{\revcomm{Computational Complexity}}
\revcommblock{
    For the benefit of the reader, we include  Table~\ref{tab:flops} of the floating point operation (FLOP) counts of some of the most commonly used formulas in this handbook. Note that the FLOP count of the SVD and other operations depends on the specific implementation. Furthermore, we counted $\sin(\cdot)$, $\cos(\cdot)$, $\sqrt{\cdot}$ etc. for scalars as one flop for simplicity.
\begin{table}[H]
    \begin{tabular}{l<{\hspace{-1em}}r>{\hspace{-1em}}l>{\hspace{-1em}}c}
        \toprule
        \bfseries{Operation} & \multicolumn{2}{c}{\hspace{-5em}\bfseries{Formula}} & \bfseries{FLOPS}\\
        \midrule
        Riem. metric & $g_{UU^T}^\Gr(\Delta_1,\Delta_2) =$&$\tr((\Delta^\hor_{1,U})^T\Delta^\hor_{2,U})$ & $2np^2 + p$\\
        Riem. gradient & $(\grad \bar{f})_U=$&$(I-UU^T)\grad^{\eucl} \bar{f}_U$ & $4np^2 + np$\\
        Riem. exponential & $\Exp_U^\Gr(t\Delta^\hor_U)=$&$U\tilde{V}\cos(t\tilde{\Sigma})\tilde{V}^T + \tilde{Q}\sin(t\tilde{\Sigma})\tilde{V}^T$ & $\sim 6np^2+6p^3+p$\\
        Parallel transport & $\left(\mathbb{P}_\Delta(\gamma_\Gamma(t))\right)^\hor_{U(t)}=$& \eqref{eq:ParallelTransportHorLift} & $\sim 5np^2 + 4np + p^2 + 4p$\\
        Riem. logarithm & $(\Log_{UU^T}^\Gr(YY^T))^\hor_U=$& Alg.~\ref{alg:modgrasslog} & $\sim 8np^2 + 2np + p^3 + p^2 + 2p$\\
        \bottomrule
    \end{tabular}
    \caption{\revcommblock{Floating point operation (FLOP) counts for some of the most commonly used formulas in this handbook, working with Stiefel representatives and assuming $n \gg p$.}}
    \label{tab:flops}
\end{table}
}

\section*{Acknowledgments}
This work was initiated when the first author was at UCLouvain for a research visit, hosted by the third author. 

\section*{Funding and/or Conflicts of Interest/Competing Interests}
The third author was supported by the Fonds de la Recherche Scientifique -- FNRS and the Fonds Wetenschappelijk Onderzoek -- Vlaanderen under EOS Project no 30468160.

Conflict of Interest: The authors declare that they have no conflict of interest.

\bibliographystyle{plainurl}
\bibliography{Grassmannbib}
\end{document}